\renewcommand{\epsilon}{\varepsilon}
\newcommand{\kahler}{K\"ahler }
\newcommand{\PP}{{\mathbb P}}
\newcommand{\C}{{\mathbb C}}
\newcommand{\Z}{{\mathbb Z}}
\renewcommand{\phi}{\varphi}
\newcommand{\hcal}{\mathcal{H}}
\newtheorem{thm}{Theorem}[section]
\newtheorem{theorem}{{Theorem}}[section]
\newtheorem{cor}[theorem]{{Corollary}}
\newtheorem{lem}[theorem]{{Lemma}}
\newtheorem{prop}[theorem]{{Proposition}}
\newenvironment{rem}{\medskip\noindent{\it Remark:\/} }{\medskip}
\newtheorem{conj}[thm]{Conjecture}
\newtheorem{rmk}[thm]{Remark}
\theoremstyle{definition}
\numberwithin{equation}{section}
\def \C {\mathbb C}
\def \Z {\mathbb Z}
\def \P {\mathbb P}
\def \p {\partial}
\def \bp {\bar{\partial}}
\title[projective embedding of log riemann surfaces and K-stability]{projective embedding of log riemann surfaces and K-stability }
\author{Jingzhou Sun and Song Sun}
\thanks{The second author is partially supported by NSF grant DMS-1405832  and Alfred P. Sloan fellowship.}
\address{Department of Mathematics, Shantou University, Shantou City, Guangdong Province 515063, China}
\address{Department of Mathematics, Stony Brook University, Stony Brook, NY 11794, USA}
\email{jzsun@stu.edu.cn, song.sun@stonybrook.edu}
\begin{document}

\begin{abstract}
Given a smooth polarized Riemann surface $(X, L)$ endowed with a hyperbolic metric $\omega$ that has standard cusp singularities along a divisor $D$, we show the $L^2$ projective embedding of $(X, D)$ defined by $L^k$ is asymptotically almost balanced in a weighted sense. The proof depends on sufficiently precise understanding of the behavior of the Bergman kernel in three regions, with the most crucial one being the neck region around $D$. This is the first step towards understanding the algebro-geometric stability of extremal K\"ahler metrics with singularities. 
\end{abstract}

\maketitle


\section{Introduction}
Let $(X, L)$ be an $n$ dimensional polarized K\"ahler manifold. The famous Yau-Tian-Donaldson conjecture relates the existence of constant scalar curvature K\"ahler (cscK) metrics in the class $2\pi c_1(L)$ to the K-stability of $(X, L)$. This is essentially a correspondence between differential geometry/PDE and algebraic geometry of $(X, L)$.  The direction from cscK metrics to K-stability was established by Donaldson \cite{donaldson2001}, Stoppa \cite{Stoppa}, Mabuchi \cite{Mabuchi}, using the idea of  \emph{quantization}. The other direction is much more involved, and it has been established for toric surfaces by Donaldson \cite{Donaldson2009}, and for anti-canonically polarized Fano manifolds by the recent result of Chen-Donaldson-Sun \cite{CDS1, CDS2, CDS3} (the corresponding metrics are \emph{K\"ahler-Einstein}). 

A crucial ingredient in the proof of \cite{CDS1, CDS2, CDS3} is the introduction of a smooth divisor $D\in |-mK_X|$ for some $m\geq 1$. Both aspects of the above conjecture extend naturally to the pair $(X, D)$ with an extra parameter $\beta\in [0, 1]$. On the algebraic geometric side we have a notion of \emph{logarithmic K-stability} for $(X, D, K_X^{-1}, \beta)$, and on the differential geometric side the corresponding object is a K\"ahler-Einstein metric with cone angle $2\pi\beta$ along $D$. Roughly speaking the strategy of \cite{CDS1, CDS2, CDS3} is a continuous deformation from $\beta=0$ to $\beta=1$. A simple but important fact is that the logarithmic K-stability is linear in $\beta$, and it is then evidently important to study both aspects at $\beta=0$.  On the metric side one expects complete K\"ahler-Einstein metrics on the complement $X\setminus D$, and such metrics are known to exist (\cite{ChengYau, TianYau1, Kobayashi}), by adapting Yau's solution of the Calabi conjecture and following Calabi's ansatz; on the algebraic side the K-semistability of $(X, D, K_X^{-1}, 0)$ is established by \cite{ssun}, \cite{OdakaSun}, \cite{Berman}, \cite{LiSun}. However, a direct relationship between these two facts seems missing.

Now for a general polarized manifold $(X, L)$ with a smooth divisor $D$ the above discussion can be extended in a straightforward way by replacing $K_X^{-1}$ with $L$. Such a theory has not yet been satisfactorily established. In this direction we expect the following 

\begin{conj} \label{conj1-1}
Let $(X, L)$ be a polarized K\"ahler manifold of dimension $n$, and $D$ a smooth divisor in the class $c_1(L)$.  Denote $\sigma=-(K_X+L). L^{n-1}/L^n$, and suppose $D$ admits a constant scalar curvature K\"ahler metric $\omega_D\in 2\pi c_1(L|_D)$. Then  $(X, D, L, 0)$ is logarithmic K-semistable if $\sigma\leq 0$. 
\end{conj}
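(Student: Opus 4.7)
\medskip

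\noindent\textbf{Proof proposal.} The plan is to imitate the quantization strategy of Donaldson--Stoppa--Mabuchi in the logarithmic setting, using the present paper's Bergman kernel analysis as the core analytic input. First I would produce a candidate complete K\"ahler metric $\omega$ on $X\setminus D$ in the class $2\pi c_1(L)$ with standard cusp singularities transverse to $D$, such that its scalar curvature is as close to constant as possible. The natural construction is a Calabi-type ansatz near $D$: take $\omega_D\in 2\pi c_1(L|_D)$ as the given cscK metric, pull it back to a tubular neighborhood via a smooth retraction onto $D$, and splice in the standard cusp profile $\sqrt{-1}\,dt\wedge d\bar t/(|t|\log|t|)^2$ in the normal direction, glued to any positive metric on the complement. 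The condition $\sigma=-(K_X+L)\cdot L^{n-1}/L^n\le 0$ is exactly what guarantees that the average scalar curvature of the resulting cusp metric has the right sign for one to then solve a Monge--Amp\`ere-type equation (in the spirit of Cheng--Yau/Kobayashi/Tian--Yau) and deform $\omega$ into a genuine complete cscK (or at least asymptotically cscK) metric on $X\setminus D$; in the boundary case $\sigma=0$ one may even hope for K\"ahler--Einstein on the complement.

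Second, I would study the $L^2$-Bergman kernel $B_k(z)$ of $H^0(X,L^k)$ with the inner product defined by $\omega$ and a Hermitian metric $h$ on $L$ of curvature $\omega$. Following the pattern of the present paper, the analysis must be carried out in three regions: (i) the \emph{bulk} region at bounded distance from a compact subset of $X\setminus D$, where one has the standard Tian--Zelditch--Catlin expansion $B_k\sim k^n(1+a_1(\omega)/k+\cdots)$ with $a_1$ proportional to scalar curvature; (ii) the \emph{neck} region, a shrinking tubular neighborhood of $D$ where the Calabi ansatz reduces the Bergman kernel problem to a one-dimensional cusp model fibered over $D$, for which one can use the explicit description developed in this paper; and (iii) the intermediate transition region, handled by $L^2$ H\"ormander-type estimates and cutoff constructions. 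The outcome should be an expansion in which $B_k$ is, up to lower order, a \emph{weighted} almost-constant function, the weight being the cusp density on $D$ and a smooth positive function on the bulk.

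Third, I would translate this into an asymptotic almost-balanced statement for the induced projective embedding $\Phi_k: X\dashrightarrow \P H^0(X,L^k)^*$ in the same weighted sense as in the body of the paper, and then appeal to the Donaldson--Stoppa--Mabuchi argument. For any test configuration $(\mcal X,\mcal L)$ for $(X,D,L)$, the associated generator acts on $H^0(X,L^k)$ and its normalized trace, expanded in powers of $k$, gives the logarithmic Donaldson--Futaki invariant. Plugging in the almost-balanced expansion for $B_k$ and using the weighted balanced identity, the leading term in $1/k$ yields $-\mathrm{DF}_{\log}(\mcal X,\mcal L,D,0)\ge -o(1)$, which after letting $k\to\infty$ gives $\mathrm{DF}_{\log}\ge 0$, i.e.\ log K-semistability. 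The sign $\sigma\le 0$ enters precisely here, in the comparison between the integrated almost-constant Bergman density and the Chern-class intersection number $(K_X+L)\cdot L^{n-1}$, which is what forces the Futaki invariant to have the correct sign.

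The hard step, and the one that really distinguishes this from the compact smooth case, will be step two: controlling the Bergman kernel uniformly in the neck region and obtaining a genuinely sharp weighted expansion of $B_k$ up to an error that is $o(1)$ \emph{after} integration against the (non-integrable, near-cusp) reference measure. In higher dimensions the neck is a real hypersurface fibered in cusps over the cscK divisor $(D,\omega_D)$, so one must combine the one-dimensional cusp analysis of this paper with parameter-dependent estimates varying over $D$, and ensure that the errors coming from the noncompactness of $X\setminus D$ and from holomorphic sections growing along $D$ do not overwhelm the main term. Once that estimate is in hand, the remaining pieces (existence of the model complete cscK metric, the Donaldson--Stoppa trace argument, and the comparison with log Donaldson--Futaki) should proceed along well-established lines.
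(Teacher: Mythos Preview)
The statement you are trying to prove is labeled \emph{Conjecture} in the paper, and the paper does not prove it. The authors explicitly leave the general $n$-dimensional case open; they remark that the case $\sigma=0$ follows from \cite{ssun} and the case $K_X$ proportional to $L$ from \cite{OdakaSun, Berman, LiSun}, and that the conjecture ``suggests the existence of complete K\"ahler metrics with negative constant scalar curvature on the complement $X\setminus D$, which is related to the work of H.~Auvray.'' The main theorem of the paper is the $n=1$ case only (Theorem~\ref{thm1-1} and Corollary~\ref{1.1}), where the needed complete metric is simply the classical hyperbolic metric with cusps, and the Bergman kernel analysis is carried out explicitly on the punctured-disc model.

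Your proposal is therefore not a proof of something the paper proves, but a program for attacking the open conjecture. As a program it is in the right spirit, and indeed the paper says that it hopes ``the techniques developed in this paper could help'' with exactly this. But there is a genuine gap you should be aware of: your Step~1 presupposes the existence of a complete cscK metric on $X\setminus D$ with standard cusp asymptotics along $D$. This is not known in general. The Cheng--Yau/Kobayashi/Tian--Yau results you invoke produce complete \emph{K\"ahler--Einstein} metrics, and only under strong positivity/negativity hypotheses on $K_X+D$; they do not yield cscK metrics for an arbitrary polarization $L$ with $\sigma\le 0$. Auvray's work moves in this direction but does not settle it. Without this metric, Steps~2 and~3 have nothing to quantize. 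Even granting the metric, your Step~2 in higher dimensions is a serious open problem: the neck region is no longer a single punctured disc but a disc bundle over $D$, and obtaining a uniform weighted Bergman expansion with error $o(k^{-1+n/2})$ (the threshold needed for almost-balancedness) across the bulk, neck, and transition regions is precisely the hard analysis the paper carries out only in dimension one.
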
 

Notice the sign of $\sigma$ is the same as the sign of the scalar curvature of $\omega_D$. When $\sigma=0$ Conjecture \ref{conj1-1} follows from \cite{ssun} (the proof there is written assuming $D$ is Calabi-Yau, but it is easy to see one only uses the condition that $D$ is scalar flat). When $K_X$ is proportional to $L$, Conjecture \ref{conj1-1} holds by the results of \cite{OdakaSun, Berman, LiSun}. The conjecture can also be intuitively interpreted as a form of \emph{inversion of adjunction} for K-stability, if one assumes the Yau-Tian-Donaldson conjecture holds in dimension $n-1$. It is an interesting question to ask if the algebro-geometric counterpart can be proved directly. From the differential geometric point of view the conjecture also suggests the existence of complete K\"ahler metrics with negative constant scalar curvature on the complement $X\setminus D$, which is related to the work of H. Auvray \cite{Auvray}. 

In this paper we will deal with the case $n=1$, so $X$ is a smooth Riemann surface, and $D=\sum_{i=1}^d p_i$ is an effective divisor of degree d (all $p_i$'s are distinct). We call such pair $(X, D)$ a \emph{log Riemann surface}. The condition $\sigma<0$ is equivalent to $d> \chi(X)$. Conjecture \ref{conj1-1} in this case follows from the aforementioned results. However, the proofs in \cite{OdakaSun, Berman, LiSun} all depend crucially on the special feature that the canonical bundle of $X$ is definite, so seem difficult to be adapted to the general case.  Our proof here is based on the quantization technique and reveals the relationship between logarithmic K-stability and the known complete hyperbolic metric on $X\setminus D$. We hope the techniques developed in this paper could help understand the quantization for other types of singular metrics, for examples, those with cone singularities, and lead to the proof that  existence of singular cscK/extremal metrics with prescribed asymptotic behavior implies an appropriately extended notion of K-stability. For  metrics with cone singularities or Poincar\'e type singularities along a divisor this has already  been speculated in \cite{Donaldson2010, Szekelyhidi}. 

Before stating our main result,  we recall some known facts and fix some notation.  Let $V$ be a subvariety of $\C\P^N$ and $W$ a subvariety of $V$. For $\lambda\in [0, 1]$ we define the \emph{$\lambda$-center of mass} of $(V, W)$ to be 

$$\mu(V, W, \lambda)=\lambda \int_V \frac{ZZ^*}{|Z|^2} d\mu_{FS}+(1-\lambda)\int_W \frac{ZZ^*}{|Z|^2}d\mu_{FS}-\frac{\lambda Vol(V)+(1-\lambda)Vol(W)}{N+1}Id $$
where $[Z]\in \C\P^N$ is viewed as a column vector, and the volume is calculated with respect to the induced Fubini-Study metric. Notice $\mu$ always takes value in $\sqrt{-1}Lie(SU(N+1))$; indeed, by general theory $\mu$ can be viewed as the moment map for the action of $SU(N+1;\C)$ on a certain \emph{Chow variety}. For $B\in Lie(SU(N+1))$  we write $\|B\|_2:=\sqrt{Tr BB^*}$.

A pair $(V, W)$ embedded in $\C\P^N$ with vanishing $\lambda$-center of mass is called a \emph{$\lambda$-balanced embedding}. We say $(V, W)$ is  \emph{$\lambda$-Chow stable} if there is an $A\in SL(N+1;\C)$ such that $(A.V, A.W)$ is $\lambda$-balanced.  and we say $(V, W)$ is \emph{$\lambda$-Chow semistable} if the infimum balancing energy 
$$E(V, W, \lambda):=\inf_{A\in SL(N+1;\C)} \|\mu(A. V, A. W, \lambda)\|_2$$
vanishes. It is well-known that by the Kempf-Ness theorem, these definitions agree with the usual notion of Chow (semi)-stability of log pairs (see for example \cite{LW}). When $\lambda=1$ the subvariety $W$ can be ignored and this reduces to the standard notion of Chow (semi)-stability. 

Now going back to our situation of a polarized manifold $(X, L)$ and a smooth divisor $D$. We say $(X, D, L)$ is $\lambda$-\emph{almost asymptotically Chow stable} if for $k$ sufficiently large, under the projective embedding of $(X, D)$ induced by sections of $H^0(X, L^k)$ we have $E(V, W, \lambda)=o(k^{-1+n/2})$. By \cite{ssun} if $(X, D, L)$ is $\lambda$-amost asymptotically Chow stable then $(X, D, L, \beta)$ is K-semistable for $\beta=\frac{3\lambda-2}{\lambda}.$  We will not explicitly make use of the notion of (logarithmic) K-(semi)stability in this article, so we will not elaborate on the definition and we refer the readers to \cite{ssun}.

Restricting to our setting of a log Riemann surface $(X, D)$, with an ample line bundle $L$ of degree $l$. In this one dimensional case we do not need to assume $L=[D]$. We denote by $\omega$ the complete K\"ahler metric on $X\setminus D$ with constant nagative curvature, with total volume $2l\pi$, and with standard cusp singularities at points in $D$. $\omega$ can be considered as a closed \kahler current on $X$ whose cohomology class is $2\pi c_1(L)$. So there is a singular metric $h$ on $L$ such that the curvature of $h$ is $\omega$. For $k$ large, we denote by  $\mathcal H_k$ the subspace of $H^0(X, L^k)$ consisting of holomorphic sections that are $L^2$ integrable with respect to the norm defined by $h$ and $\omega$. It is easy to see that $\mathcal H_k$ agrees with the image of the map $H^0(X, L^{k}(-D))\rightarrow H^0(X, L^k)$  given by multiplication  by the defining section $s_D$ for $D$.  For $k$ large, we have an embedding $\Phi_k: X\rightarrow\P\mathcal H_k^*$. A choice of orthonormal basis of $\mathcal H_k$ determines a Hermitian isomorphism of $\P\mathcal H_k^*$ with $\C\P^{N_k}$, unique up to the  $U(N_k+1)$ action, where $N_k+1=\dim \mathcal H_k$. In particular, the quantity $\|\mu(X, D, \lambda)\|_2$ is independent of the choice of orthonormal basis.   The following is our main result

\begin{thm} \label{thm1-1}
Given a log Riemann surface $(X, D)$ with $d> \chi(X)$, and any ample line bundle $L$ over $X$, we have for $k$ large, 
$$\|\mu(\Phi_k(X), \Phi_k(D), \frac 23)\|^2_2=O(k^{-{3/2}}(\log k)^{121}). $$
\end{thm}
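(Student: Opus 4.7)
The plan is to express $\mu(\Phi_k(X),\Phi_k(D),\tfrac{2}{3})$ in terms of an $L^2$-orthonormal basis $\{s_0,\dots,s_{N_k}\}$ of $\mathcal{H}_k$ and reduce the bound to sharp asymptotics of the associated Bergman kernel $B_k(z)=\sum_j|s_j(z)|^2_{h^k}$. Because $\mathcal{H}_k=s_D\cdot H^0(X,L^k(-D))$, the embedding $\Phi_k$ extends naturally over $D$ via $\tilde{s}_j:=s_j/s_D\in H^0(X,L^k(-D))$, and
\[
\mu_{ij}=\tfrac{2}{3}\int_X\frac{s_i\overline{s_j}}{B_k}\,\omega_{FS,k}+\tfrac{1}{3}\sum_{p\in D}\frac{\tilde{s}_i(p)\overline{\tilde{s}_j(p)}}{\tilde{B}_k(p)}-\tfrac{(2/3)\operatorname{Vol}(\Phi_k(X))+d/3}{N_k+1}\delta_{ij},
\]
where $\tilde{B}_k(p)=\sum_j|\tilde{s}_j(p)|^2$. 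It suffices to show that the trace-free Hermitian part of this matrix has Hilbert--Schmidt norm $O(k^{-3/4}(\log k)^{60})$.

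Near each $p\in D$ take a cusp coordinate $w$ with $r=-\log|w|$, and expand sections in angular Fourier modes $s_n\propto w^n e^k$ for $n\ge 1$; the pointwise density $|s_n|^2_{h^k}=c_n^2e^{-2nr}r^{2k}$ is Gaussian-concentrated at $r=k/n$ with width $\sqrt{k}/n$. Fix scales $R=(\log k)^A$ and $M=k/(\log k)^A$ for a large constant $A$, and decompose $X=X_B\sqcup X_N\sqcup X_T$ into the \emph{bulk} $\{r\le R\}\cup(\text{away from the cusps})$, the \emph{neck} $\{R\le r\le M\}$, and the \emph{tip} $\{r\ge M\}$. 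In the bulk $\omega$ is smooth and cscK, so the Tian--Yau--Zelditch--Catlin expansion gives $B_k\sim k/(2\pi)$ and $\omega_{FS,k}=k\omega+O(k^{-1})$ uniformly, and the bulk contribution to the trace-free matrix is $O(k^{-2})$ in Hilbert--Schmidt norm.

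In the neck, a uniform Laplace-method analysis of $B_k(r)=\sum_n c_n^2 e^{-2nr}r^{2k}$ extends the bulk asymptotic across $[R,M]$, and angular integration in $\theta=\arg w$ kills all off-diagonal entries of the matrix integrand in the Fourier basis. In the tip only the mode $n=1$ survives (there is no mode $n=0$ to compete against it), so the tip integral reduces essentially to a rank-one projection onto the mode-$1$ direction at each cusp, producing at each such position a diagonal entry that differs from the common bulk/neck value by a specific $O(1)$ \emph{deficit}. On the other hand, the point-sum is, in the Fourier basis, the diagonal matrix with a $1$ at each cusp-mode-$1$ position and $0$ elsewhere, since $\tilde{s}_n=c_n w^{n-1}\cdot(\text{local frame of }L^k(-D))$ vanishes at $w=0$ for $n\ge 2$ but is nonzero for $n=1$.

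The weight $\lambda=2/3$, equivalent under $\beta=(3\lambda-2)/\lambda$ to the cusp case $\beta=0$, is engineered so that $\tfrac{2}{3}\cdot(-\text{deficit})+\tfrac{1}{3}\cdot 1=0$ at each cusp-mode-$1$ entry, producing exact cancellation of the leading discrepancy. Summing the three regions and the point-sum and subtracting the trace term, what remains is the accumulated Bergman-expansion error, bounded by $O(k^{-3/2}(\log k)^{121})$ in squared Hilbert--Schmidt norm; the polylogarithmic exponent records losses from iterated Laplace-type estimates. The main technical obstacle is the neck Bergman asymptotic: producing an expansion for $B_k(r)$ uniformly valid across $r\in[R,M]$ with an error sharp enough to yield the $k^{-3/2}$ rate, together with controlling the residual off-diagonal Fourier couplings to the same order. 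The bulk TYZ expansion, the tip single-mode reduction, and the algebraic cancellation at $\lambda=2/3$ are comparatively routine once the neck is controlled.
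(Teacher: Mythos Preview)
Your outline follows the paper's strategy closely and you correctly single out the neck as the crux, but two of your claimed steps are not right as stated and conceal the real work.

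First, the assertion that in the neck ``a uniform Laplace-method analysis of $B_k(r)$ extends the bulk asymptotic across $[R,M]$'' is false. The TYZ expansion $\rho_k\sim k/(2\pi)$ holds only where the injectivity radius exceeds $k^{-1/2}\log k$, i.e.\ for $r\lesssim k^{1/2}/\log k$, far short of your $M=k/(\log k)^A$. In the genuine neck (the paper's Case~III, $a\in[k^{1/2}/\log k,\,k^{1/2}\log k]$) the paper does \emph{not} extend the bulk expansion of $\rho_k$; it works instead with the integral $\mu_a=\int(\log f_a)''/f_a$ for $f_a(u)=\sum_c e^{k(\log(1+c/a)-c/a)-cu}$, replaces $f_a$ by the theta-like sum $h_a(u)=\sum_{c\in\Z}e^{-kc^2/(2a^2)-cu}$, and invokes the exact identity $\int_{-\infty}^{\infty}h_a''/h_a^2\,du=2$, proved by differentiating the closed-form evaluation $\int 1/h_b=2b$ in the heat-equation parameter $b$. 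This identity is what forces $\mu_a=1+O(k^{-1}(\log k)^{60})$ and hence the $k^{-3/2}$ rate; a Laplace expansion of $B_k$ alone does not produce this cancellation.

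Second, in the tip it is not true that ``only the mode $n=1$ survives'': every mode with $n\lesssim(\log k)^A$ concentrates there. The paper's Case~II treats these via a ladder of annuli $[a_n^{-k},b_n^{-k}]$, $[b_n^{-k},a_{n+1}^{-k}]$ on each of which $\phi_k$ and $\psi_k$ are dominated by two or three explicit monomials, and then evaluates each $\mu_a$ by integrating the resulting rational functions. This is what gives $\mu_a=1+\epsilon(k)$ for $a\geq 2$ and the specific value $\mu_1=\tfrac12+\epsilon(k)$ (the deficit you need for the $\lambda=\tfrac23$ cancellation); neither drops out of a single-mode reduction.

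The remaining ingredients of your sketch --- TYZ in the bulk, angular orthogonality killing off-diagonal terms in the model, the $\lambda=\tfrac23$ algebra at the mode-$1$ entries, and H\"ormander gluing to pass from the punctured-disc model to the global surface --- match the paper.
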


We remark here that the exponent $121$ is far from being optimal, and it can certainly be improved when needed. We can \emph{roughly} say $(X, D, L)$ is $\frac{2}{3}$-almost asymptotically Chow stable in the sense of above definition. This is not precisely true since our embedding using $L^2$ sections is not induced by the complete linear system $|L^k|$. However, one can always construct from this an almost balanced embedding induced by the complete linear system. Fix a splitting $H^0(X, L^k)^*=\mathcal H_k^*\bigoplus \oplus_{p\in D} \C_{p}$, where $\C_p$ is the one dimensional subspace of $H^0(X, L^k)^*$ defined by evaluating a section at $p$.  Then we extend the $L^2$ metric on $\mathcal H_k$ to a Hermitian metric on $H^0(X, L^k)$ such that the different pieces are orthogonal. Now we define a pair of cycles $(X', D')$ in $\P(H^0(X, L^k)^*)$, where $D'$ is the union of points corresponding to the one dimensional subspaces $\C_p$, and $X'$ is the union of the image of $\Phi_k(X)$, together with all the lines that connect the image of a point $p\in D$ in $\P\mathcal H_k^*$ and the corresponding point in $D'$. This cycle  is in the closure of the $PGL$ orbit of the pair $(X, D)$ embedded by the linear system $|L^k|$; it is indeed given by deformation to the normal cone, see analogous discussion in Section 4). Since the center of mass of a line is easy to compute, it is then not hard to check that $(X', D')$ is indeed $\frac{2}{3}$-almost balanced, which implies $(X, D)$ also admits a $\frac{2}{3}$-almost balanced embedding. 

As an immediate corollary, using \cite{ssun}, is that 

\begin{cor}\label{1.1}
$(X, D, L, 0)$ is logarithmic K-semistable.
\end{cor}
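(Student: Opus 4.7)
The plan is to chain together Theorem~\ref{thm1-1} with the general implication from almost balanced embeddings to logarithmic K-semistability established in \cite{ssun}. First I would unpack the bound: Theorem~\ref{thm1-1} gives $\|\mu(\Phi_k(X),\Phi_k(D),\tfrac{2}{3})\|_2 = O(k^{-3/4}(\log k)^{60.5})$, and since $E(V,W,\lambda)\leq \|\mu(V,W,\lambda)\|_2$ by definition (take $A=\mathrm{Id}$), we get $E = O(k^{-3/4}(\log k)^{60.5})$. In dimension $n=1$ the threshold for $\lambda$-almost asymptotic Chow stability is $o(k^{-1+n/2})=o(k^{-1/2})$, and indeed $k^{-3/4}(\log k)^{60.5}=o(k^{-1/2})$, so the decay is sufficiently fast.

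The one subtlety, which the introduction already flags, is that $\Phi_k$ is induced by the $L^2$ subspace $\mathcal{H}_k \subset H^0(X,L^k)$ rather than by the full linear system $|L^k|$, whereas the algebro-geometric notion of Chow stability is formulated for the complete linear system. To bridge this gap I would invoke the auxiliary pair $(X',D')\subset \P(H^0(X,L^k)^*)$ constructed in the excerpt via deformation to the normal cone: extend the $L^2$ inner product on $\mathcal{H}_k$ to $H^0(X,L^k)$ by declaring the evaluation lines $\C_p$ (for $p\in D$) orthogonal to $\mathcal{H}_k^*$, and replace $\Phi_k(X)$ by its union with the $|D|$ lines joining $\Phi_k(p)$ to the point of $D'$ corresponding to $\C_p$. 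A direct computation of the center of mass of a projective line (which is explicit and diagonal in the chosen splitting) shows that the contributions of the attached lines exactly balance the $\lambda=\tfrac{2}{3}$ weighting of $\Phi_k(D)$ against $\Phi_k(X)$, so $(X',D')$ is $\tfrac{2}{3}$-almost balanced with error controlled by that of Theorem~\ref{thm1-1}. Since $(X',D')$ lies in the closure of the $PGL$-orbit of the embedding of $(X,D)$ by the full system $|L^k|$, the balancing energy $E$ of the full-system embedding enjoys the same decay.

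Finally I would quote the main algebro-geometric result of \cite{ssun}: if $(X,D,L)$ admits embeddings by $|L^k|$ with $\lambda$-balancing energy $o(k^{-1+n/2})$ for all large $k$, then $(X,D,L,\beta)$ is logarithmic K-semistable for $\beta=\frac{3\lambda-2}{\lambda}$. Substituting $\lambda=\tfrac{2}{3}$ yields $\beta=0$, which is exactly the statement of Corollary~\ref{1.1}.

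The only step with any real content beyond citation is the deformation-to-normal-cone construction and the verification that it preserves the $\tfrac{2}{3}$-almost balanced condition; this is not hard but does require carefully tracking the orthogonal splitting and the factor $\lambda=\tfrac{2}{3}$ so that the line contributions cancel to the same order $o(k^{-1/2})$ as in Theorem~\ref{thm1-1}. Everything else is a direct application of the prior theorem and \cite{ssun}.
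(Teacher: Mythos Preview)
Your proposal is correct and follows essentially the same approach as the paper: the paper treats Corollary~\ref{1.1} as an immediate consequence of Theorem~\ref{thm1-1} together with \cite{ssun}, with the passage from the $L^2$ embedding to the full linear system handled exactly via the deformation-to-normal-cone construction of $(X',D')$ that you describe (this is spelled out in the paragraph preceding the corollary rather than in a formal proof). Your computation $\beta=\frac{3\lambda-2}{\lambda}\big|_{\lambda=2/3}=0$ and the check that $k^{-3/4}(\log k)^{60.5}=o(k^{-1/2})$ are precisely what is needed.
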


\begin{rem}
We mention that corollary \ref{1.1} were also proved in \cite{LW}, using explicit Hilbert-Mumford criterion and the special feature in complex dimension one. As mentioned above, the main interest in our paper is indeed the quantitative estimate of the balancing energy of the $L^2$ embedding induced by the hyperbolic metric. We hope this will have applications in higher dimensions.
\end{rem}

Now we briefly describe the idea involved in  the proof of Theorem \ref{thm1-1}.  Let $\{s_\alpha\}$ be an orthonormal basis of $\mathcal H_k$. An important quantity is the ``\emph{density of state function}" (or the \emph{Bergman kernel function}) 
$$\rho_k=\sum_i|s_\alpha|^2_h. $$
 Denote $\omega_k=\Phi_k^*\omega_{FS}$ (here our convention is that $\omega_{FS}\in c_1(O(1))$, then 
 $$2\pi \omega_k= k\omega+i\p\bp \log \rho_k. $$
 We know by definition 
\begin{equation} \label{eqn1-1}
\int_X \langle s_\alpha, s_\beta\rangle_h \omega=\delta_{\alpha\beta}, 
\end{equation}
and we can write 
\begin{equation} \label{eqn1-2}
\mu(X, D, \lambda)=\lambda \int_X  \langle s_\alpha, s_\beta \rangle_h \rho_k^{-1}\omega_k+(1-\lambda)\sum_\alpha \rho_k(p_i)^{-1} \langle s_\alpha(p_i), s_\beta(p_i)\rangle_h-c_k I,
\end{equation}
where $c_k=\frac{\lambda kl+(1-2\lambda )d}{N_k+1}$. 
Since $D$ consists of finitely many points, the key is to understand the first term of (\ref{eqn1-2}). Compared with (\ref{eqn1-1}),  it is then important to know the bahavior of $\rho_k^{-1}\omega_k$. Not surprisingly, as in the case without divisor, we need to study the function $\rho_k$. 

If $\omega$ were a smooth K\"ahler metric on $X$, it would follow from the result of Tian, Zelditch, Lu\cite{Tian1990On, Zelditch2000Szego, Lu2000On, Catlin, MM}, that $\rho_k$ has an asymptotic expansion of the form

\begin{equation} \label{eqn1-3}
\rho_k=\frac{1}{2\pi}[k+\frac{S(\omega)}{2}+O(k^{-1})], 
\end{equation}

Now as observed in \cite{Donaldson15} this result can be localized. The basic point is that for any $p\in X$ away from $D$, we have 
$$\rho_k(p)=\sup\{|s(p)|^2|s\in \mathcal H_k, \|s\|=1\}. $$
 and the supreme is achieved by a so-called \emph{peak section}. When $k$ is sufficiently large, the rescaled manifold $(X, p, L^k, h^{\otimes k}, k\omega)$ is close to the standard Gaussian model $(\C, 0, L_0, h_0, \omega_0)$, where $L_0$ is the trivial line bundle over $\C$, $h_0$ is the (non-trivial) hermitian metric $e^{-|z|^2/2}$ whose curvature $\omega_0$ is the standard flat metric. This fact allows a construction of the peak section at $p$ by a grafting and perturbation procedure. Everything is local in $p$ except the perturbation involves H\"ormander's $L^2$ estimate which depends on the global lower bound of Ricci curvature (this is automatically satisfied in our case). A more careful analysis shows that the expansion (\ref{eqn1-3}) indeed holds for points $p$ whose injectivity radius is bounded below by $k^{-1/2}\log k$. 

The new feature arises when we want to understand $\rho_k$ at the points with small injectivity radius, i.e. points very close to $D$ (in the standard topology on $X$). A difficult point here is that $D$ has co-dimension one. If $D$ were of higher co-dimension, then by the result of \cite{Donaldson15} one can ignore a neighborhood of $D$ and obtain a better estimate than that is stated in Theorem \ref{thm1-1}.

 Notice that $\rho_k$ goes to zero near $p_i$, so we can not expect the same expansion as (\ref{eqn1-3}) to hold. Instead we need to look at a different model, which is the punctured hyperbolic disc $\mathbb D^*$. In Section 2 we will analyze the behaviors of $\rho_k$ and $\rho_k^{-1}\omega_k$ in the model case. As our investigation shows, there are also two further distinct behavior according to the size of the injectivity radius. For a point $p$ with injectivity radius smaller than $k^{-1/2}(\log k)^{-1}$ we show that the function $\rho_k$ is essentially governed by at most three monomial sections (so we can intuitively think of these as sections ``peaked" around a circle instead of at one point); for a point with injectivity radius between $[k^{-1/2}(\log k)^{-1}, k^{-1/2}\log k]$, there are infinitely many monomial sections contributing to $\rho_k$, and we need to do a much more careful analysis to get the required estimates. 
 
 In Section 3 we will use the results of Section 2 to prove Theorem \ref{thm1-1}. In Section 4, we will study for the case $X=\P^1$ and $L=O(d)$, the exact range of $\lambda\in [0, 1]$ for which $(X, D)$ is $\lambda$-Chow stable under the embedding induced by $L^k$.  The key point is that for the minimum such $\lambda$, which we denote $\lambda_k$, we need to construct a degeneration of $(X, D)$ to a $\lambda_k$-balanced pair $(X_0, D_0)$.  We will show that $\lambda_k<2/3$ and prove the existence of such pair. It turns out  that the degeneration is exactly given by deformation to the normal cone of $D$, so $X_0$ consists of $d+1$ components, one isomorphic to $X$, and the other are $d$ lines.  This contrasts the case considered in \cite{ssun} (see also Figure \ref{fig1} and Figure \ref{fig2}), when $\sigma=0$ (in the one dimensional case, this means $X=\P^1$ and $D$ consists of two points). In that case the limiting balanced pair consists of a chain $X_0$ of $k$ lines in $\P^k$, so the number of components goes to infinity as $k$ tends to infinity. One would expect the same picture to also hold in higher dimension. This difference should also reflect the interesting facts that the complete negative K\"ahler-Einstein metrics constructed in \cite{ChengYau, TianYau1, Kobayashi} has finite volume, while the Tian-Yau complete Ricci-flat metric constructed in \cite{TianYau2} has infinite volume. 
 
 The draft of this paper was finished around November 2015. Just before the first version of this paper was posted in arXiv.org we were informed of the paper by Auvray-Ma-Marinescu \cite{AMM}, which studies the Bergman kernels on punctured Riemann surfaces. There are also many results in the literature studying the asymptotics of Bergman kernels of singular K\"ahler metrics, see for example  \cite{LL, RT, DLM}. Our paper has different motivation from these and for our geometric purpose we need more refined information of the Bergman kernel than the other works quoted above.

\

\textbf{Acknowledgements.}  We would like to thank Professor Simon Donaldson for insightful discussions regarding quantization of K\"ahler metrics over long time, and we are grateful to Professors Xiuxiong Chen, Dror Varolin and Bin Xu for their interest in this result. This project started after the talk by the second author in the  workshop ``Quantum Geometry, Stochastic Geometry, Random Geometry, you name it" in the Simons Center in June 2015, and he thanks Steve Zelditch for the invitation. The first author would also like to thank Professor Xiuxiong Chen for the hospitality while his stay in USTC, and he is always grateful to Professor Bernard Shiffman for his continuous and unconditional support.

\section{Calculation on the model}  
Throughout this paper we will denote by $\epsilon (k)$ a quantity depending on $k$ that is $O(k^{-m})$ as $k\rightarrow\infty$, for all $m\geq0$. 
Recall that our model is the punctured disk $\mathbb D^*=\{z\in \C||z|\leq 1\}$, endowed with the K\"ahler metric 
 \begin{equation} \label{eqn2-1}
 \omega_0=\frac{i dz\wedge d\bar{z}}{|z|^2(\log \frac{1}{|z|^2})^2}.
 \end{equation}
The corresponding K\"ahler potential is $\Phi_0=-\log \log \frac{1}{|z|^2}$, and the scalar curvature of $\omega_0$ is $-2$. For $k\geq 1$, we let $\mathcal H_{k, 0}$ be the Bergman space of holomorphic functions $f$ on $\mathbb D^*$ such that 
$$\|f\|_k^2:=\int_{\mathbb D^*} |f|^2 e^{-k\Phi_0}\omega_0<\infty. $$
On $\mathcal H_{k, 0}$ we denote by $\langle\cdot, \cdot\rangle_k$ the corresponding Hermitian inner product.

\begin{lem}\label{integral} For any $a\geq 1$, we have $z^a\in \mathcal H_{k, 0}$ and 
\begin{equation} \label{eqn2-2}
\langle z^a, z^b\rangle_k=\frac{2\pi(k-2)!}{ a^{k-1}}\delta_{ab}. 
\end{equation}
In particular, the functions $\{(\frac{ a^{k-1}}{2\pi(k-2)!})^{1/2}z^a|a\geq 1\}$  form an orthonormal basis of $\hcal_{k,0}$
\end{lem}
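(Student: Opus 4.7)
The proof is a direct computation. The plan is to unwind all the definitions, pass to polar coordinates, and reduce the radial integral to a standard Gamma function evaluation.

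First I would substitute the formulas for the Kähler potential $\Phi_0=-\log\log\frac{1}{|z|^2}$ and the volume form $\omega_0$ into the weighted $L^2$ inner product. Since $e^{-k\Phi_0}=(\log\tfrac{1}{|z|^2})^k$, the weight combines with the $(\log\tfrac{1}{|z|^2})^{-2}$ from $\omega_0$ to produce $(\log\tfrac{1}{|z|^2})^{k-2}$, and
$$\langle z^a, z^b\rangle_k \;=\; \int_{\mathbb D^*} z^a\bar z^b\,\bigl(\log\tfrac{1}{|z|^2}\bigr)^{k-2}\,\frac{i\,dz\wedge d\bar z}{|z|^2}.$$

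Next, write $z=re^{i\theta}$, so that $i\,dz\wedge d\bar z/|z|^2 = (2/r)\,dr\,d\theta$ and $z^a\bar z^b = r^{a+b}e^{i(a-b)\theta}$. The $\theta$-integral gives $2\pi\delta_{ab}$, which is already the orthogonality claim. Integrability of $z^a$ for $a\ge 1$ follows since the radial integrand behaves like $r^{2a-1}(\log\tfrac{1}{r})^{k-2}$ near $r=0$, which is locally integrable.

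For the case $a=b$, the remaining radial integral is
$$\langle z^a, z^a\rangle_k \;=\; 4\pi\int_0^1 r^{2a-1}\bigl(\log\tfrac{1}{r^2}\bigr)^{k-2}\,dr.$$
The substitution $u=\log\tfrac{1}{r^2}$ (so $du=-2\,dr/r$ and $r^{2a}=e^{-au}$) converts this into
$$2\pi\int_0^\infty u^{k-2}e^{-au}\,du \;=\; \frac{2\pi(k-2)!}{a^{k-1}},$$
which is the desired identity. The orthonormal basis statement then follows by normalizing.

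There is no real obstacle here; the only thing to be slightly careful about is the bookkeeping of factors of $2$ coming from the Jacobian and from the $\log\tfrac{1}{|z|^2}=-2\log|z|$ substitution, and the convergence at $r=0$ for the endpoint case $a=1$ (which is fine since $(\log\tfrac{1}{r})^{k-2}$ is integrable against $dr/r^{1-2a}$ only for $a\ge 1$, consistent with the statement's hypothesis).
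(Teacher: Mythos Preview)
Your proof is correct and follows essentially the same route as the paper: exploit the $S^1$ symmetry for orthogonality, then reduce the radial integral to a Gamma integral via a logarithmic substitution. The only cosmetic difference is that the paper uses $x=|z|^2$ as the radial variable (and then $t=-\log x$) rather than $r$ and $u=\log(1/r^2)$, but the computation is identical.
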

\begin{proof}
First of all, it is easy to see that $z^a$ is $L^2$ integrable with respect to the given weight if and only if $a\geq 1$.
By the $S^1$ symmetry of the metric and the weight,  $z^a$'s are obviously orthogonal to each other. Now we calculate the norms:
\begin{eqnarray*}
\|z^a\|^2_{k}&=&\int_{\mathbb D^*}|z|^{2a}(\log \frac{1}{|z|^2})^k\frac{idzd\bar{z}}{|z|^2(\log \frac{1}{|z|^2})^2}=\int_{\mathbb D^*}|z|^{2(a-1)}(\log \frac{1}{|z|^2})^{k-2}idzd\bar{z}\\
&=&2\pi \int_0^1x^{a-1}(-\log x)^{k-2}dx
\end{eqnarray*}
Using the substitution $t=-\log x$, we get $$\|z^a\|^2_{k}=2\pi\int _0^{\infty}t^{k-2}e^{-at}dt=\frac{2\pi(k-2)!}{ a^{k-1}}$$
\end{proof}

\begin{rem}
The calculation above actually shows us more. By the substitution $t=(k-2)y$, we get that 
 $$\int _0^{\infty}t^{k-2}e^{-at}dt=(k-2)^{k-1}\int_0^{\infty}e^{(k-2)(\log y-ay)}dy.$$
  So Laplace's method tells that for large $k$ the integral is concentrated in a small neighborhood of $t=\frac{k-2}{a}$, i.e. $|z|^2=e^{-(k-2)/a}$ (see figure \ref{fig0}). Moreover, the concentration is within a neighborhood of radius $\frac{k^{1/2}\log k}{a}$ of $t=\frac{k-2}{a}$, i.e. 
  \begin{equation} \label{eqn2-2-2}
  \int_{|t-\frac{k-2}{a}|\leq \frac{k^{1/2}\log k}{a}} t^{k-2}e^{-at}dt\geq (1-\epsilon(k)) \int _0^{\infty}t^{k-2}e^{-at}dt. 
  \end{equation}
  Here the error term $\epsilon(k)$ is, to be more precise, less than $k^{-k/3}$, which is independent of $a$. 
  \begin{figure}
  	\begin{center}
  		\includegraphics[width=0.8 \columnwidth]{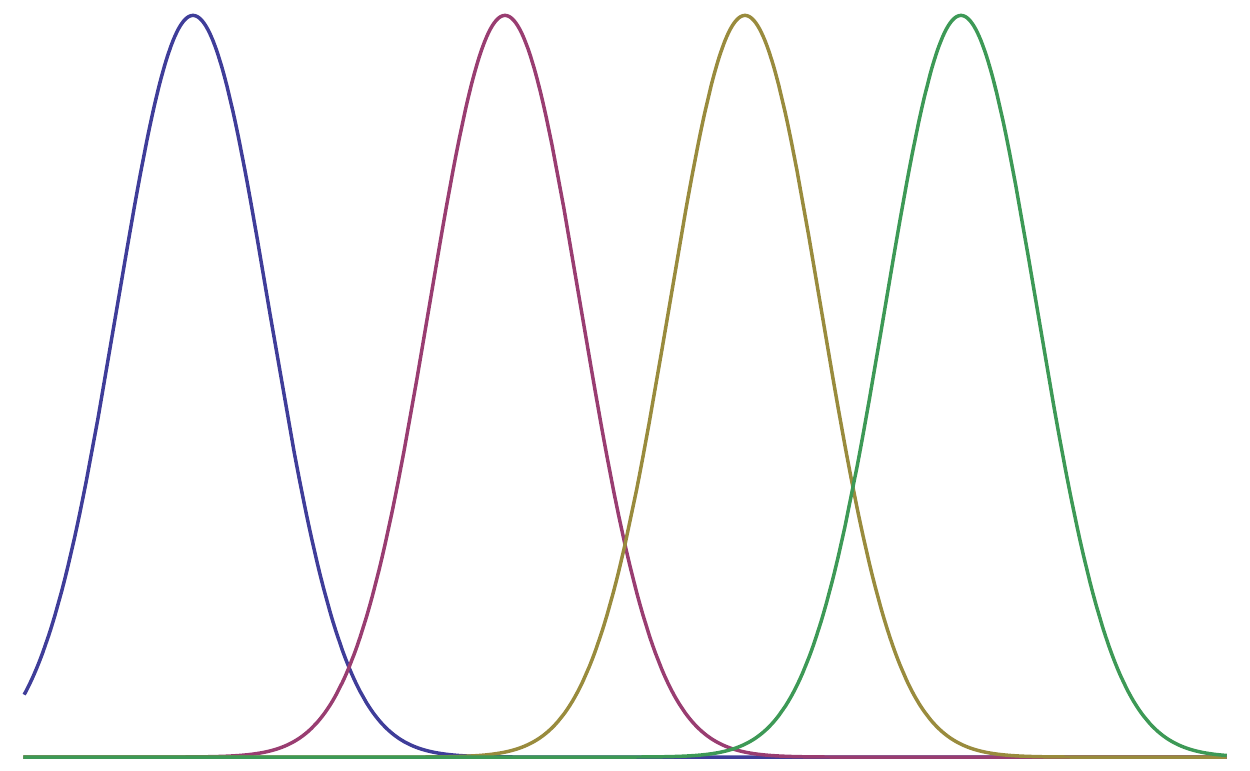}
  		\caption{Mass Concentration}
  		\label{fig0}
  	\end{center}
  \end{figure}
\end{rem}

From the Lemma above it follows that the Bergman kernel of $\hcal_{k,0}$ is given by 
\begin{equation} \label{eqn2-3}
\rho_{k,0}=\frac{(\log 1/|z|^2)^k}{2\pi(k-2)!}\sum_{a=1}^{\infty}a^{k-1}|z|^{2a}.
\end{equation}
By the preceding remark, we see that near the origin, only those terms of small degrees matter. So we can heuristically view $\rho_{k, 0}$ as a polynomial function in $|z|^2$. Formally the above orthonormal basis of $\mathcal H_{k, 0}$ induces an embedding of $\mathbb D^*$ into an infinite dimensional complex projective space, and the pull-back of the Fubini-Study metric is given by
 
\begin{equation}\label{eqn2-4}
\omega_{k, 0}:=\frac{1}{2\pi}(k\omega_0+i\p\bp \log \rho_{k, 0})=\frac{1}{2\pi}i\partial\bar{\partial}\log \sum _{a=1}^{\infty}a^{k-1}|z|^{2(a-1)}. 
\end{equation}
 
Our main goal in this section is to understand $\rho_{k, 0}$ and $\omega_{k, 0}$. This serves as a local model for understanding the Bergman kernel and the induced Fubini-Study metric near a hyperbolic cusp in our setup described in the introduction.

To simplify notations we will denote $x=|z|^2$, and we will shift $k$ by $1$ (so we are studying $\hcal_{k+1, 0}$ instead). 
We write
$$\phi_k(x)=\sum_{a=1}^{\infty}a^kx^{a-1}, $$
then 
$\omega_{k+1}(x)=\frac{1}{2\pi}\phi_k^{-2}\psi_k idz\wedge\bar z, $ where  
$$\psi_k=\phi_k \Delta_z\phi_k-|\p_z\phi_k|^2 $$
The integral in the model case corresponding to the one we are interested in (\ref{eqn1-2}) is the following
\begin{equation} \label{eqn2-5}
\mu_a:=\int_{\mathbb D^*} (\frac{|z^a|}{\|z^a\|_{k+1}})^2 e^{-(k+1)\Phi_0}\rho_{k+1, 0}^{-1} \omega_{k+1}=\int_0^1 x^{a-1}a^k\phi_k(x)^{-3}\psi_k(x)dx. 
\end{equation}

Similarly to the compact case, to measure the deviation of the image of $\mathbb D^*$ in the infinite dimensional projective space from being $\frac{2}{3}$-balanced, we need to estimate $\frac{2}{3}\mu_a+\frac{1}{3}\delta_{a1}$.   We divide into three cases

\

\textbf{Case I}: $a\geq k^{1/2}\log k$. In this case by the remark above $z^a$ is concentrated around the points where $|z|^2$ approximately $e^{-k^{1/2}(\log k)^{-1}}$. The injectivity radius of the metric $\omega_0$ at these points is approximately $\pi(\log\frac{1}{|z|^2})^{-1}\approx  \pi k^{-1/2}\log k $. Then as mentioned in the introduction, when $|z|^2 \geq e^{-k^{1/2}(\log k)^{-1}}$,  the usual proof of the Bergman kernel expansion (c.f. \cite{Donaldson15}) goes through, and provides an uniform estimate
\begin{equation} \label{usualexpansion}
\rho_{k, 0}=\frac{1}{2\pi} (k-1+O(k^{-1})), 
\end{equation}
which holds in the $C^2$ norm. This implies 
 $$\omega_{k, 0}=\omega_0(1+O(k^{-2})),  $$
So we obtain that $\mu_a=1+O(k^{-2})$ for $a\geq k^{1/2}\log k$. 
Moreover,  this argument also gives rise the following estimate of the volume of $\omega_{k, 0}$. 

\begin{lem} \label{lem2-2}
\begin{equation}
\int_{|z|^2\leq e^{-k^{1/2}(\log k)^{-1}}} \omega_{k, 0}=O(k^{-1/2}\log k). 
\end{equation}
\end{lem}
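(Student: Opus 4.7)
The plan is to reduce the area integral to a boundary computation by Stokes' theorem. Setting $\phi_{k-1}(x):=\sum_{a\ge 1}a^{k-1}x^{a-1}$, the form $\omega_{k,0}$ agrees with $\frac{1}{2\pi}\,i\partial\bar\partial\log\phi_{k-1}(|z|^2)$ and extends smoothly across $z=0$ (the $z^a$ embedding sends the puncture to the vertex $[1:0:0:\cdots]$; equivalently, the $\log|z|^2$ singularity of $k\Phi_0+\log\rho_{k,0}$ is cancelled by the $|z|^2$ factor inside $\rho_{k,0}$). Applying Stokes on $V=\{|z|^2\le R\}$ with $R=e^{-k^{1/2}(\log k)^{-1}}$ gives
$$\int_V\omega_{k,0}=\frac{1}{2\pi}\int_{|z|^2=R}i\bar\partial\log\phi_{k-1}(|z|^2)=\frac{R\,\phi'_{k-1}(R)}{\phi_{k-1}(R)},$$
and the right-hand side is precisely the weighted mean of $a-1$ against the weights $w_a:=a^{k-1}R^{a-1}$.

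The next step is to estimate this mean by the same Laplace/concentration argument used in the remark following Lemma~\ref{integral}. Maximizing $\log w_a=(k-1)\log a-(a-1)\log(1/R)$ in $a$ locates the peak at $a_\ast=(k-1)/\log(1/R)$, with Gaussian width $\sigma\sim\sqrt{k-1}/\log(1/R)$, which is much smaller than $a_\ast$. The tail estimate~(\ref{eqn2-2-2}) (which is uniform in the parameter $a$) then applies directly, and the weighted mean is controlled by $a_\ast-1$ up to a lower-order correction; plugging in $\log(1/R)=k^{1/2}/\log k$ yields the claimed order.

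A parallel route, and the one that seems to be indicated by the preceding Bergman-expansion discussion, is via the splitting $2\pi\omega_{k,0}=k\omega_0+i\partial\bar\partial\log\rho_{k,0}$. The first term integrates to $k/\log(1/R)$ via the elementary cusp-volume identity $\int_V\omega_0=2\pi/\log(1/R)$, obtained from the substitution $u=\log(1/|z|^2)$. The second term, treated by Stokes on $V\setminus B_\varepsilon$ followed by letting $\varepsilon\to 0$, reduces to a boundary integral on $|z|^2=R$ evaluated by the explicit formula~(\ref{eqn2-3}) for $\rho_{k,0}$, together with a $2\pi$ contribution extracted from the logarithmic singularity of $\log\rho_{k,0}$ at the puncture. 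This recovers the same weighted mean as above, so the two approaches agree. The main technical obstacle in either route is to make the Laplace approximation sharp enough that both the leading term and the Gaussian tail corrections fit inside the stated error; fortunately the tail bound~(\ref{eqn2-2-2}) is already uniform in the parameters that appear, so no fundamentally new input is needed.
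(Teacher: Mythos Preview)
Your first route (Stokes applied directly to the smooth potential $\log\phi_{k-1}(|z|^2)$) is correct and in fact cleaner than the paper's argument, because it sidesteps the singularity of $\log\rho_{k,0}$ at the puncture. The paper instead uses the splitting $2\pi\omega_{k,0}=k\omega_0+i\partial\bar\partial\log\rho_{k,0}$, computes $\int_V\omega_0$ directly, and handles the second piece by integration by parts together with the $C^2$ expansion of $\rho_{k,0}$ on the outer circle. Your second ``parallel route'' is a more careful rendering of this: you correctly flag the inner-boundary contribution coming from the $\log|z|^2$ singularity of $\log\rho_{k,0}$, which the paper's write-up glosses over.

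There is, however, a genuine arithmetic slip at the very end of your argument, and it is worth noticing because it exposes a typo in the lemma itself. With $\log(1/R)=k^{1/2}(\log k)^{-1}$ the peak sits at
\[
a_\ast=\frac{k-1}{\log(1/R)}\;\approx\;(k-1)\,k^{-1/2}\log k\;=\;O\!\bigl(k^{1/2}\log k\bigr),
\]
not $O(k^{-1/2}\log k)$. So what your computation actually establishes is
\[
\int_{|z|^2\le R}\omega_{k,0}=\frac{R\,\phi'_{k-1}(R)}{\phi_{k-1}(R)}=O\!\bigl(k^{1/2}\log k\bigr).
\]
This is also what the paper's own proof yields once one multiplies $\int_V\omega_0\sim k^{-1/2}\log k$ by the prefactor $k/2\pi$; the stated exponent $-1/2$ in the lemma is evidently a misprint for $+1/2$. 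For the applications in the paper (after Proposition~\ref{goodrem} and in the proof of the lemma replacing $f_a$ by its truncation) this is harmless: the bound is only ever used as a polynomial factor multiplied against an $\epsilon(k)$ integrand, so $O(k^{1/2}\log k)$ serves equally well.
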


\begin{proof}
By definition $\omega_{k, 0}=\frac{1}{2\pi} k\omega_0+\frac{1}{2\pi}i\p\bp \log \rho_{k, 0}$. A direct calculation shows, 
$$\int_{|z|^2\leq e^{-k^{1/2}(\log k)^{-1}}} \omega_0=k^{-1/2}\log k. $$
For the other term, using integration by parts and the above expansion of $\rho_{k, 0}$, we have
$$|\int_{|z|^2\leq e^{-k^{1/2}(\log k)^{-1}}}  i\p\bp \log \rho_{k, 0}|\leq |\int_{|z|^2=e^{-k^{1/2}(\log k)^{-1}}} Jd\rho_{k, 0}|=O(k^{-2}\log k). $$
\end{proof}

\

\textbf{Case II}: $a=o(k^{1/2}(\log k)^{-1/2})$. In this case the sections $z^a$ are concentrated in a very small annular neighborhood of $0$. We have
\begin{eqnarray*}
\psi_k(x)
&=&\sum_{a=1,b=2}a^kb^k(b-1)^2x^{a+b-3}-\sum_{a=2,b=2}a^kb^k(a-1)(b-1)x^{a+b-3}\\
&=&\sum_{b=2}b^k(b-1)^2x^{b-2}+\sum_{a=2,b=2}a^kb^k(b-1)(b-a)x^{a+b-3}\\
&=&\sum_{l=3}c_lx^{l-3}, 
\end{eqnarray*}

where \begin{eqnarray*}
c_l&=&\sum_{a+b=l,a\geq 2, b\geq 2}a^kb^k(b-1)(b-a)+(l-2)^2(l-1)^k\\&=&\sum_{a+b=l,a\geq 1, b\geq 2}a^kb^k(b-1)(b-a)\\&=&\sum_{a+b=l,a\geq 1, a<b}a^kb^k(b-a)^2
\end{eqnarray*} 

As power series, $\phi_k$ and $\psi_k$ are complex for integrals. Our first observation is that $\phi_k(x)$ can be estimated using only 2 or 3 terms when $x$ is small. More precisely:

\begin{lem}\label{l1}
When $x\in [0, 2^{-k}]$, $\phi_k(x)=1+2^kx+\epsilon(k)$. Similarly, for $x\in [\frac{n+1}{n})^{-k}, (\frac{n+2}{n+1})^{-k}]$, we have 
$$\phi_k(x)=(1+\epsilon(k))(n^kx^{n-1}+(n+1)^kx^n+(n+2)^kx^{n+1})$$
 as long as $n^2=o(\frac{k}{\log k})$.
\end{lem}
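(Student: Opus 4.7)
Writing $\phi_k(x)=\sum_{a\geq 1} T_a(x)$ with $T_a(x):=a^k x^{a-1}$, the plan is to pin down the position of the peak via the elementary ratio identity
$$\frac{T_{a+1}(x)}{T_a(x)}=\left(\frac{a+1}{a}\right)^{k} x,$$
which crosses the value $1$ exactly at $x=((a+1)/a)^{-k}$. Consequently, on the interval $[((n+1)/n)^{-k},\ ((n+2)/(n+1))^{-k}]$ the dominant term is $T_{n+1}$, while $T_n$ and $T_{n+2}$ are comparable to it at the left and right endpoints respectively. The proof reduces to showing that the head $\sum_{a\leq n-1}T_a$ and the tail $\sum_{a\geq n+3}T_a$ are $\epsilon(k)$-multiples of $T_n+T_{n+1}+T_{n+2}$.

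For the head I use the backward ratio bound
$$\frac{T_{a-1}(x)}{T_a(x)}=\left(\frac{a-1}{a}\right)^{k} x^{-1}\leq \left(\frac{n-1}{n}\right)^{k}\left(\frac{n+1}{n}\right)^{k}=\left(1-\tfrac{1}{n^2}\right)^{k},$$
valid for all $a\leq n$ throughout the interval because $(a-1)/a$ is monotone in $a$ and $x^{-1}\leq ((n+1)/n)^{k}$. Iterating yields $T_{n-j}\leq (1-1/n^2)^{jk}T_n$, and summing the resulting geometric series gives $\sum_{a=1}^{n-1}T_a\leq T_n\, (1-1/n^2)^k/(1-(1-1/n^2)^k)$. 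The hypothesis $n^2=o(k/\log k)$ forces $k/n^2$ to eventually exceed every constant multiple of $\log k$, so $(1-1/n^2)^k\leq e^{-k/n^2}=\epsilon(k)$. A completely parallel forward estimate, using $x\leq ((n+2)/(n+1))^{-k}$ and the monotonicity of $(a+1)/a$, gives $T_{n+2+j}\leq (1-1/(n+2)^2)^{jk}T_{n+2}$, whence $\sum_{a\geq n+3}T_a\leq \epsilon(k)\cdot T_{n+2}$. Combining these with $T_n,T_{n+2}\leq T_n+T_{n+1}+T_{n+2}$ produces the claimed multiplicative identity.

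The first case $x\in[0,2^{-k}]$ is an endpoint variant of the same argument. The monomials $T_a(x)$ with $a\geq 2$ are monotone increasing in $x$, so $T_a(x)\leq T_a(2^{-k})=(a/2^{a-1})^k$. Since $a/2^{a-1}$ is decreasing for $a\geq 2$ with consecutive ratio $(a+1)/(2a)\leq 2/3$, the sum $\sum_{a\geq 3}T_a(x)$ is geometrically dominated by $(3/4)^k$, which is exponentially small and hence contributes only an additive $\epsilon(k)$. Combined with $T_1(x)=1$ and $T_2(x)=2^k x$ this gives $\phi_k(x)=1+2^k x+\epsilon(k)$.

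The only real obstacle is correctly calibrating the geometric-decay estimate so that the exact hypothesis $n^2=o(k/\log k)$ is what absorbs both head and tail into $\epsilon(k)$. Concretely one needs $k\log(1-1/n^2)\sim -k/n^2$ to tend to $-\infty$ faster than $\log k$, which is precisely the condition $n^2\log k /k\to 0$. Everything else reduces to monotonicity of the relevant ratios, which is why the ratio bounds need only be tested at the endpoints of the interval in order to hold throughout.
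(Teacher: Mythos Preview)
Your argument is correct and follows essentially the same route as the paper: both proofs exploit the ratio $T_{a+1}/T_a=((a+1)/a)^k x$ and bound it on the given interval by $(1-1/(n+2)^2)^k$ going forward (and analogously going backward), then sum the resulting geometric series and invoke $n^2=o(k/\log k)$ to conclude the tails are $\epsilon(k)$. Your write-up is in fact more explicit than the paper's, which compresses the head/tail estimates into a single line each.
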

\begin{proof}
The quotient of two adjacent  terms is $\frac{(a+1)^kx}{a^k}$.  Suppose $x\leq 2^{-k}$. Notice $\frac{a+1}{2a}<\frac{3}{4}$ for $a\geq 3$. So
$$\sum_{a\geq 3}a^kx^{a-1}\leq (\frac{3}{4})^k\sum_{a\geq 0}(\frac{3}{4})^a=\epsilon(k).$$ 
Now suppose $(\frac{n+1}{n})^{-k}\leq x\leq (\frac{n+2}{n+1})^{-k}$ for some integer $n$. For $a\geq n+2$, we have 
$$\frac{(a+1)(n+1)}{a(n+2)}\leq 1-\frac{1}{(n+2)^2}. $$
So as long as $n^2=o(\frac{k}{\log k})$, $\frac{(a+1)^kx}{a^k}=\epsilon(k)$. Then
$$\sum_{a\geq n+3}a^kx^{a-1}=(1+\epsilon(k))(n+2)^kx^{n+1}). $$
 Similarly, for $a\leq n-1$, we have 
 $$\sum_{a\leq n-1}a^kx^{a-1}=(1+\epsilon(k))n^kx^{n-1}). $$ 
 The Lemma is then proved. 
\end{proof}

Now we consider $\psi_k(x)$. As in the proof of the preceding Lemma, we first notice that $c_l$ is dominated by the middle terms as long as $l^2=o(\frac{k}{\log k})$. More precisely, when $l$ is odd, 
$$c_l=(1+\epsilon(k))(\llcorner\frac{l}{2}\lrcorner\ulcorner\frac{l}{2}\urcorner)^k$$ 
where $\llcorner\lrcorner$ and $\ulcorner\urcorner$ means round-down and round-up respectively. When $l$ is even, 
$$c_l=(1+\epsilon(k))4\cdot ((l/2-1)(l/2+1))^k. $$

With these in mind,  we can now approximate $\psi_k(x)$. More precisely, we have
\begin{lem}\label{l2}
When $x\leq (\sqrt{3})^{-k}$, $\psi_k(x)=(1+\epsilon(k))(2^k+6^kx^2)$. Similarly, for $x\in [(\sqrt{\frac{n+2}{n}})^{-k},  (\sqrt{\frac{n+3}{n+1}})^{-k}]$, we have 
$$\psi_k(x)=(1+\epsilon(k))((n(n+1))^kx^{2(n-1)}+((n+1)(n+2))^kx^{2n}+((n+2)(n+3))^kx^{2(n+1)})$$ as long as $n^2=o(\frac{k}{\log k})$.
\end{lem}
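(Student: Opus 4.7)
The plan is to mirror the proof of Lemma~\ref{l1}, treating $\psi_k(x) = \sum_{l \geq 3} c_l x^{l-3}$ as a series in which only a handful of terms matter for each $x$ in the prescribed range. Using the coefficient asymptotics already established just before the lemma, I would separate odd and even contributions and abbreviate $T_m := c_{2m+1} x^{2m-2} \approx (m(m+1))^k x^{2m-2}$ and $U_m := c_{2m} x^{2m-3} \approx 4((m-1)(m+1))^k x^{2m-3}$. The three terms asserted in the lemma are exactly $T_n$, $T_{n+1}$, $T_{n+2}$; everything else must be absorbed into the $\epsilon(k)$ factor.

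The first step is to locate the peak among the odd-indexed terms. The ratio $T_{m+1}/T_m \approx ((m+2)/m)^k x^2$ equals $1$ precisely when $x = (m/(m+2))^{k/2}$, so the prescribed interval for $x$ is exactly the window bounded by the tie points $T_n = T_{n+1}$ (left endpoint) and $T_{n+1} = T_{n+2}$ (right endpoint). In the interior $T_{n+1}$ therefore dominates, while $T_n$ and $T_{n+2}$ are at worst comparable to $T_{n+1}$.

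Next, the even-$l$ terms can be shown to be $\epsilon(k)$-small by comparing $U_m$ with the geometric mean of the adjacent $T_{m-1}$ and $T_m$; this produces an overall factor of $4(\sqrt{(m-1)(m+1)}/m)^k \leq 4\exp(-k/(2m^2))$, which is $\epsilon(k)$ under the hypothesis $n^2 = o(k/\log k)$. For the outer tails in $m$, the monotonicity of $T_{m+1}/T_m$ in $x$ combined with the tie conditions forces the first neglected ratios $T_{n-1}/T_n$ and $T_{n+3}/T_{n+2}$ to be at most $(1 - c/n^2)^k = \epsilon(k)$ for some absolute $c > 0$; further ratios are uniformly bounded away from $1$, so the tail sums are geometric and dominated by their first terms. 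For $l$ so large that the sharp asymptotic breaks down, the crude bound $c_l \leq l^3 (l^2/4)^k$ combined with $x \ll 1$ yields super-exponential decay: writing $r = l/(2n+2)$, the ratio to $T_{n+1}$ behaves like $l^3 (r^2 e^{-2r})^k$ for $r \gg 1$.

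The main obstacle is bookkeeping rather than conceptual: one must check that infinitely many individually-$\epsilon(k)$ contributions sum to $\epsilon(k) \cdot T_{n+1}$, which requires the tail ratios to be bounded away from $1$ with a gap that dominates the polynomial losses in the sharp asymptotic for $c_l$. The hypothesis $n^2 = o(k/\log k)$ is calibrated exactly so that the peak index $l \sim 2n+3$ lies comfortably inside the validity regime of that asymptotic, leaving an exponentially large safety margin before the crude bound must be invoked.
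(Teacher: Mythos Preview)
Your proposal is correct and follows essentially the same approach as the paper: the paper's proof simply says the argument is ``similar to that for $\phi_k(x)$'' with the additional remark that ``the odd power terms are dominated by the adjacent even power terms,'' and you have spelled out exactly these two ingredients in detail (the geometric-mean comparison $U_m \lesssim e^{-k/(2m^2)}\sqrt{T_{m-1}T_m}$ being a quantitative version of the latter). The tail bookkeeping you outline is the analogue of the ratio argument in Lemma~\ref{l1}, so there is no substantive difference in method.
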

\begin{proof}
The proof is similar to that for $\phi_k(x)$. We only want to remind the reader that the odd power terms are omitted. The reason is that within each interval appeared in the Lemma the odd power terms are dominated by the adjacent even power terms.
\end{proof}

Lemma \ref{l1} describes a set of ladders $a_n^{-k}$ for $\phi_k(x)$, where $a_n=\frac{n+1}{n}$, $n \geq 1$. Lemma \ref{l2} describes a set of ladders $b_n^{-k}$ for $\psi_k(x)$, where $b_n=\sqrt{\frac{n+2}{n}}$, $n \geq 1$. One immediately sees that $$a_n>b_n>a_{n+1}$$
Since the integral we are interested in involves both $\phi_k(x)$ and $\psi_k(x)$ and we want to use the approximations given by Lemma \ref{l1} and lemma \ref{l2}, we will further refine our intervals to that of the form $(a_n^{-k},b_n^{-k})$ and $(b_n^{-k},a_{n+1}^{-k})$. The following is a direct consequence of Lemma \ref{l1} and Lemma \ref{l2}. 

\begin{lem}\label{intervals}
\begin{itemize}
\item Within the interval $[a_n^{-k},b_n^{-k}]$, we have 
\begin{eqnarray*}
\phi_k(x)&=&(1+\epsilon(k))(n^kx^{n-1}+(n+1)^kx^n)\\
\psi_k(x)&=&(1+\epsilon(k))((n(n+1))^kx^{2(n-1)}+((n+1)(n+2))^kx^{2n})
\end{eqnarray*}
\item Within the interval $[b_n^{-k},a_{n+1}^{-k}]$, we have 
\begin{eqnarray*}
\phi_k(x)&=&(1+\epsilon(k))((n+1)^kx^n+(n+2)^kx^{n+1})\\
\psi_k(x)&=&(1+\epsilon(k))((n(n+1))^kx^{2(n-1)}+((n+1)(n+2))^kx^{2n})
\end{eqnarray*}
\end{itemize}
\end{lem}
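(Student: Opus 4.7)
The plan is to combine the three-term approximations from Lemma \ref{l1} and Lemma \ref{l2}, and observe that on each of the finer subintervals $[a_n^{-k},b_n^{-k}]$ and $[b_n^{-k},a_{n+1}^{-k}]$ exactly one further term becomes $\epsilon(k)$-negligible, leaving the two advertised monomials.

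First I would record the strict ordering $a_n>b_n>a_{n+1}$ for every $n\ge 1$, which is a one-line arithmetic check: $a_n^2-b_n^2=\tfrac{(n+1)^2}{n^2}-\tfrac{n+2}{n}=\tfrac{1}{n^2}$ and $b_n^2-a_{n+1}^2=\tfrac{n+2}{n}-\tfrac{(n+2)^2}{(n+1)^2}=\tfrac{n+2}{n(n+1)^2}$. This implies
\[
[a_n^{-k},b_n^{-k}]\subset [a_n^{-k},a_{n+1}^{-k}]\cap[b_{n-1}^{-k},b_n^{-k}],\qquad [b_n^{-k},a_{n+1}^{-k}]\subset [a_n^{-k},a_{n+1}^{-k}]\cap[b_n^{-k},b_{n+1}^{-k}],
\]
so in each subinterval we have a three-term expansion of $\phi_k$ (from Lemma \ref{l1}) and a three-term expansion of $\psi_k$ (from Lemma \ref{l2}) at our disposal, and it remains only to show that in each the extremal term is $\epsilon(k)$ times the middle term.

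As a model computation, to drop the $(n+2)^kx^{n+1}$ term of $\phi_k$ on $[a_n^{-k},b_n^{-k}]$ one uses $x\le b_n^{-k}$ to estimate
\[
\frac{(n+2)^k x^{n+1}}{(n+1)^k x^n}\;\le\;\Bigl(\frac{n+2}{n+1}\Bigr)^k b_n^{-k}\;=\;\Bigl(\frac{\sqrt{n(n+2)}}{n+1}\Bigr)^k\;=\;\Bigl(1-\frac{1}{(n+1)^2}\Bigr)^{k/2},
\]
which is $\epsilon(k)$ under the hypothesis $n^2=o(k/\log k)$ since then $(1-1/(n+1)^2)^{k/2}\le e^{-k/(2(n+1)^2)}$ decays faster than any polynomial in $k$. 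The three remaining cases are symmetric: dropping $n^kx^{n-1}$ from $\phi_k$ on $[b_n^{-k},a_{n+1}^{-k}]$ gives the identical bound $(1-1/(n+1)^2)^{k/2}$; dropping $((n-1)n)^kx^{2(n-2)}$ from $\psi_k$ on $[a_n^{-k},b_n^{-k}]$, or $((n+2)(n+3))^kx^{2(n+1)}$ from $\psi_k$ on $[b_n^{-k},a_{n+1}^{-k}]$, produces ratios bounded by $(1-1/n^2)^k$ and $(1-1/(n+2)^2)^k$ respectively, again $\epsilon(k)$ under the same hypothesis.

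There is really no obstacle; the content of the lemma is a refinement of Lemmas \ref{l1} and \ref{l2}, driven by the geometric fact that $b_n=\sqrt{(n+2)/n}$ is the crossover point at which the outer $\phi_k$-monomials $n^kx^{n-1}$ and $(n+2)^kx^{n+1}$ (respectively the outer $\psi_k$-monomials) become of the same order, so crossing $b_n$ swaps which of the two is dominant. The one subtle point worth flagging in the write-up is the two-sided check of which $\psi_k$-interval of Lemma \ref{l2} each subinterval lies in, but this is precisely the purpose of the ordering $a_n>b_n>a_{n+1}$ established at the start.
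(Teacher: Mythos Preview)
Your proposal is correct and is exactly the argument the paper has in mind: the authors state only that the lemma ``is a direct consequence of Lemma \ref{l1} and Lemma \ref{l2}'' and give no further details, so your write-up simply fills in the computations (the ordering $a_n>b_n>a_{n+1}$ and the four ratio bounds) that justify dropping one extremal term from each three-term expansion. One minor remark: for $n=1$ there is no $b_{n-1}$, but Lemma \ref{l2} already gives the two-term formula for $\psi_k$ directly on $[0,b_1^{-k}]$, so no term needs to be dropped there.
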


Now we are ready to evaluate integrals. 

\begin{prop}
\begin{itemize}
\item[i)] When $a=1$, we have 
\begin{eqnarray*}
\int_0^{2^{-k}}\frac{\psi_k(x)dx}{(\phi_k(x))^3}=\frac{3}{8}+\epsilon(k)\\
\int_{2^{-k}}^{(\sqrt{3})^{-k}}\frac{\psi_k(x)dx}{(\phi_k(x))^3}=\frac{1}{8}+\epsilon(k)
\end{eqnarray*}
\item[ii)] When $a=2$, we have 
\begin{eqnarray*}
\int_0^{2^{-k}}2^kx\frac{\psi_k(x)dx}{(\phi_k(x))^3}=\frac{1}{8}+\epsilon(k)\\
\int_{2^{-k}}^{(\sqrt{3})^{-k}}2^kx\frac{\psi_k(x)dx}{(\phi_k(x))^3}=\frac{3}{8}+\epsilon(k)\\
\int_{(\sqrt{3})^{-k}}^{(3/2)^{-k}}2^kx\frac{\psi_k(x)dx}{(\phi_k(x))^3}=\frac{3}{8}+\epsilon(k)\\
\int_{(3/2)^{-k}}^{(\sqrt{2})^{-k}}2^kx\frac{\psi_k(x)dx}{(\phi_k(x))^3}=\frac{1}{8}+\epsilon(k)
\end{eqnarray*}
\end{itemize}
\end{prop}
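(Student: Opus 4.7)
The plan is to evaluate each of the six integrals directly by substituting the two-monomial approximations of $\phi_k$ and $\psi_k$ from Lemma~\ref{intervals} and then performing an exponential rescaling of $x$ that removes the $k$-dependence of the main term.

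Concretely, on an interval of the form $[a_n^{-k},b_n^{-k}]$ (including the degenerate piece $[0,a_1^{-k}]$) Lemma~\ref{intervals} gives $\phi_k=(1+\epsilon(k))\,n^k x^{n-1}(1+a_n^k x)$, so the natural substitution is $u=a_n^k x$; it sends the lower endpoint to $u=1$ (or $u=0$) and the upper to $(a_n/b_n)^k\to\infty$. On an interval $[b_n^{-k},a_{n+1}^{-k}]$ we substitute $v=a_{n+1}^k x$, sending the upper endpoint to $v=1$ and the lower to $(a_{n+1}/b_n)^k\to 0$. Under either substitution the integrand $a^k x^{a-1}\psi_k\phi_k^{-3}\,dx$ decomposes as a main piece $u^{a-n}(1+u)^{-3}\,du$ (or the $v$-analogue) plus a correction whose coefficient is $\delta^k$ for some explicit $\delta\in(0,1)$. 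The algebraic fact that the prefactor of the main piece collapses to exactly $1$ reduces to the identity $(a(n+1)/n^2)\,(n/(n+1))^{a-n+1}=1$ for $a\in\{n,n+1\}$, which is immediate by inspection, together with its second-type analogue.

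The main terms are then evaluated from the elementary identities
\[
\int_0^1\frac{du}{(1+u)^3}=\tfrac{3}{8}, \quad \int_1^\infty\frac{du}{(1+u)^3}=\tfrac{1}{8}, \quad \int_0^1\frac{u\,du}{(1+u)^3}=\tfrac{1}{8}, \quad \int_1^\infty\frac{u\,du}{(1+u)^3}=\tfrac{3}{8},
\]
which between them account for the six values claimed. The truncation error at a runaway endpoint, i.e., the tail of an integrable function beyond $(a_n/b_n)^k$ (or the contribution below $(a_{n+1}/b_n)^k$), is exponentially small and is absorbed into $\epsilon(k)$.

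The only point requiring attention, which I expect to be the main bookkeeping task rather than a genuine obstacle, is controlling the $\delta^k$ correction over a range that is itself exponentially long in $k$. Writing the correction as $\delta^k R(u)(1+u)^{-3}\,du$ with $R$ a simple rational function, one checks in each case that $R(u)(1+u)^{-3}$ decays at worst like $1/u$ or is integrable near the other endpoint, so the correction integral grows at most like $k$; the exponentially small prefactor $\delta^k$ then yields an $\epsilon(k)$ contribution and closes the argument.
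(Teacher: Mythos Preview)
Your approach is exactly the one the paper takes (the paper's proof is one sentence: replace $\phi_k,\psi_k$ by the approximations of Lemma~\ref{intervals} and evaluate after a substitution), and your substitutions $u=a_n^k x$, $v=a_{n+1}^k x$ are the right ones; the four elementary integrals you list indeed produce the six values claimed.

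There is, however, a small inaccuracy in your treatment of the correction term. It is not true that $R(u)(1+u)^{-3}$ decays at worst like $1/u$ in every case. For the second integral of part~ii) (that is, $a=2$ on $[a_1^{-k},b_1^{-k}]$) the correction after substituting $u=2^kx$ is $(3/4)^k\int_1^{(2/\sqrt3)^k} u^3(1+u)^{-3}\,du$, whose integrand tends to $1$ and whose integral therefore grows like $(2/\sqrt3)^k$, not like $k$. The product is still $\epsilon(k)$ because $(3/4)^k(2/\sqrt3)^k=(\sqrt3/2)^k\to 0$, and the analogous check works in the third integral of part~ii), where the correction $(3/4)^k\int_{(\sqrt3/2)^k}^1 v^{-2}(1+v)^{-3}\,dv$ behaves the same way. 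So the conclusion stands, but your blanket justification ``grows at most like $k$'' should be replaced by the observation that in each instance the exponential growth of the correction integral is dominated by the smaller base $\delta$ of the prefactor $\delta^k$; concretely $\delta\cdot(a_n/b_n)<1$ and $\delta\cdot(b_n/a_{n+1})<1$ in the relevant cases.
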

\begin{proof}
For each integral, we replace $\phi_k(x)$ and  $\psi_k(x)$ with the corresponding approximations listed in lemma \ref{intervals}. Then by simple substitutions, we can evaluate the integrals.
\end{proof}
The picture we see for $a=2$ actually reflects the picture for general $a$. We will use the following notations:
\begin{eqnarray*}
I_{a,n}=\int_{a_n^{-k}}^{b_n^{-k}}(a+1)^kx^a\frac{\psi_k(x)dx}{(\phi_k(x))^3}\\
I_{a,n}'=\int_{b_n^{-k}}^{a_{n+1}^{-k}}(a+1)^kx^a\frac{\psi_k(x)dx}{(\phi_k(x))^3}
\end{eqnarray*}
\begin{prop}For $n\geq 2$
\begin{eqnarray*}
I_{n-1,n}=\frac{1}{8}+\epsilon(k)\\
I_{n,n}=\frac{3}{8}+\epsilon(k)\\
I_{n,n}'=\frac{3}{8}+\epsilon(k)\\
I_{n+1,n}'=\frac{1}{8}+\epsilon(k)
\end{eqnarray*}
\end{prop}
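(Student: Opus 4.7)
My plan is to carry out the same substitution strategy used in the preceding proposition, now for general $n\ge 2$. By Lemma \ref{intervals}, on each of the two subintervals $[a_n^{-k}, b_n^{-k}]$ and $[b_n^{-k}, a_{n+1}^{-k}]$ we may replace $\phi_k$ and $\psi_k$ by their explicit two-term approximations at the cost of a multiplicative factor $1+\epsilon(k)$, and the four integrals then reduce to elementary rational integrals after a suitable rescaling of $x$.

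For $I_{a,n}$ on $[a_n^{-k}, b_n^{-k}]$ I would introduce the dimensionless variable $u=\bigl(\tfrac{n+1}{n}\bigr)^k x$, which ranges over $[1, \beta^{k/2}]$ where $\beta:=\tfrac{(n+1)^2}{n(n+2)}>1$. In these coordinates Lemma \ref{intervals} reads $\phi_k(x)\approx n^k x^{n-1}(1+u)$ and $\psi_k(x)\approx (n(n+1))^k x^{2(n-1)}(1+\beta^{-k}u^2)$, and a direct computation gives
\[
(a+1)^k x^a\,\frac{\psi_k(x)}{\phi_k(x)^3}\,dx \;=\; \Bigl(\tfrac{a+1}{n}\Bigr)^{k}\Bigl(\tfrac{n}{n+1}\Bigr)^{(a-n+1)k} u^{a-n+1}\cdot \frac{1+\beta^{-k}u^2}{(1+u)^3}\,du.
\]
The miracle is that for $a\in\{n-1,n\}$, which are precisely the values for which $(a+1)^k x^a$ matches one of the two dominant monomials of $\phi_k$ on this subinterval, the scalar prefactor collapses to $1$. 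For the $I'$-integrals on $[b_n^{-k}, a_{n+1}^{-k}]$ I would use the analogous variable $v=\bigl(\tfrac{n+2}{n+1}\bigr)^k x\in[\beta^{-k/2},1]$, together with $\phi_k\approx(n+1)^k x^n(1+v)$ and (now factoring out the dominant \emph{second} term of $\psi_k$) $\psi_k\approx((n+1)(n+2))^k x^{2n}(1+\beta^{-k}v^{-2})$. An identical cancellation then holds for $a\in\{n,n+1\}$, reducing those integrals to $\int_{\beta^{-k/2}}^1 v^{a-n}\,\frac{1+\beta^{-k}v^{-2}}{(1+v)^3}\,dv$.

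The four leading-order integrals are elementary: $\int_1^{\infty}\frac{du}{(1+u)^3}=\tfrac18$, $\int_1^{\infty}\frac{u\,du}{(1+u)^3}=\tfrac38$, $\int_0^1\frac{dv}{(1+v)^3}=\tfrac38$, and $\int_0^1\frac{v\,dv}{(1+v)^3}=\tfrac18$, matching the four asserted values. The main point to verify is that the $\beta^{\pm k}$-correction pieces of each integrand, and the tails produced by extending $[1,\beta^{k/2}]$ to $[1,\infty)$ or by extending $[\beta^{-k/2},1]$ down to $[0,1]$, each contribute only $\epsilon(k)$. Here the hypothesis $n^2=o(k/\log k)$ built into Lemma \ref{intervals} is decisive: it forces $\beta-1=\tfrac{1}{n(n+2)}\gg\tfrac{\log k}{k}$, so $\beta^{k/2}\gg k^m$ for every $m$, making $\beta^{-k/2}$ superpolynomially small. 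A quick case-by-case bound (the most delicate being $\beta^{-k}\int_1^{\beta^{k/2}}\tfrac{u^3}{(1+u)^3}\,du = O(\beta^{-k/2})$ for the secondary term in $I_{n,n}$) then absorbs every error into the $(1+\epsilon(k))$-factor inherited from Lemma \ref{intervals}, finishing the proof.
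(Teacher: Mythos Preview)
Your proof is correct and follows essentially the same route as the paper. The paper performs the identical substitution $y=(\tfrac{n+1}{n})^k x$ (your $u$) for $I_{a,n}$, writes $b=(\tfrac{n(n+2)}{(n+1)^2})^k=\beta^{-k}$ and $d=b^{-1/2}=\beta^{k/2}$, evaluates $\int_1^d \frac{y(1+by^2)}{(1+y)^3}\,dy$ explicitly by partial fractions to get $\tfrac38+\epsilon(k)$, and then simply remarks that the remaining three integrals are handled ``in the same way''; your treatment of the error terms and of the $I'$-integrals via $v=(\tfrac{n+2}{n+1})^k x$ is just a slightly more explicit version of what the paper leaves to the reader.
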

\begin{proof}
Plugging in the approximations for $\phi_k(x)$ and $\psi_k(x)$, we get
$$I_{a,n}=\int_{a_n^{-k}}^{b_n^{-k}}(\frac{(a+1)(n+1)}{n^2})^kx^{a-n+1}\frac{1+(\frac{n+2}{n+1})^kx^2}{(1+(\frac{n+1}{n})^kx)^3}dx$$
and 
$$I_{a,n}'=\int_{b_n^{-k}}^{a_{n+1}^{-k}}(\frac{(a+1)n}{(n+1)^2})^kx^{a-n-2}\frac{1+(\frac{n+2}{n+1})^kx^2}{(1+(\frac{n+2}{n+1})^kx)^3}dx$$
When $a=n$, we use the substitution $y=(\frac{n+1}{n})^kx$, and get 
\begin{eqnarray*}
I_{n,n}&=&\int _1^d\frac{x(1+bx^2)}{(1+x)^3)}dx\\
&=&\frac{1+b}{2(1+x)^2}-\frac{1+3b}{1+x}+b(1+x)-3b\log (1+x)|^d_1\\
&=&(\frac{1}{2(1+x)^2}-\frac{1}{1+x})|_1^d+\epsilon(k)\\
&=&\frac{3}{8}+\epsilon(k),
\end{eqnarray*}
where $b=(\frac{(n+2)n}{(n+1)^2})^k$ and $d=\sqrt{\frac{1}{b}}$.

We can compute the other 3 intergrals in the same way, using the fact the integrands are all rational functions. 
\end{proof}

In order to calculate $\mu_a$, we need also calculate the integrals on other intervals. The following lemma tells us that we already have the main value.
\begin{prop}\label{goodrem}
For $n\geq 2$
\begin{eqnarray*}
I_{n-2,n}=\epsilon(k)\\
I_{n+1,n}=\epsilon(k)\\
I_{n-1,n}'=\epsilon(k)\\
I_{n+2,n}'=\epsilon(k)
\end{eqnarray*}
\end{prop}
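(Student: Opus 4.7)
The strategy is to follow the same template used for $I_{n,n}$ in the preceding proposition: substitute in the approximations for $\phi_k(x)$ and $\psi_k(x)$ provided by Lemma \ref{intervals}, perform a change of variable so that the integration range becomes a subinterval of $[0,\infty)$ in a new variable $y$, and reduce the problem to estimating an integral of a simple rational function in $y$ multiplied by an overall numerical prefactor depending on $k$ and $n$. The key observation is that in each of the four cases the prefactor turns out to be of the form $(1-c_n)^k$ with $c_n\geq c/n^2$ for some absolute constant $c>0$, which is $\epsilon(k)$ under the running hypothesis $n^2=o(k/\log k)$, since then $kc_n\gtrsim k/n^2\gg \log k$.

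Concretely, for $I_{a,n}$ with $a\in\{n-2,n+1\}$ I would use the substitution $y=((n+1)/n)^kx$, which maps $[a_n^{-k},b_n^{-k}]$ to a subinterval of $[1,\infty)$. The result is an expression of the form
\[
I_{a,n} = \Bigl(\tfrac{a+1}{n+1}\Bigr)^k\Bigl(\tfrac{n}{n+1}\Bigr)^{k(a-n)}\int_1^{d_n} y^{a-n+1}\,\frac{1+\beta y^2}{(1+y)^3}\,dy + \epsilon(k),
\]
where $d_n=((n+1)^2/(n(n+2)))^{k/2}$, $\beta\leq 1$, and the numerical prefactor simplifies to $(1-1/n^2)^k$ for $a=n-2$ and to $(1-1/(n+1)^2)^k$ for $a=n+1$. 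A parallel substitution $y=((n+2)/(n+1))^kx$ handles $I_{a,n}'$ on $[b_n^{-k},a_{n+1}^{-k}]$, giving prefactors $(1-1/(n+1)^2)^{2k}$ for $a=n-1$ and $(1-2/((n+1)(n+2)))^k$ for $a=n+2$. These are all of the required form.

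The remaining rational integrals are uniformly bounded in $n,k$ except in the single case $a=n-1$ in $I'$, where the change of variables produces the factor $y^{-3}$ and the lower endpoint $y_0=(1-1/(n+1)^2)^{k/2}$ approaches $0$. There the rational integral itself grows like $y_0^{-2}=(1-1/(n+1)^2)^{-k}$, but this blow-up is exactly compensated by the squared prefactor, leaving a net factor of $(1-1/(n+1)^2)^k=\epsilon(k)$. In every other case the rational integral grows at most polynomially in $k$ and is absorbed into the $\epsilon(k)$ prefactor.

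The main obstacle is not really conceptual but combinatorial: one must carefully track how the exponents of $n$, $n+1$, $n+2$ cancel between the factors coming from the approximations in Lemma \ref{intervals} and the Jacobian plus the power of $x$ transforming under the substitution. Once this bookkeeping is done, the appeal to $n^2=o(k/\log k)$ gives $(1-c_n)^k\leq e^{-kc_n}$ smaller than any polynomial in $k$, which is the desired $\epsilon(k)$ decay, and the four estimates follow uniformly in $n$.
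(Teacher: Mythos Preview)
Your approach is exactly the one the paper intends: it says only ``the calculations are basically the same as that in the last proposition,'' i.e.\ plug in the approximations from Lemma~\ref{intervals}, substitute $y=((n{+}1)/n)^kx$ on $[a_n^{-k},b_n^{-k}]$ (resp.\ $y=((n{+}2)/(n{+}1))^kx$ on $[b_n^{-k},a_{n+1}^{-k}]$), and read off a prefactor of the shape $(1-c_n)^k$ with $c_n\asymp n^{-2}$, which is $\epsilon(k)$ under the standing hypothesis $n^2=o(k/\log k)$. Your prefactors $(1-1/n^2)^k$, $(1-1/(n{+}1)^2)^k$, $(1-1/(n{+}1)^2)^{2k}$, $(1-2/((n{+}1)(n{+}2)))^k$ are all correct.

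There is one slip in the last paragraph. In the $I'$ integrals your substitution turns the numerator $1+((n{+}2)/n)^kx^2$ into $1+\beta^{-1}y^2$ with $\beta=(n(n{+}2)/(n{+}1)^2)^k$, and $\beta^{-1}$ is exponentially \emph{large}. Hence for $I'_{n+2,n}$ the remaining integral $\int_{y_0}^{1}(1+\beta^{-1}y^2)(1+y)^{-3}\,dy$ is of order $\beta^{-1}$, not polynomially bounded. This does not break the argument: the product of your prefactor $(1-2/((n{+}1)(n{+}2)))^k$ with $\beta^{-1}$ equals $((n{+}1)(n{+}3)/(n{+}2)^2)^k=(1-1/(n{+}2)^2)^k$, which is again $\epsilon(k)$. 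Similarly, in $I'_{n-1,n}$ the $\beta^{-1}y^{-1}$ term contributes an additional $\beta^{-1}\log(1/y_0)\sim \beta^{-1}k/(2(n{+}1)^2)$, which is actually larger than the $y_0^{-2}=\beta^{-1}$ you singled out; multiplied by your prefactor $\beta^2$ this is $O(\beta\,k/n^2)=\epsilon(k)$. So the conclusion stands, but the sentence ``in every other case the rational integral grows at most polynomially in $k$'' should be replaced by the observation that in the $I'$ cases the integral may grow like $\beta^{-1}$ (or $\beta^{-1}$ times a factor polynomial in $k$), which is always dominated by the prefactor.
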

The calculations are basically the same as that in the last proposition. This proposition shows that the integrals on the nearby intervals are negligible.
 As we have remarked, the mass of the integrands decay rapidly away from the main intervals. More precisely, we can write 
 $$\mu_n=\int_0^1 \frac{n^kx^{n-1}}{\phi_k(x)}\omega_{k, 0}. $$ 
 We claim the contribution of the integral from $x\leq a_{n-1}^{-k}$ or $x\geq a_{n+1}^{-k}$ are both $\epsilon(k)$. Since $a=o(k^{1/2}(\log k)^{-1})$, we know from the definition of $\phi_k(x)$ that the integrand itself is $\epsilon(k)$ for $x$ in this region. Now by Lemma \ref{lem2-2}, it follows that the contribution from $(0, a_{n-1}^{-k})$ and $(a_{n+1}^{-k}, e^{-k^{1/2}(\log k)^{-1}})$ to $\mu_n$ is  $\epsilon(k)$. When $x>e^{-k^{1/2}(\log k)^{-1}}$, we know from Case I that $\omega_{k, 0}\leq 2\rho_{k,0}\omega_0$. But since 
 $$\int \frac{n^kx^{n-1}}{\phi_k(x)}\rho_{k,0}\omega_0=1, $$ 
and  by (\ref{eqn2-2-2}) we know the contribution to this integral from $x>e^{-k^{1/2}(\log k)^{-1}}$ is $\epsilon(k)$, so the contribution to $\mu_n$ is also $\epsilon(k)$. This proves the claim. 
 
 \
 
Therefore we obtain

\begin{theorem}
For $a>1$ satisfying $a^2=o(\frac{k}{\log k})$, we have $\mu_a=1+\epsilon(k)$. When $a=1$, we have $\mu_1=\frac{1}{2}+\epsilon(k)$
\end{theorem}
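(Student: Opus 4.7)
The strategy is to cover $[0,1]$ by the ladder points $a_n^{-k}=((n+1)/n)^{-k}$ and $b_n^{-k}=(\sqrt{(n+2)/n})^{-k}$ and exploit the fact that on each resulting subinterval the infinite sums $\phi_k,\psi_k$ are well approximated by two or three monomials, per Lemma \ref{intervals}. For fixed $a$ with $a^2=o(k/\log k)$, the integrand $a^k x^{a-1}\psi_k/\phi_k^3$ is concentrated in a union of four adjacent subintervals, outside of which it decays super-polynomially in $k$ uniformly in $a$.

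For $a\geq 2$, those four main intervals are
\[
 [b_{a-2}^{-k},a_{a-1}^{-k}],\quad [a_{a-1}^{-k},b_{a-1}^{-k}],\quad [b_{a-1}^{-k},a_a^{-k}],\quad [a_a^{-k},b_a^{-k}].
\]
Setting $n=a-2,a-1,a-1,a$ respectively in the evaluations $I'_{n+1,n}, I_{n,n}, I'_{n,n}, I_{n-1,n}$ of the proposition just above Proposition \ref{goodrem}, these four integrals equal $\tfrac{1}{8},\tfrac{3}{8},\tfrac{3}{8},\tfrac{1}{8}$ up to $\epsilon(k)$. Summing gives a main term $1+\epsilon(k)$.

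The remainder is controlled in three regions. On the inner tail $[0,b_{a-2}^{-k}]$ and on the portion of the outer tail $[b_a^{-k},e^{-k^{1/2}(\log k)^{-1}}]$, Lemma \ref{l1} implies that on each ladder subinterval $a^k x^{a-1}/\phi_k(x)$ is bounded by a quotient of the form $(a/m)^k$ times a geometrically small factor with $m\neq a$, which is $\epsilon(k)$ uniformly in $a$; this together with the volume bound of Lemma \ref{lem2-2} (and analogous uniform bounds on the inner portion of the disc) makes the contribution $\epsilon(k)$, with Proposition \ref{goodrem} providing the corresponding sample computations on the immediately-adjacent off-resonance subintervals. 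On the remaining region $[e^{-k^{1/2}(\log k)^{-1}},1]$, the uniform expansion \eqref{usualexpansion} of Case I gives $\omega_{k,0}\leq 2\rho_{k,0}\omega_0$, so
\[
\int_{e^{-k^{1/2}(\log k)^{-1}}}^{1}\frac{a^k x^{a-1}}{\phi_k(x)}\,\omega_{k,0}\;\leq\;2\int_{e^{-k^{1/2}(\log k)^{-1}}}^{1}\frac{a^k x^{a-1}}{\phi_k(x)}\rho_{k,0}\omega_0,
\]
and the right-hand side is $\epsilon(k)$ by the normalization $\int_0^1\frac{a^k x^{a-1}}{\phi_k(x)}\rho_{k,0}\omega_0=1$ (a rewriting of Lemma \ref{integral}) combined with the mass-concentration estimate \eqref{eqn2-2-2}.

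For $a=1$ the same scheme applies, but the inner two of the four main intervals no longer exist since $n=a-2=-1$ and $n=a-1=0$ are out of range. What survives are the boundary integral $\int_0^{2^{-k}}\psi_k/\phi_k^3\,dx=\tfrac{3}{8}+\epsilon(k)$ and $I_{0,1}=\tfrac{1}{8}+\epsilon(k)$ from the first proposition of the section, yielding $\mu_1=\tfrac{1}{2}+\epsilon(k)$. The main technical nuisance throughout is the uniformity of the $\epsilon(k)$ estimates in $a$; this is guaranteed by the explicit decay rate $k^{-k/3}$ in the remark after Lemma \ref{integral} and by the hypothesis $n^2=o(k/\log k)$ in Lemmas \ref{l1}--\ref{l2}, which we apply at $n\approx a$, so the assembly becomes routine once the correct four intervals are identified.
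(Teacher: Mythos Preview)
Your argument is correct and follows the same route as the paper: compute the four main ladder integrals via the two propositions preceding Proposition~\ref{goodrem}, then dispose of the tails by writing $\mu_a=\int \tfrac{a^k x^{a-1}}{\phi_k}\,\omega_{k,0}$, bounding the first factor pointwise by $\epsilon(k)$ off-resonance and invoking Lemma~\ref{lem2-2} on the inner piece and the Case~I comparison $\omega_{k,0}\leq 2\rho_{k,0}\omega_0$ together with \eqref{eqn2-2-2} on the outer piece. In fact your identification of the main region as $[b_{a-2}^{-k},b_a^{-k}]$ is a bit sharper than the paper's own text, which sets the left cutoff at $a_{a-1}^{-k}$ and thereby, read literally, would miss the $\tfrac18$ coming from $I'_{a-1,a-2}$ on $[b_{a-2}^{-k},a_{a-1}^{-k}]$; your version repairs this.
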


\

\textbf{Case III}: $a\in [k^{1/2}(\log k)^{-1}, k^{1/2}\log k]$. In this case the sections $z^a$ are concentrated in the ``neck region". 
In order to estimate $\mu_a$, we will compare it with the above standard integral. We  may write $$\gamma_a(x)=\frac{\phi_k(x)}{a^kx^{a-1}}, $$  then
$$\mu_a=\int_0^1 \frac{\Delta_z\log \gamma_a(x)dx}{\gamma_a(x)}.$$
 Next we use substitution $v=\log \frac{1}{x}$. Then $\gamma_a(x)=\sum_{c=-a+1}^{\infty} (\frac{a+c}{a})^ke^{-cv}$. Let $v=u+\frac{k}{a}$, we can write $$\gamma_a(x)=f_a(u):=\sum_{c\geq -a+1}e^{k(\log(1+\frac{c}{a})-\frac{c}{a} )}e^{-cu}, $$
and since $\Delta_z=\frac{1}{x}\frac{d^2}{du^2}$ and $dx=-xdu$. We get 
\begin{equation} \label{eqn2-6}
\mu_a=\int_{-k/a}^{\infty}\frac{(\log f_a(u))''du}{f_a(u)}
\end{equation}
\begin{lem}
\begin{equation}
\mu_a=\int_{-(\log k)^2}^{(\log k)^2}\frac{(\log f_a(u))''du}{f_a(u)}+\epsilon(k)
\end{equation}
\end{lem}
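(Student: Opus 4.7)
The plan is to show that the two tail integrals
\begin{equation*}
T_{+}:=\int_{(\log k)^{2}}^{\infty}\frac{(\log f_{a}(u))''}{f_{a}(u)}\,du,\qquad T_{-}:=\int_{-k/a}^{-(\log k)^{2}}\frac{(\log f_{a}(u))''}{f_{a}(u)}\,du
\end{equation*}
are each $\epsilon(k)$, which together with (\ref{eqn2-6}) gives the claim. Writing $\alpha_{c}:=k[\log(1+c/a)-c/a]$, the function $f_{a}(u)=\sum_{c\ge-a+1}e^{\alpha_{c}-cu}$ is a sum of positive exponentials in $-u$, hence log-convex; and $(\log f_{a})''(u)$ equals the variance of $c$ under the probability measure $p_{c}(u)=e^{\alpha_{c}-cu}/f_{a}(u)$. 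In particular the integrand is pointwise non-negative, so it suffices to bound each tail separately.

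First I would establish a saddle-point lower bound on $f_{a}(u)$. The concave function $c\mapsto\alpha_{c}-cu$ attains its maximum at $c^{*}(u)=-a^{2}u/(k+au)$ with value $k(s-\log(1+s))$, where $s=au/k$. Choosing the integer $c_{0}$ nearest $c^{*}$ (admissible, i.e.\ $\ge-a+1$, throughout the relevant tails by an elementary check using $a\in[k^{1/2}(\log k)^{-1},k^{1/2}\log k]$), a second-order Taylor expansion of $\alpha$ around $c^{*}$ gives
\begin{equation*}
f_{a}(u)\ge e^{\alpha_{c_{0}}-c_{0}u}\ge \exp\!\Big(k(s-\log(1+s))-\tfrac{k}{8(a+c^{*})^{2}}\Big).
\end{equation*}
In our range of $a$ the discretization loss is at most $O((\log k)^{2})$. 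For $|u|\ge(\log k)^{2}$ with $|s|\le 1/2$ one has $k(s-\log(1+s))\ge ks^{2}/3=a^{2}u^{2}/(3k)\ge u^{2}/(3(\log k)^{2})$, using $a^{2}/k\ge(\log k)^{-2}$; while for $u\in[-k/a,-k/(2a)]$ (i.e.\ $s\in[-1,-1/2]$) the sharper bound $k(s-\log(1+s))\ge 0.19\,k$ holds.

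Second I would bound $(\log f_{a})''(u)=\mathrm{Var}(c)\le\mathbb{E}[c^{2}]=f_{a}''(u)/f_{a}(u)$ above. Splitting the defining sum for $f_{a}''$ into $-a+1\le c\le 0$ (where $c^{2}\le(a-1)^{2}\le k(\log k)^{2}$) and $c\ge 1$ (whose contribution is negligible for $u\ge(\log k)^{2}$, since $\alpha_{c}\le 0$ and $e^{-cu}$ is tiny, combined with the lower bound on $f_{a}$ above), and doing the symmetric split for $u\le-(\log k)^{2}$, one obtains $(\log f_{a})''(u)\le Ck(\log k)^{2}$ on the moderate pieces; on the extreme interval $[-k/a,-k/(2a)]$ any polynomial bound suffices, since $1/f_{a}\le\mathrm{poly}(k)\cdot 2^{-k}$ there. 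Combining with the lower bound from the previous paragraph and a standard Gaussian tail estimate yields
\begin{equation*}
T_{+}\le Ck(\log k)^{2}\int_{(\log k)^{2}}^{\infty}e^{-u^{2}/(3(\log k)^{2})+C(\log k)^{2}}\,du=\epsilon(k),
\end{equation*}
and $T_{-}$ is handled in the same way (after the sign change of $u$ on the moderate piece, plus the super-exponential decay of $1/f_{a}$ on $[-k/a,-k/(2a)]$ times the length $\le k^{1/2}\log k$ of that piece). The main obstacle is the uniform lower bound near $u=-k/a$, where $s\to -1$ makes the Taylor expansion of $\log(1+s)$ break down and one must use the full saddle-point value $k(s-\log(1+s))\sim k\log(k/(k+au))\to\infty$ rather than any fixed-$c$ term, and verify that the rounded saddle index $c_{0}\approx k/(k/a+u)-2a$ remains a valid integer index $\ge-a+1$ throughout.
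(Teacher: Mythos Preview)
Your outline is essentially sound, but it takes a noticeably heavier route than the paper and has one soft spot worth flagging.

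\textbf{How the paper argues.} The paper does not attempt any pointwise bound on $(\log f_a)''$. It simply observes that, after undoing the substitutions $v=\log\frac{1}{x}$ and $v=u+\frac{k}{a}$, the positive measure $(\log f_a(u))''\,du$ is exactly the pulled--back Fubini--Study form $\omega_{k,0}$ (up to a fixed constant), so one is computing $\int \frac{1}{f_a}\,d\omega_{k,0}$ over the tail. Since $f_a(0)\ge 1$ and $f_a$ is convex, it suffices to check $1/f_a(u)=\epsilon(k)$ at $|u|=(\log k)^2$; this is immediate from the single terms $c=\pm 1$, because $\alpha_{\pm 1}=-\tfrac{k}{2a^2}(1+O(a^{-1}))\ge -(\log k)^2$ in the range $a\in[k^{1/2}(\log k)^{-1},k^{1/2}\log k]$, giving $f_a(u)\ge e^{|u|-(\log k)^2}$. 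The tail mass of $\omega_{k,0}$ is then controlled exactly as in the paragraph following Proposition~\ref{goodrem}: Lemma~\ref{lem2-2} handles the piece near the cusp, and the Case~I comparison $\omega_{k,0}\le 2\rho_{k,0}\omega_0$ together with the mass concentration~(\ref{eqn2-2-2}) handles the remaining piece. No saddle--point lower bound and no variance estimate are needed.

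\textbf{What you do differently, and a gap to watch.} Your approach trades this global measure bound for a pointwise product estimate $(\log f_a)''\cdot f_a^{-1}$. The saddle--point lower bound on $f_a$ is correct but overkill: the $c=\pm 1$ terms already give $f_a(u)\ge e^{|u|-(\log k)^2}$, which makes the subsequent Gaussian tail integration unnecessary and also removes your stated ``main obstacle'' near $u=-k/a$ (no admissibility of $c_0$ is needed). The genuine soft spot is your ``symmetric split'' for $T_-$: for $u\ge(\log k)^2$ the dominant indices lie in the finite window $[-a+1,0]$, so $c^2\le a^2\le k(\log k)^2$ trivially, but for $u\le -(\log k)^2$ the dominant indices are $c\ge 0$ with no a priori upper bound, so $E[c^2]\le Ck(\log k)^2$ does not follow from the split you describe. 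It can be repaired easily---e.g.\ on $[-k/(2a),-(\log k)^2]$ one checks that for $c\ge 10a$ the exponent satisfies $\alpha_c-cu\le k\log(1+c/a)-\tfrac{ck}{2a}\le -\tfrac{ck}{4a}\le -2k$, so those terms contribute $\epsilon(k)$ to $E[c^2]$, while for $c\le 10a$ one has $c^2\le 100k(\log k)^2$---but as written the step is incomplete. With that patch your argument goes through; the paper's route is shorter precisely because the volume estimate for $\omega_{k,0}$ has already been done once and can be reused here.
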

\begin{proof}
The idea again is to view $\frac{1}{f_a(u)}$ as the integrand,  with the measure part given by $\omega_{k, 0}$. We refer the reader to the arguments right after Proposition \ref{goodrem}, which also works here.

Since $f_a(u)$ is a convex function in $u$, and $f_a(0)\geq 1$,  we only need to show that when $|u|=(\log k)^2$, we have $\frac{1}{f_a(u)}=\epsilon(k)$. But this is already clear when we look at the terms when $|c|=1$.
\end{proof}

Now we have
$$\int_{-(\log k)^2}^{(\log k)^2}\frac{(\log f_a(u))''du}{f_a(u)}=\int_{-(\log k)^2}^{(\log k)^2}\frac{f_a''(u)du}{(f_a(u))^2}-\int_{-(\log k)^2}^{(\log k)^2}\frac{(f_a'(u))^2du}{(f_a(u))^3}$$

The following estimates are based on the simple fact that the function $\log (1+x)-x$ is concave with only a unique maximum at $x=0$. 

\begin{lem}\label{lem-gc}
$$\int_{-(\log k)^2}^{(\log k)^2}\frac{f_a''(u)du}{(f_a(u))^2}=2\int_{-(\log k)^2}^{(\log k)^2}\frac{(f_a'(u))^2du}{(f_a(u))^3}+\epsilon(k).$$
\end{lem}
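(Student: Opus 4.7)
My plan is to integrate by parts, reducing the claim to a boundary-term estimate. From the pointwise identity
\[
\frac{d}{du}\!\left(\frac{f_a'(u)}{f_a(u)^2}\right) \;=\; \frac{f_a''(u)}{f_a(u)^2} \;-\; 2\,\frac{(f_a'(u))^2}{f_a(u)^3},
\]
integrating over $[-L, L]$ with $L=(\log k)^2$ yields
\[
\int_{-L}^{L}\!\frac{f_a''(u)\,du}{f_a(u)^2} \;-\; 2\int_{-L}^{L}\!\frac{(f_a'(u))^2\,du}{f_a(u)^3} \;=\; \frac{f_a'(L)}{f_a(L)^2} \;-\; \frac{f_a'(-L)}{f_a(-L)^2}.
\]
It therefore suffices to prove that each boundary term on the right is $\epsilon(k)$. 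This in turn reduces to (i) a super-polynomial lower bound on $f_a(\pm L)$, and (ii) a polynomial upper bound on $|f_a'(\pm L)|/f_a(\pm L)$.

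For (i), I would carry out a saddle-point analysis of
\[
f_a(u) \;=\; \sum_{c\geq -a+1} w_c\, e^{-cu}, \qquad w_c := e^{k h(c/a)}, \qquad h(x) := \log(1+x) - x,
\]
using that $h$ is strictly concave with $h(0)=h'(0)=0$ and $h''(0)=-1$. Maximizing $k h(c/a) - cu$ in $c$ gives the saddle $c^* = -a^2 u/(k+au)$, at which a Legendre-transform computation yields
\[
k h(c^*/a) - c^* u \;=\; au - k\log\!\left(1+\tfrac{au}{k}\right) \;=\; \tfrac{a^2 u^2}{2k} \;+\; O\!\left(\tfrac{a^3|u|^3}{k^2}\right).
\]
For $|u|=L$ and $a\in [k^{1/2}(\log k)^{-1},\,k^{1/2}\log k]$ one has $au/k\to 0$ and $a^2u^2/(2k)\geq (\log k)^2/2$. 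Retaining just the nearest-integer term of the series and absorbing the at-most $O((\log k)^2)$ loss that comes from quadratic Taylor expansion of $kh(\cdot/a)$ around $c^*$, I obtain $f_a(\pm L)\geq \tfrac{1}{2}\exp((\log k)^2/4)$, which is super-polynomial in $k$.

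For (ii), note that $-f_a'(u)/f_a(u)$ is the weighted mean of $c$ with probability weights $w_c e^{-cu}/f_a(u)$; these concentrate near $c=c^*$ with standard deviation $O(a/\sqrt{k})$ by the same saddle-point argument. Hence $|f_a'/f_a|(\pm L) \leq |c^*| + O(a/\sqrt{k}) = O((\log k)^4)$. Combining with (i),
\[
\left|\frac{f_a'(\pm L)}{f_a(\pm L)^2}\right| \;\leq\; \frac{O((\log k)^4)}{f_a(\pm L)} \;\leq\; O\!\big((\log k)^4\, e^{-(\log k)^2/4}\big) \;=\; \epsilon(k),
\]
which proves the lemma. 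The main obstacle will be turning the saddle-point heuristic into rigorous estimates that are uniform over the full neck range of $a$, in particular controlling the rounding of the continuous maximizer $c^*$ to an integer and verifying the Gaussian concentration used to pass from a single-term lower bound on $f_a$ to a weighted-average bound on $f_a'/f_a$; both are of the same Laplace-method flavor as the concentration remark after Lemma \ref{integral}, so I do not expect genuinely new difficulties beyond careful bookkeeping.
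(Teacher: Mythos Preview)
Your proposal is correct and follows essentially the same route as the paper: integrate by parts and show the boundary terms $f_a'(\pm L)/f_a(\pm L)^2$ are $\epsilon(k)$ via a dominant-term (saddle-point) analysis, obtaining the same maximizer $c^*=-a^2u/(k+au)=O((\log k)^4)$. The paper's execution is a touch simpler: the lower bound $1/f_a(\pm L)=\epsilon(k)$ is read off directly from the $|c|=1$ terms (already noted in the preceding lemma), and at $u=+L$ it uses the crude bound $|c|\le a$ on the dominating negative-$c$ terms rather than the full saddle-point picture; your uniform probabilistic treatment of $f_a'/f_a$ as a weighted mean of $c$ is a perfectly good (and slightly more conceptual) alternative.
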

\begin{proof}
This is basically integration by parts, since $$\int_{-(\log k)^2}^{(\log k)^2}\frac{f_a''(u)du}{(f_a(u))^2}=\frac{f'_a(u)}{(f_a(u))^2}|_{-(\log k)^2}^{(\log k)^2}+2\int_{-(\log k)^2}^{(\log k)^2}\frac{(f_a'(u))^2du}{(f_a(u))^3}.$$
So we need to evaluate the boundary values. 
When $u=(\log k)^2$, it is easy to see that both $f_a(u)$ and $f_a'(u)$ are dominated by the terms with $c\leq 0$. So $|\frac{f'_a(u)}{(f_a(u))^2}|\leq \frac{a}{f_a(u)}=\epsilon(k)$. Now we consider $u=-(\log k)^2$. 
Let $P(c)=k(\log(1+\frac{c}{a})-\frac{c}{a})-cu$. Then $P'(c)=\frac{k}{a+c}-\frac{k}{a}-u$ and $g''(c)=-\frac{k}{(a+c)^2}<0$. So $P(c)$ is a concave function of $c$. When $u=-(\log k)^2$, $f_a(u)$ and $f_a'(u)$ are dominated by the terms with $c>0$. The zero of $P'(c)$ is $c_0=\frac{k}{k/a+u}-a=\frac{-au}{\frac{k}{au}+1}$. So $f_a(u)$ is dominated by a term around $c_0=O((\log_k)^4)$ (since $c_0$ may not be an integer). Thus at $u=-(\log k)^2$, $|\frac{f'_a(u)}{(f_a(u))^2}|\leq \frac{|c_0|}{f_a(u)}=O(\frac{(\log_k)^4}{f_a(u)})=\epsilon(k)$. And the lemma is proved.
\end{proof}

From these we obtain 
$$\mu_a=\frac{1}{2}\int_{-(\log k)^2}^{(\log k)^2}\frac{f_a''(u)du}{(f_a(u))^2}+\epsilon(k). $$
We now further simplify the integral. Let 
$$g_a(u)=\sum_{|c|\leq (\log k)^5} e^{k(\log (1+\frac{c}{a})-\frac{c}{a})-cu}. $$
Then by similar arguments as in the proof of the previous lemma we  see that 
$$f_a(u)=g_a(u)(1+\epsilon(k)), $$
and 
$$f_a''(u)=g_a''(u)(1+\epsilon(k)). $$
So 
\begin{equation}
\mu_a=\frac{1}{2}\int_{-(\log k)^2}^{(\log k)^2}\frac{g_a''(u)du}{(g_a(u))^2}+\epsilon(k). 
\end{equation}

\

\noindent Now we define 
$$h_a(u)=\sum_{c\in \Z}e^{-\frac{kc^2}{2a^2}}e^{-cu}.$$
As before for $u\in [-(\log k)^2, (\log k)^2]$ we have
$$h_a(u)=(1+\epsilon(k))\sum_{|c|\leq (\log k)^5}e^{-\frac{kc^2}{2a^2}}e^{-cu}.$$

\begin{lem}
We have
$$\int_{-(\log k)^2}^{(\log k)^2}\frac{g_a''(u)du}{(g_a(u))^2}=\int_{-\infty}^{(\infty)^2}\frac{h_a''(u)du}{(h_a(u))^2}+O(\frac{(\log k)^{60}}{k}).$$
\end{lem}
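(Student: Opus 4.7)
The proof splits into two parts: a tail bound that extends the $h_a$ integral from $[-(\log k)^2,(\log k)^2]$ to all of $\R$ at the cost of $\epsilon(k)$, and a pointwise comparison of the two integrands on the bounded interval. For the tail, completing the square in $c$ gives
\[
h_a(u) = e^{u^2 a^2/(2k)}\sum_{c\in\Z} e^{-k(c+ua^2/k)^2/(2a^2)},
\]
so $h_a(u)\gtrsim (a/\sqrt{k})\,e^{u^2 a^2/(2k)}$. For $|u|\geq (\log k)^2$ and $a\geq k^{1/2}(\log k)^{-1}$ this is at least $e^{(\log k)^2/2}$, which is super-polynomial in $k$. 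A routine estimate $|h_a''(u)|\leq (\log k)^{O(1)}\,h_a(u)$ then shows that the integrand $h_a''/h_a^2$ is $\epsilon(k)$ outside the bounded interval, so extending the integration introduces only $\epsilon(k)$ error.

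For the comparison on $[-(\log k)^2,(\log k)^2]$, Taylor expand
\[
k\bigl(\log(1+c/a)-c/a\bigr)=-\frac{kc^2}{2a^2}+\frac{kc^3}{3a^3}-\frac{kc^4}{4a^4}+O\!\Bigl(\frac{kc^5}{a^5}\Bigr).
\]
For $|c|\leq (\log k)^5$ and $a\geq k^{1/2}(\log k)^{-1}$ every correction beyond the quadratic term is $o(1)$, so exponentiating each summand and using the identity $\sum_c c^j e^{-kc^2/(2a^2)-cu}=(-1)^j h_a^{(j)}(u)$ yields
\[
g_a(u) = h_a(u)-\frac{k}{3a^3}h_a'''(u)+\frac{k}{4a^4}h_a^{(4)}(u)+\frac{k^2}{18a^6}h_a^{(6)}(u)+\cdots,
\]
with a parallel expansion for $g_a''(u)$.

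The essential observation is that $h_a$ is an \emph{even} function of $u$ (the bijection $c\mapsto -c$ of $\Z$ preserves the Gaussian weight while flipping $-cu$ into $+cu$), so every odd-order derivative $h_a^{(2j+1)}$ is odd and integrates to zero against the symmetric interval. In particular the leading cubic correction $-\tfrac{k}{3a^3}h_a'''(u)$ drops out. What survives are the even corrections driven by $h_a^{(4)}, h_a^{(6)},\ldots$; their sizes are controlled by $|h_a^{(2j)}(u)|\lesssim (|c_0|+\sigma)^{2j}h_a(u)$ with $c_0=-ua^2/k$ and $\sigma=a/\sqrt{k}$, both bounded by $(\log k)^4$ in our parameter range. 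Pairing these moment bounds with the prefactors $k/a^4,k^2/a^6,\ldots$ and combining with the analogous estimates for $g_a''/h_a''-1$ via
\[
\frac{g_a''}{g_a^2}-\frac{h_a''}{h_a^2}=\frac{h_a''}{h_a^2}\left(\frac{g_a''/h_a''}{(g_a/h_a)^2}-1\right),
\]
yields a pointwise error of order $(\log k)^{O(1)}/k$ and, after multiplying by the $2(\log k)^2$ length of the interval, the integrated bound $O((\log k)^{60}/k)$.

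The main obstacle is this parity-based cancellation. The naive cubic Taylor bound alone gives only $O((\log k)^{15}/k^{1/2})$ pointwise, an order of magnitude too weak, so the $u\mapsto -u$ symmetry of $h_a$ and the resulting vanishing of odd-derivative corrections on the symmetric interval are indispensable. The delicate part is to verify that all residual contributions are genuinely even in $u$ and to track log powers carefully enough that the integrated error lands at $O((\log k)^{60}/k)$.
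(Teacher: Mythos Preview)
Your argument is correct and follows essentially the same route as the paper: Taylor-expand the exponent, isolate the cubic correction as an \emph{odd} perturbation of the even function $h_a$ (the paper calls it $G_a$, you identify it with $-\tfrac{k}{3a^3}h_a'''$), and use the $u\mapsto -u$ symmetry on the symmetric interval to kill the $O(k^{-1/2})$ terms so that only the $O((\log k)^{O(1)}/k)$ even contributions survive, then extend the $h_a$ integral to $\R$ at cost $\epsilon(k)$. One small phrasing point: your ``pointwise error of order $(\log k)^{O(1)}/k$'' refers only to the \emph{even} part of $\frac{g_a''}{g_a^2}-\frac{h_a''}{h_a^2}$; the odd part is pointwise $O((\log k)^{O(1)}/k^{1/2})$ but integrates to zero, and the products of two odd pieces (e.g.\ $A_{\mathrm{odd}}^2$, $A_{\mathrm{odd}}B_{\mathrm{odd}}$) are even and land at $O((\log k)^{O(1)}/k)$ as needed.
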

\begin{proof}
For $u\in [-(\log k)^2, (\log k)^2]$, we have 
\begin{eqnarray*}
g_a(u)&=&\sum_{|c|\leq (\log k)^5} e^{-\frac{kc^2}{2a^2}-cu}(1+\frac{kc^3}{a^3}+O(\frac{k^2c^6}{a^6}+\frac{kc^4}{a^4}))\\
&=&h_a(u)(1+O(\frac{(\log k)^{36}}{k}))+G_a(u), 
\end{eqnarray*}
where 
$$G_a(u)=\sum_{|c|\leq (\log k)^5} e^{-\frac{kc^2}{2a^2}-cu}\frac{kc^3}{a^3}=h_a(u)O(\frac{(\log k)^{20}}{k^{1/2}}). $$
Similarly
$$g_a''(u)=h_a''(u)(1+O(\frac{(\log k)^{46}}{k}))+G_a''(u),  $$
and 
$$G_a''(u)=h_a''(u)O(\frac{(\log k)^{30}}{k^{1/2}}). $$
Notice both $G_a(u)$ and $G_a''(u)$ are odd functions, so 
$$\int_{-(\log k)^2}^{(\log k)^2} \frac{g_a''(u)du}{g_a(u)}=(1+O(\frac{(\log k)^{60}}{k}))\int_{-(\log k)^2}^{(\log k)^2} \frac{h_a''(u)du}{h_a(u)}. $$
The lemma then follows from the fact that the last integral can be replaced by the integral over $(-\infty, \infty)$, with a possibly $\epsilon(k)$ error. The proof is similar to the previous arguments, and we omit it here. 
\end{proof}

Now the following elementary lemma is crucial for our purpose. 

\begin{lem}\label{key}
For all $a>0$,  we have $$\int_{-\infty}^{\infty}\frac{h_a''(u)du}{(h_a(u))^2}=2$$
\end{lem}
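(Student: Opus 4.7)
The plan is to exploit the fact that $h_a$ is essentially a Jacobi theta function, quasi-periodic in $u$ with period $\tau:=k/a^2$. A trivial reindexing $c\mapsto c+1$ in the defining series gives
$$h_a(u+\tau)=e^{\tau/2+u}\,h_a(u),$$
and iterating produces $h_a(u+n\tau)=e^{n^2\tau/2+nu}\,h_a(u)$ for every $n\in\Z$. Differentiating twice in $u$,
\begin{align*}
h_a'(u+n\tau)&=e^{n^2\tau/2+nu}\bigl(nh_a(u)+h_a'(u)\bigr),\\
h_a''(u+n\tau)&=e^{n^2\tau/2+nu}\bigl(n^2h_a(u)+2nh_a'(u)+h_a''(u)\bigr),
\end{align*}
while $h_a(u+n\tau)^2=e^{n^2\tau+2nu}h_a(u)^2$.

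I would then tile $\R$ by translates of $[0,\tau]$ and, subject to interchanging sum and integral,
$$\int_{-\infty}^{\infty}\frac{h_a''(u)}{h_a(u)^2}\,du=\int_0^{\tau}\sum_{n\in\Z}\frac{h_a''(u+n\tau)}{h_a(u+n\tau)^2}\,du.$$
Substituting the displayed formulas and factoring out $1/h_a(u)^2$ reduces the inner sum to
$$\frac{1}{h_a(u)^2}\Bigl[h_a(u)\sum_n n^2 e^{-n^2\tau/2-nu}+2h_a'(u)\sum_n n\,e^{-n^2\tau/2-nu}+h_a''(u)\sum_n e^{-n^2\tau/2-nu}\Bigr].$$
Since $h_a(u)=\sum_n e^{-n^2\tau/2-nu}$, differentiating in $u$ yields $\sum_n n\,e^{-n^2\tau/2-nu}=-h_a'(u)$ and $\sum_n n^2 e^{-n^2\tau/2-nu}=h_a''(u)$; the bracket collapses to $2h_a(u)h_a''(u)-2(h_a'(u))^2$, so the inner sum equals $2(\log h_a)''(u)$.

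The outer integral now telescopes: taking the log of the quasi-periodicity identity gives $(\log h_a)'(u+\tau)=(\log h_a)'(u)+1$, so
$$\int_0^\tau 2(\log h_a)''(u)\,du=2\bigl[(\log h_a)'(\tau)-(\log h_a)'(0)\bigr]=2,$$
which is the desired identity. The only routine obstacle is justifying the interchange of sum and integral; this follows from the lower bound $h_a(u)\geq Ce^{u^2/(2\tau)}$, obtained by completing the square in the defining series (equivalently, by Poisson summation), which gives super-exponential decay of $1/h_a$ and uniform-on-compacts bounds for each term in the sum, so dominated convergence applies.
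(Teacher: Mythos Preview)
Your proof is correct. Both your argument and the paper's exploit the Jacobi theta structure of $h_a$, but the execution differs. The paper completes the square to write $h_b(u)=H_b(u)e^{-u^2/(4b)}$ with $H_b$ genuinely periodic of period $2b$ (where $b=k/(2a^2)=\tau/2$), and uses that same tiling trick to first compute the \emph{simpler} integral $\int_{-\infty}^{\infty} du/h_b(u)=2b$; it then obtains the desired identity by differentiating this in the parameter $b$ and invoking the heat equation $\partial_b h_b=-h_b''$. You instead attack the original integrand directly: tiling by the quasi-period $\tau$ and summing, you recognize the three series as $h_a$, $-h_a'$, $h_a''$ and collapse everything to $2(\log h_a)''$, after which the quasi-periodicity gives the jump of $(\log h_a)'$ across one period. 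Your route avoids the auxiliary parameter differentiation and the heat-equation observation, at the cost of a slightly longer algebraic reduction; the paper's route is a touch slicker but relies on spotting the heat-equation identity. Both are short and self-contained.
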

\begin{proof}
For simplicity let $b=\frac{k}{2a^2}$, and by abuse of notation we will $h_b(u)=\sum_{c\in \Z} e^{-bc^2}e^{-cu}$.  Notice $h_b(x)=H_b(x)e^{-u^2/(4b)}$, where $H_b(x)=\sum_{c\in \Z}e^{-b(c-\frac{u}{2b})^2}$. Since the summation is for all integers, we see that $H_b(u)$ is periodic with period $2b$. So 
$$\int_{-\infty}^{\infty}\frac{dx}{h_b(u)}=\int_{0}^{2b}\frac{\sum_{c\in \Z}e^{-\frac{(u+2bc)^2}{4b}}}{H_b(u)}du=\int_0^{2b}du=2b.$$
It is easy to justify that differentiating with respect to $b$ commute with the integral, and notice that $h_b(u)$ satisfies the heat equation $\frac{d}{db}h_b(u)=-h_b''(u)$, we obtain 
$$\int_{-\infty}^{\infty} \frac{h_b''(u)du}{h_b(u)^2}=2. $$

\end{proof}

To sum up, the above discussion yields 
\begin{theorem}
For $a\in [k^{1/2}(\log k)^{-1}, k^{1/2}\log k]$, we have $$\mu_a=1+O(k^{-1}(\log k)^{60})$$
\end{theorem}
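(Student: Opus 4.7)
The plan is to chain together all the intermediate estimates already established in the excerpt, since the theorem is essentially the bookkeeping conclusion of the sequence of approximations $\mu_a \rightsquigarrow f_a \rightsquigarrow g_a \rightsquigarrow h_a$.

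First I would start from the integral representation
\[
\mu_a \;=\; \int_{-k/a}^{\infty}\frac{(\log f_a(u))''\,du}{f_a(u)},
\]
and use the truncation lemma to replace the domain of integration by $[-(\log k)^2,(\log k)^2]$ at a cost of an $\epsilon(k)$ error. Next, expanding $(\log f_a)'' = f_a''/f_a - (f_a')^2/f_a^2$ gives
\[
\mu_a \;=\; \int_{-(\log k)^2}^{(\log k)^2}\frac{f_a''(u)\,du}{f_a(u)^2} \;-\; \int_{-(\log k)^2}^{(\log k)^2}\frac{(f_a'(u))^2\,du}{f_a(u)^3} + \epsilon(k).
\]
Lemma \ref{lem-gc} (the integration-by-parts identity with controlled boundary terms) collapses this to
\[
\mu_a \;=\; \tfrac{1}{2}\int_{-(\log k)^2}^{(\log k)^2}\frac{f_a''(u)\,du}{f_a(u)^2} + \epsilon(k).
\]

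The next step is to pass from $f_a$ to the truncated sum $g_a$. Since only indices $|c|\le (\log k)^5$ contribute nontrivially (the exponent $k(\log(1+c/a)-c/a)$ is bounded by $-kc^2/(3a^2)$ for such $c$ in our range of $a$), both $f_a = g_a(1+\epsilon(k))$ and $f_a'' = g_a''(1+\epsilon(k))$ on the interval, and hence the integrand changes by $\epsilon(k)$. Then one Taylor-expands the exponent $k\log(1+c/a)-kc/a = -kc^2/(2a^2) + kc^3/(3a^3) - \cdots$ to compare $g_a$ with the Gaussian sum $h_a$. As carried out in the preceding lemma, this produces the key identities
\[
g_a(u)=h_a(u)\bigl(1+O(\tfrac{(\log k)^{36}}{k})\bigr)+G_a(u), \qquad g_a''(u)=h_a''(u)\bigl(1+O(\tfrac{(\log k)^{46}}{k})\bigr)+G_a''(u),
\]
where the remainders $G_a,G_a''$ are odd functions of $u$, so their contributions to the symmetric integral vanish. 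This is the step that generates the $(\log k)^{60}$ factor in the error.

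Finally, Lemma \ref{key} evaluates the Gaussian model integral exactly,
\[
\int_{-\infty}^{\infty}\frac{h_a''(u)\,du}{h_a(u)^2} \;=\; 2,
\]
via the periodicity trick $h_a(u)=H(u)e^{-u^2/(4b)}$ combined with the heat-equation identity $\partial_b h_b = -h_b''$. Combining the four displays yields $\mu_a = \tfrac12 \cdot 2 + O(k^{-1}(\log k)^{60}) = 1 + O(k^{-1}(\log k)^{60})$, as required.

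The only nontrivial step beyond bookkeeping is ensuring that none of the boundary terms from integration by parts or from the cutoff to $|u|\le (\log k)^2$ contaminates the final exponent; the anticipated obstacle is verifying that the odd cubic correction $G_a$ does not leave a residual contribution after the symmetric cancellation, which is why the exponent is only controlled up to the $(\log k)^{60}$ factor and not something smaller.
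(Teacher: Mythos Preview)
Your proposal is correct and follows the paper's argument essentially step for step: truncate to $|u|\le(\log k)^2$, use integration by parts to reduce to $\tfrac12\int f_a''/f_a^2$, pass from $f_a$ to the finite sum $g_a$, Taylor-expand to compare with the Gaussian sum $h_a$ (using that the cubic corrections $G_a,G_a''$ are odd, so their first-order contributions to the symmetric integral cancel and only the quadratic-in-$G_a$ terms survive, producing the $(\log k)^{60}/k$ error), and finally invoke the exact identity $\int_{-\infty}^\infty h_a''/h_a^2=2$. One minor clarification: it is not that the odd terms contribute nothing at all, but rather that their \emph{linear} contributions vanish by symmetry while the quadratic ones are already of size $O((\log k)^{40}/k)$ and combine with the other multiplicative errors to give the stated bound; your last paragraph indicates you understand this.
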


From the proof it is easy to see that there is a fixed $C>0$ such that the same estimate holds for $a\in [C^{-1}k^{1/2}(\log k)^{-1},  Ck^{1/2}\log k]$.
\

The above discussion of Case III suggests that the behavior of the Bergman kernel on the neck is modeled by the infinite cylinder $\C^*$. Indeed, for any $a\in [k^{1/2}(\log k)^{-1}, k^{1/2}\log k]$ we know the section $z^a$ is concentrated in an annuli neighborhood of the circle $\log\frac{1}{|z|^2}=\frac{k-2}{a}$. If we change to cylindrical coordinates $z=e^{-(\xi/2+(k-2)/a)}$, where $\xi=u+it$. Then we see the measure $|z|^{2a}e^{-k\Phi_0}\omega_{0}=|z|^{2a-2} (\log \frac{1}{|z|^2})^{k-2}dzd\bar z$ is to the leading order term approximated by the measure 
$d\mu_0=e^{-\frac{a^2u^2}{2k}}dudt$ on the cylinder. On $\C^*$, we can define a $L^2$ norm on the space of all holomorphic functions using the measure $d\mu_0$. It is easy to see 
$$\|z^c\|^2=e^{\frac{kc^2}{2a^2}}. $$
The corresponding Bergman kernel 
$$\rho(\xi)=\sum_{c\in \Z} e^{-\frac{a^2u^2}{2k}-\frac{kc^2}{2a^2}+cu}=\sum_{c\in \Z} e^{-\frac{a^2}{2k}(u-\frac{kc}{a^2})^2}.$$
Notice we can also understand this as the Bergman kernel on $\C^*$ (up to constant multiple), endowed with the hermitian metric $e^{-\frac{a^2u^2}{2k}}$ whose curvature form is the flat cylindrical metric. Geometrically, on this neck the hyperbolic metric is approximated by a flat cylinder, and
our discussion above makes precise that this model approximates $\rho_{0, k}$ when $k$ is large. 

\section{General case}
We use the same setup of the introduction. Let $(X, D)$ be a log Riemann surface, and $L$ be an ample line bundle over $X$ endowed with the (singular) hermitian metric $h$ whose curvature form is the hyperbolic metric $\omega$. Let $\Phi_k: X\rightarrow \C\P^{N_k}$ be the map defined in the introduction. Our goal is to estimate the asymptotics of $\|\mu(\Phi_k(X), \Phi_k(D), \frac{2}{3})\|_2$ as $k\rightarrow\infty$, using a particular choice of orthonormal basis of $\mathcal H_k$. 

First given any orthonormal basis $\{s_\alpha\}$ of $\mathcal H_k$, we re-write (\ref{eqn1-2}) as 
$$\frac{3}{2}\mu(X, D, \frac{2}{3})=\mu_X+\frac{1}{2}\mu_D-\tilde c_k I, $$
where 
$$\mu_X=\int_X \langle s_\alpha, s_\beta\rangle_h \rho_k^{-1}\omega_k; $$
$$\mu_D=\sum_{i=1}^d \rho_k(p_i)^{-1} \langle s_\alpha(p_i), s_\beta(p_i)\rangle_h. $$
Using Riemann-Roch formula, we obtain 
$$\tilde c_k=\frac{kl-d+\frac{d}{2}}{kl-d-g+1}=1-\frac{S}{2}k^{-1}+O(k^{-2}), $$
where $S=-\frac{d+2g-2}{l}$ is the scalar curvature of $\omega$, by our normalization. 

Now let $D=\{p_1, \cdots, p_d\}$. For each $i$ we can find a local holomorphic coordinate chart $(U_i, z)$ of $X$ centered at $p_i$, such that $\omega=-\frac{2}{S}\omega_0$ on $U_i$, and a local holomorphic section $e_i$ of $L$ over $U_i$, with $|e_i|^2=e^{\frac{2}{S}\Phi_0}$, where $\omega_0$ and $\Phi_0$ are defined in the beginning of Section 2. We may assume $U_i=\{|z|<R\}$ for some $R<1$ and $U_i\cap U_j=\emptyset$ if $i\neq j$. Inside each  $U_i$ we are essentially reduced to the model case studied in Section 2, with a possible change of $k$ by $-\frac{2}{S}k$ (notice in the whole discussion there $k$ does not have to be an integer). For the calculation below to make the notation simpler we will without loss of generality assume $S=-2$.

Fix a smooth cut-off function $\chi_i$ that equals $1$ in $U_i$, and vanishes outside a small neighborhood of $U_i$.  To obtain global sections of $L^k$, we use H\"ormander's $L^2$ estimate. The following lemma is well-known, see for example \cite{Tian1990On}. 

 \begin{lem}
 Suppose $(M,g)$ is a complete \kahler manifold of complex dimension $n$, $\mathcal L$ is a line bundle on $M$ with hermitian metric $h$. If 
 $$\langle-2\pi i \Theta_h+Ric(g),v\wedge \bar{v}\rangle_g\geq C|v|^2_g$$
 for any tangent vector $v$ of type $(1,0)$ at any point of $M$, where $C>0$ is a constant and $\Theta_h$ is the curvature form of $h$. Then for any smooth $\mathcal L$-valued $(0,1)$-form $\alpha$ on $M$ with $\bar{\partial}\alpha=0$ and $\int_M|\alpha|^2dV_g$ finite, there exists a smooth $\mathcal L$-valued function $\beta$ on $M$ such that $\bar{\partial}\beta=\alpha$ and $$\int_M |\beta|^2dV_g\leq \frac{1}{C}|\alpha|^2dV_g$$
 where $dV_g$ is the volume form of $g$ and the norms are induced by $h$ and $g$.
 \end{lem}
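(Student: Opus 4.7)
The plan is to invoke H\"ormander's Hilbert space technique for solving $\bar\partial$ with $L^2$ estimates; the three key ingredients are the Bochner--Kodaira--Nakano identity, the completeness of $(M,g)$, and a duality argument via Hahn--Banach. Concretely, let $T$ denote the closure in $L^2$ of $\bar\partial$ acting on $\mathcal L$-valued functions, and $S$ the closure of $\bar\partial$ on $\mathcal L$-valued $(0,1)$-forms; then $ST=0$, the given $\alpha$ lies in $\ker S$, and I wish to produce $\beta$ with $T\beta=\alpha$ satisfying the quantitative bound.

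First I would establish the fundamental a priori estimate. For any smooth, compactly supported $\mathcal L$-valued $(0,1)$-form $\phi$, the Bochner--Kodaira--Nakano identity gives
\[
\|\bar\partial\phi\|^2 + \|\bar\partial^*\phi\|^2 \;=\; \|\nabla^{0,1}\phi\|^2 + \int_M \langle -2\pi i\,\Theta_h+\mathrm{Ric}(g),\phi\wedge\bar\phi\rangle\,dV_g,
\]
so the curvature hypothesis immediately yields $\|S\phi\|^2+\|T^*\phi\|^2 \geq C\|\phi\|^2$. Completeness of $g$ enters precisely at this point, through the standard Andreotti--Vesentini cutoff trick: one chooses exhausting cutoffs $\chi_j$ with $|d\chi_j|_g \to 0$ pointwise, which shows that smooth compactly supported forms are a core for $T^*$ in the graph norm, and hence that the estimate extends to every $\phi \in \mathrm{Dom}(T^*)\cap\mathrm{Dom}(S)$.

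Next I would convert this a priori estimate into existence by Hilbert space duality. Decompose any such $\phi$ orthogonally as $\phi=\phi_1+\phi_2$ with $\phi_1\in\ker S$ and $\phi_2\perp\ker S$. Because $\overline{\mathrm{Range}(T)}\subset\ker S$, one has $\phi_2\in\ker T^*$, so $T^*\phi=T^*\phi_1$; and since $\alpha\in\ker S$, one has $\langle\phi,\alpha\rangle=\langle\phi_1,\alpha\rangle$. Combined with the a priori estimate,
\[
|\langle\phi,\alpha\rangle|^2 \;\leq\; \|\phi_1\|^2\|\alpha\|^2 \;\leq\; C^{-1}\|T^*\phi_1\|^2\|\alpha\|^2 \;=\; C^{-1}\|T^*\phi\|^2\|\alpha\|^2.
\]
Therefore $T^*\phi\mapsto\langle\phi,\alpha\rangle$ is a well-defined bounded linear functional on $\mathrm{Range}(T^*)$ of norm at most $C^{-1/2}\|\alpha\|$. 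Hahn--Banach together with the Riesz representation theorem then produces $\beta\in L^2(M,\mathcal L)$ with $\|\beta\|^2 \leq C^{-1}\|\alpha\|^2$ and $\langle T^*\phi,\beta\rangle=\langle\phi,\alpha\rangle$ for all admissible $\phi$; that is, $T\beta=\alpha$ in the sense of distributions. Standard elliptic regularity for the operator $\bar\partial^*\bar\partial+\bar\partial\bar\partial^*$, applied to the smooth right-hand side $\alpha$, upgrades $\beta$ to a smooth solution.

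The main obstacle I anticipate is the Andreotti--Vesentini density step, which is exactly where completeness of $g$ is used in an essential way: without it, the integration by parts that produces the Bochner--Kodaira identity on all of $\mathrm{Dom}(T^*)\cap\mathrm{Dom}(S)$ could leave boundary contributions at infinity, and the smooth compactly supported forms need not be dense in $\mathrm{Dom}(T^*)$ in the graph norm. Once the a priori estimate is established on the full domain, the remainder of the argument is a textbook application of Hilbert space functional analysis.
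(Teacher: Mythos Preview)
Your sketch is the standard H\"ormander--Andreotti--Vesentini argument and is correct as outlined: the Bochner--Kodaira--Nakano identity on compactly supported $(0,1)$-forms yields the a priori estimate, completeness pushes it to all of $\mathrm{Dom}(T^*)\cap\mathrm{Dom}(S)$, and the orthogonal decomposition plus Riesz representation produces the solution with the stated bound.

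There is nothing to compare against, however: the paper does not prove this lemma at all. It is quoted as a well-known input with a reference to the literature (Tian's paper on the convergence of Bergman metrics), and the authors move directly to applying it. So your write-up goes well beyond what the paper itself supplies; if anything, in the context of this paper a one-line citation would have sufficed.
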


Fix $k$ large so that the assumption of the Lemma is satisfied in our setting with $M=X\setminus D$ and $\mathcal L=L^k$. For a positive integer $a\leq k^{3/4}$, we apply the Lemma to $\alpha_{i, a}=\bp(\chi_i \tau_a z^a e_i^{\otimes k})$ (where $\tau_a=(\frac{ a^{k-1}}{2\pi(k-2)!})^{1/2}$ is the normalization constant appearing in Lemma 2.1) on $X\setminus D$ and obtain the corresponding $\beta_{i, a}$. Then the section  $s_{i, a}:=\chi_i \tau_a z^a e_i^{\otimes k}-\beta_{i, a}$ is holomorphic over $X\setminus D$, and the $L^2$ integrability condition guarantees that $s_{i, a}$ extends to a section in $\mathcal H_k$.

By our discussion in Section 2, we know $\bp \chi_i$ is supported in the region where $|\tau_a z^a_i e_i^{\otimes k}|_h$ is $\epsilon(k)$ for all $a\leq k^{3/4}$,  and also $\|\chi_i \tau_a z^a e_i^{\otimes k}\|=1+\epsilon(k)$, where we denote by $\|\cdot\|$ the global $L^2$ norm measured with respect to the obvious metrics. So by the estimate in the above Lemma we get $\|s_{i, a}\|^2=1+\epsilon(k)$. Similarly for $1\leq a, b\leq k^{3/4}$ we have 
$$\langle s_{i, a}, s_{j, b}\rangle=\delta_{ab}\delta_{ij}+\epsilon(k), $$
where $\langle\cdot, \cdot\rangle$ denotes the obvious global $L^2$ inner product. 
We should remind the reader that the estimates for the error is of the size $\epsilon(k)$, which is independent of the indices, so  when adding them up we still have the size $\epsilon(k)$.

We may assume  $\{s_{i, a}|i=1, \cdots, d; a\leq k^{3/4}\}$ is orthonormal by possibly applying a linear transformation of the form $I+A$ where, by the remark for Lemma \ref{integral} the entries of $A=(a_{ij})$ satisfy $\sup|a_{ij}|=\epsilon(k)$. Now we let $\{s_{\tilde \gamma}\}$ be an arbitrary orthonormal basis of the orthogonal complement of the span of $\{s_{i, a}|i=1, \cdots, d; a\leq k^{3/4}\}$ in $\mathcal H_k$.

Now one can prove Theorem \ref{thm1-1} using the above chosen basis, based on the arguments of Section 2 and the known asymptotic expansion of Bergman kernel away from the punctures. For readers' convenience we include the details here, but we should point out that the discussion below is essentially straightforward. 

For each $i$, we denote by $V_i$ the subset of $U_i$ consisting of points with $(\log \frac{1}{|z|^2})^{-1}\leq k^{-1/2}\log k$, and by $W_i$ the subset of $U_i$ consisting of points with $(\log \frac{1}{|z|^2})^{-1}\leq k^{-3/8}$. As in Section 2, points in $V_i$ have injectivity radius smaller than $\pi k^{-1/2}\log k$.  
For a point $x$ outside $\bigcup_{i=1}^d V_i$, the usual proof of the Bergman kernel asymptotics goes through, and yield a uniform expansion (in the $C^2$ sense)\footnote{Indeed one can show the error term is $\epsilon(k)$ since $\omega$ has constant curvature, see for example \cite{Donaldson15}.}
\begin{equation} \label{outside}\rho_k(x)=\frac{1}{2\pi}(k-1+O(k^{-1})), 
\end{equation}
and so 
\begin{equation} \label{outside2}
\omega_k=\rho_k \omega(1+k^{-1}+O(k^{-2})). \end{equation}

\begin{lem}\label{sgamma}
	We have the following estimate:
		$$\sup_{\tilde{\gamma}}\sup_i \sup_{z\in W_i}\frac{|s_{\tilde{\gamma}}(z)|^2}{\rho_{k,0}(z)}=\epsilon(k).$$
\end{lem}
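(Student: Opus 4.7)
The plan is to exploit orthogonality of $s_{\tilde\gamma}$ to the quasi-monomial sections $\{s_{i,a}\}_{a \leq k^{3/4}}$ to force the low-degree Taylor coefficients of $s_{\tilde\gamma}$ at each puncture to be tiny, and then to observe that on the very inner region $W_i$ the Bergman kernel is dominated by low-degree monomials, so the residual high-frequency part of $s_{\tilde\gamma}$ can only contribute negligibly.

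Concretely, on $U_i$ expand $s_{\tilde\gamma} = f(z) e_i^{\otimes k}$ with $f(z) = \sum_{a\geq 1} c_a z^a$, and first prove $|c_a| \leq \epsilon(k)\,\tau_a$ uniformly in $a \leq k^{3/4}$. The relation $\langle s_{\tilde\gamma}, s_{i,a}\rangle = 0$ together with $s_{i,a} = \chi_i\tau_a z^a e_i^{\otimes k} - \beta_{i,a}$ and $\|\beta_{i,a}\| = \epsilon(k)$ gives $|\langle s_{\tilde\gamma}, \chi_i\tau_a z^a e_i^{\otimes k}\rangle| \leq \epsilon(k)$. Splitting this inner product into integrals over $U_i$ and over the cutoff-transition region, the $S^1$-invariance of $e^{-k\Phi_0}\omega_0$ combined with the concentration of $|z|^{2a}e^{-k\Phi_0}\omega_0$ at $|z|^2 \sim e^{-k/a} \leq e^{-k^{1/4}}$ (deep inside $U_i$) identifies the $U_i$ integral with $\overline{c_a}\tau_a^{-1}(1+\epsilon(k))$, while the transition-region contribution is $\epsilon(k)$ by Cauchy--Schwarz against the negligible $L^2$ mass of $\tau_a z^a e_i^{\otimes k}$ outside $U_i$.

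Now split $f = f_{\mathrm{low}} + f_{\mathrm{high}}$ at $a = k^{3/4}$. For the low-frequency piece, Cauchy--Schwarz with the coefficient bound above yields
$$|f_{\mathrm{low}}(z)|^2 e^{-k\Phi_0(z)} \leq k^{3/4}\sum_{a \leq k^{3/4}} |c_a|^2 |z|^{2a} e^{-k\Phi_0} \leq \epsilon(k)^2 k^{3/4}\,\rho_{k,0}(z) = \epsilon(k)\,\rho_{k,0}(z).$$
For the high-frequency piece, Cauchy--Schwarz with the weights $\tau_a^{\pm 1}$ together with a Parseval-type bound on $\sum_a |c_a|^2 \tau_a^{-2}$ (obtained by comparing the $L^2$ norm on $U_i$ to $\|s_{\tilde\gamma}\|^2 = 1$ for $a$ in the regime where $|z|^{2a}e^{-k\Phi_0}\omega_0$ still concentrates in $U_i$, and using Cauchy's integral formula on a shell inside $U_i$ for the very high $a$ where the monomial mass leaks past $\partial U_i$) reduces the problem to bounding the tail $\sum_{a > k^{3/4}} \tau_a^2|z|^{2a}e^{-k\Phi_0}$ by $\epsilon(k)\,\rho_{k,0}(z)$ on $W_i$. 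Writing the generic term as $e^{G(a)}/(2\pi(k-2)!)$ with $G(a) = (k-1)\log a - av$ and $v = \log(1/|z|^2) \geq k^{3/8}$, the function $G$ is maximized at $a_* = (k-1)/v \leq k^{5/8}$ and strictly decreasing beyond $k^{3/4}$; setting $t = k^{3/4}/a_* \geq k^{1/8}$, the elementary identity $G(k^{3/4}) - G(a_*) = -(k-1)(t-1-\log t) \lesssim -k^{9/8}$, combined with the geometric-series bound on the successive ratios $(1+1/a)^{k-1}e^{-v} \leq e^{k^{1/4}-k^{3/8}}$, yields the required estimate.

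The principal obstacle is the coefficient bound in the second paragraph: the several $\epsilon(k)$ contributions (from the $\beta_{i,a}$ correction, the cutoff transition, and the mass leakage of $|z|^{2a}e^{-k\Phi_0}\omega_0$ past $\partial U_i$) must all be controlled \emph{uniformly} in $a \leq k^{3/4}$ and in the puncture index $i$. Once this is in hand, the high-frequency tail estimate is essentially a bookkeeping exercise on the Laplace-type framework already set up in Section 2.
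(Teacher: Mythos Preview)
Your overall architecture matches the paper's proof closely: expand $s_{\tilde\gamma}$ in monomials on $U_i$, use orthogonality to $\{s_{i,a}\}_{a\le k^{3/4}}$ to force the low-degree coefficients to be $\epsilon(k)$, and then treat low and high frequencies separately. The low-frequency estimate is correct and coincides with the paper's equation~(3.1); your tail bound $\sum_{a>k^{3/4}}\tau_a^2|z|^{2a}e^{-k\Phi_0}=\epsilon(k)\,\rho_{k,0}(z)$ on $W_i$ is exactly the content of the paper's Claim~4.

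There is, however, a genuine gap in the high-frequency step. Your Cauchy--Schwarz with weights $\tau_a^{\pm1}$ produces the factor $\sum_{a>k^{3/4}}|c_a|^2\tau_a^{-2}$, which is formally the squared $\mathcal H_{k,0}$-norm of $f_{\mathrm{high}}$ on the \emph{full} punctured disc $\mathbb D^*$. But $f$ is only the local representative of $s_{\tilde\gamma}$ on $\{|z|<R\}$, and its Taylor series need have radius of convergence no larger than $R$. Cauchy's estimate on a shell near $|z|=R'$ does give $|c_a|\le M(R')^{-a}$ with $M$ bounded independently of $k$ (once one arranges $R<e^{-1}$ so that the weight $(\log|z|^{-2})^k$ is at least $1$ there), but then $|c_a|^2\tau_a^{-2}\sim M^2(R')^{-2a}(k-2)!/a^{k-1}$; the ratio of successive terms is about $(R')^{-2}e^{-(k-1)/a}$, which exceeds $1$ once $a>-{(k-1)}/{(2\log R')}$, so the series diverges. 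The very-high-$a$ block therefore cannot be absorbed into the single Cauchy--Schwarz inequality you describe, and the ``Parseval-type bound on $\sum_a |c_a|^2\tau_a^{-2}$'' is not available.

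The paper avoids this by splitting the high range once more at $a\sim -k/(2\log R)$. On the middle block, where the monomials still concentrate inside $U_i$, the Parseval comparison you describe does work and gives $|c_a|\le\sqrt3\,\tau_a$ (Claim~2), which together with Claim~4 yields~(3.3). On the very high block the paper abandons the weighted Cauchy--Schwarz and instead bounds the \emph{pointwise} ratio $|c_a z^a|/|\tau_b z^b|$ on $W_i$ against a fixed reference degree $b=k^{5/8}$ (Claim~3): the Cauchy bound on $|c_a|$ combined with $|z|\le e^{-k^{3/8}/2}$ on $W_i$ gives geometric decay in $a$, which sums to $\epsilon(k)$ relative to a single term of $\rho_{k,0}$. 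Your argument is easily repaired along the same lines: keep the $\tau_a$-weighted Cauchy--Schwarz for $a\in[k^{3/4},-k/(2\log R)]$, and for $a>-k/(2\log R)$ estimate $\sum_a|c_a z^a|\le M\sum_a(|z|/R')^a$ directly on $W_i$.
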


\begin{proof}
Fix $i$, within $U_i$ we can write $s_{\tilde\gamma}=f_{\tilde\gamma}e_i^{\otimes k}$ for a holomorphic function $f_{\tilde\gamma}$. Let $f_{\tilde{\gamma}}=\sum c_a\tau_az^a$ be the Taylor expansion around $p_i$. We are interested in the estimates of various quantities for large $k$, so the estimates below will always be understood for $k$ sufficiently large, but are independent of $\tilde\gamma$ and $i$.

\

\textbf{Claim 1.} For all $a\leq k^{3/4}$ we have $|c_a|=\epsilon(k)$. 

To prove this, we use the fact that $s_{\tilde{\gamma}}$ is $L^2$ orthogonal to $s_{i, a}$. Since $s_{i, a}=\chi_i \tau_a z^a e_i^{\otimes k}+\beta_{i, a}$ with $\|\beta_{i, a}\|=\epsilon(k)$, and again by the discussion of Section 2 we know $\int_{U_i} |\tau_a z^ae_i^{\otimes k}|^2\omega=1+\epsilon(k)$ (since the integral is concentrated in the annulus $|t-\frac{k-2}{a}|\leq \frac{k^{1/2}\log k}{a}$, where we adopt the notation of Section 2 to denote $t=\log \frac{1}{|z|^2}$), it is then easy to obtain the conclusion. 

\

\textbf{Claim 2.} For all $a \leq -\frac{k}{2\log R}$, we have $|c_a|^2\leq 3$ for large $k$. 

This follows from similar consideration as above, using the fact that $$\int_{U_i} |\tau_az^ae_i^{\otimes k}|^2\omega\geq 1/3$$ for $a$ in this range. 

\

Now we define a function $q_{a}(z)=|\frac{c_a\tau_az^a}{\tau_bz^b}|^2$, where $b=k^{5/8}$. Denote $\gamma_a=q_a(z)$ when $\log |z|^2=-k/b$.

\

\textbf{Claim 3}. For all $a \geq -\frac{k}{2\log R}$, we have 
$\gamma_a=O(e^{-\frac{1}{3}ak^{3/8}}).$

 As we have seen in the local calculation in Section 2, for $k$ large we have
 $$\int_{\frac{k}{b}-\frac{k^{1/2}\log k}{b}\leq \log \frac{1}{|z|^2}\leq \frac{k}{b}} |\tau_b z^b e_i^{\otimes k}|^2\omega\geq 1/3. $$
 We denote the annulus $\{z|\frac{k}{b}-\frac{k^{1/2}\log k}{b}\leq \log \frac{1}{|z|^2}\leq \frac{k}{b}\}$ by $A_b$. 
 
 For $a\geq k^{3/4}$, when $k$ is large we have $a>b$ hence $q_a$ is increasing in $|z|$, hence we have 
 $$\int_{A_b} |c_a\tau_a z^a e_i^{\otimes k}|^2\omega=\int_{A_b} q_a(z)|\tau_b z^b e_i^{\otimes k}|^2\omega\geq \frac{1}{3}\gamma_a.$$
 On the other hand, we have 
 $$\int_{A_b} |z^ae_i^{\otimes k}|^2\omega=2\pi \int_{\frac{k}{b}-\frac{k^{1/2}\log k}{b}}^{\frac{k}{b}} e^{(k-2)\log t-at}dt\leq e^{-\frac{1}{2}ak^{3/8}}.$$
Therefore,
$$\gamma_a\leq |c_a\tau_a|^2e^{-ak^{3/8}/2}.$$
Now since $\|s_{\tilde \gamma}\|=1$, we have 
$$\int_{|z|\leq R} |c_a\tau_az^ae_i^{\otimes k}|^2\omega\leq 1.$$
Notice
$$\int_{|z|\leq R} |z^a e_i^{\otimes k}|^2\omega=2\pi \int_{-2\log R}^\infty e^{(k-2)\log t-at}dt\geq e^{-a(1-2\log R)+k\log (1-2\log R)},$$
so we obtain 
$$|c_a\tau_a|^2\leq e^{a(1-2\log R)-k\log (1-2\log R)}, $$
Hence 
$$\gamma_a\leq e^{-\frac{1}{2}ak^{3/8}+a(1-2\log R)-k\log (1-2\log R)}\leq e^{-\frac{1}{3}ak^{3/8}}, $$
and \textbf{Claim 3} is proved. 


\

Therefore, for $z\in W_i$, we have by \textbf{Claim 1} that 
\begin{equation}\label{eqn3-1}
\frac{|\sum_{a\leq k^{3/4}} c_a\tau_az^a|^2}{\sum_a |\tau_a z^a|^2}=\epsilon(k), 
\end{equation}
and by \textbf{Claim 3}
\begin{equation}\label{eqn3-2}
\frac{|\sum_{a\geq -\frac{k}{2\log R}} c_a\tau_az^a|^2}{\sum_a |\tau_a z^a|^2}\leq( \sum_{a\geq -\frac{k}{2\log R}}e^{-ak^{3/8}/6})^2=\epsilon(k). 
\end{equation}

For $a\in [k^{3/4}, -\frac{k}{2\log R}]$, we first notice

\

\textbf{Claim 4}. For all $a\geq k^{3/4}$ and all $z\in W_i$ we have 
$$\frac{|\tau_a z^a|^2}{\sum_{r\geq 1}|\tau_r z^r|^2}=\epsilon(k). $$ 
The proof of this follows from similar discussion as in Section 2, the main point being that for $z\in W_i$, the main contribution to the sum in the denominator comes from terms where $r$ is around $(\log \frac{1}{|z|^2})^{-1}k\leq k^{5/8}$, which is much smaller than $k^{3/4}$ when $k$ is large. To be more precise, we can write $|\tau_a z^a|^2=a^{k-1}|z|^{2a}=e^{(k-1)\log a+a\log |z|^2}$. Let $P(y)=(k-1)\log y+\log |z|^2 y$, it is easy to see that when $z\in W_i$ and $y\geq k^{3/4}-1$, $P$ is concave and decreasing in $y$, hence we have 
$$P(a)-P(a-1)\leq P'(a-1)\leq -k^{1/2}.$$
From this \textbf{Claim 4} follows easily.

\

By \textbf{Claim 2} and \textbf{Claim 4} we also have 
\begin{equation}\label{eqn3-3}
\frac{|\sum_{k^{3/4}\leq a\leq -\frac{k}{2\log R}} c_a\tau_az^a|^2}{\sum_a |\tau_a z^a|^2}=\epsilon(k). 
\end{equation}

The conclusion of the Lemma then follows from the combination of (\ref{eqn3-1}), (\ref{eqn3-2}) and (\ref{eqn3-3}). 
%
\end{proof}
As a consequence,  we obtain the following lemma, which essentially shows the ``localilty" of Bergman kernel in a neighborhood of the hyperbolic cusp.
 
\begin{lem} \label{Lem3-3}
We have 
\begin{equation}\label{rhok}
\sup_{i}\sup_{z\in W_i} |\frac{\rho_k(z)}{\rho_{k, 0}(z)}-1|=\epsilon(k).
\end{equation}
\end{lem}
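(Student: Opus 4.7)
The plan is to split the Bergman kernel according to the almost-orthonormal basis $\{s_{i,a}\}_{i,\,a\leq k^{3/4}}\cup\{s_{\tilde\gamma}\}$ and match each piece with the corresponding part of $\rho_{k,0}$ on $W_i$. Fix $z\in W_i$ and write
$$\rho_k(z)=\sum_{a\leq k^{3/4}}|s_{i,a}(z)|^2_h+\sum_{j\neq i,\,a\leq k^{3/4}}|s_{j,a}(z)|^2_h+\sum_{\tilde\gamma}|s_{\tilde\gamma}(z)|^2_h.$$
The last sum is controlled by $\epsilon(k)\rho_{k,0}(z)$ directly from Lemma \ref{sgamma}.

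For the main (diagonal) sum I use that $\chi_i\equiv 1$ on $W_i$, so $s_{i,a}(z)=\tau_a z^a e_i^{\otimes k}(z)-\beta_{i,a}(z)$. Expand $\beta_{i,a}|_{U_i}=\sum_{n\geq 1}c^{i,a}_n z^n e_i^{\otimes k}$ (the expansion starts at $n=1$ because $L^2$ integrability with the cusp weight forces the constant term to vanish). Orthogonality of monomials on the model side combined with Cauchy--Schwarz gives
$$|\beta_{i,a}(z)|^2_h\leq\Big(\sum_n |c^{i,a}_n|^2/\tau_n^2\Big)\,\rho_{k,0}(z).$$
The prefactor is comparable to the restricted norm $\|\beta_{i,a}|_{U_i}\|^2\leq\|\beta_{i,a}\|^2=\epsilon(k)$, provided $\|z^n e_i^{\otimes k}\|^2_{U_i}\asymp 1/\tau_n^2$ in the dominant range of $n$; a substitution $t=-2\log|z|$ shows this holds for all $n\lesssim k/c_R$, which comfortably covers what we need. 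Summing over $a\leq k^{3/4}$ and controlling cross terms by another Cauchy--Schwarz,
$$\sum_{a\leq k^{3/4}}|s_{i,a}(z)|^2_h=\sum_{a\leq k^{3/4}}|\tau_a z^a e_i^{\otimes k}(z)|^2_h+\epsilon(k)\rho_{k,0}(z).$$
The remaining tail $\sum_{a>k^{3/4}}|\tau_a z^a e_i^{\otimes k}(z)|^2_h$ is again $\epsilon(k)\rho_{k,0}(z)$ on $W_i$: since $t_0:=\log(1/|z|^2)\geq k^{3/8}$, the dominant index in $\rho_{k,0}$ is $a^*\approx k/t_0\leq k^{5/8}$, and the Gaussian-type concentration of $\tau_a^2|z|^{2a}$ around $a^*$ (the same estimate as Claim 4 in the proof of Lemma \ref{sgamma}) kills all terms with $a>k^{3/4}$.

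The cross-puncture sum is handled identically: since $U_i\cap U_j=\emptyset$ for $j\neq i$, we have $s_{j,a}|_{U_i}=-\beta_{j,a}|_{U_i}$, which is $L^2$-small, holomorphic, and vanishes at $p_i$; the same Cauchy--Schwarz argument yields $|s_{j,a}(z)|^2_h\leq\epsilon(k)\rho_{k,0}(z)$, and summing over the finitely many $(j,a)$ preserves the $\epsilon(k)$ bound because $\epsilon(k)$ beats any polynomial factor. Combining the three pieces yields $\rho_k(z)=(1+\epsilon(k))\rho_{k,0}(z)$ uniformly on $\bigcup_i W_i$, proving the lemma. The main technical obstacle is the comparability $\|z^n e_i^{\otimes k}\|^2_{U_i}\asymp 1/\tau_n^2$ on the $n$-range that carries the mass of $\rho_{k,0}(z)$; everything else then reduces to bookkeeping around Cauchy--Schwarz and the tail concentration already established in Section~2.
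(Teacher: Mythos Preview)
Your overall decomposition matches the paper's: invoke Lemma~\ref{sgamma} for the $s_{\tilde\gamma}$ sum, and reduce the remaining pieces to the pointwise bound $|\beta_{i,a}(z)|^2_h/\rho_{k,0}(z)=\epsilon(k)$ on $W_i$. The difference, and the gap, is in how you establish this last bound.

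Your Cauchy--Schwarz inequality
\[
|\beta_{i,a}(z)|^2_h\;\leq\;\Big(\sum_{n\geq 1} |c^{i,a}_n|^2/\tau_n^2\Big)\,\rho_{k,0}(z)
\]
is correct, but the prefactor is exactly the $L^2(\mathbb D^*)$-norm squared of the Taylor series, whereas $\beta_{i,a}$ is only holomorphic on $U_i=\{|z|<R\}\subsetneq\mathbb D^*$. The comparability $\|z^n e_i^{\otimes k}\|^2_{U_i}\asymp 1/\tau_n^2$ does hold for $n\leq N_0:=-k/(2\log R)$, but for $n>N_0$ the $L^2$-mass of $z^n$ concentrates near $|z|^2=e^{-k/n}>R^2$, hence outside $U_i$, and $\|z^n\|^2_{U_i}\ll 1/\tau_n^2$. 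Consequently the terms $|c^{i,a}_n|^2/\tau_n^2$ with $n>N_0$ are \emph{not} controlled by $\|\beta_{i,a}|_{U_i}\|^2$, and nothing you have said bounds them. Your phrase ``in the dominant range of $n$ \dots\ which comfortably covers what we need'' conflates the range dominating $\rho_{k,0}(z)$ on $W_i$ (indeed $n\lesssim k^{5/8}$) with the range that could dominate the prefactor; a priori the latter could sit entirely in the tail $n>N_0$, and the sum could even diverge.

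The paper closes exactly this gap by a different route. It observes that Claims~2 and~3 in the proof of Lemma~\ref{sgamma} used only an $L^2$ bound on the section, so they apply verbatim to the Taylor coefficients of $\beta_{i,a}$ (now with the extra factor $\|\beta_{i,a}\|=\epsilon(k)$). In particular Claim~3 gives the decay $\gamma_n=O(e^{-nk^{3/8}/3})$ for $n>N_0$, and the same three-range pointwise argument (\ref{eqn3-1})--(\ref{eqn3-3}) then yields $|\beta_{i,a}|^2/\rho_{k,0}=\epsilon(k)$ on $W_i$. Your argument can be repaired by splitting the Taylor series at $N_0$, handling $n\leq N_0$ with your Cauchy--Schwarz, and invoking Claim~3 for the high-frequency piece; but the single Cauchy--Schwarz as written does not close on its own.
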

\begin{proof}
	Given Lemma \ref{sgamma}, we only need to show for all $a\leq k^{3/4}$ and $z\in W_i$, 
	\begin{equation} \label{eqn3-5}
	\rho_{k, 0}(z)^{-1}|\beta_{i, a}(z)|^2=\epsilon(k). 
	\end{equation}
In the above local coordinate we can write $\beta_{i, a}=\sum_a c_a\tau_az^ae_i^{\otimes k}$. Notice we have $\|\beta_{i, a}\|_{L^2}=\epsilon(k)$, hence $\langle\beta_{i, a}, s_{i, a}\rangle=\epsilon(k)$, so one can see that the same estimates for $c_a\tau_a$ in the proof of Lemma 3.2 also holds here, and we obtain the conclusion. 	
\end{proof}
\begin{cor}
We have
\begin{equation}\label{eqn3-8}\sup_{\tilde{\gamma}}\sup_i[\int_{V_i}\frac{|s_{\tilde{\gamma}}|^2}{\rho_k}\omega_k+\int_{V_i} |s_{\tilde\gamma}|^2\omega]=\epsilon(k)
\end{equation}
\end{cor}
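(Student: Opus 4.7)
The plan is to observe that both integrals fall to a single mechanism: reduce the integrand to a uniform pointwise $\epsilon(k)$ bound on $V_i \subset W_i$ via Lemmas~\ref{sgamma} and~\ref{Lem3-3}, pull that factor out of the integral, and then absorb the remaining global integrals ($\int_X \omega_k$ and $\int_X \rho_k \omega$) into $\epsilon(k)$ on the grounds that they are polynomial in $k$.

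For the first term, I would chain the two lemmas to obtain
\[
\frac{|s_{\tilde\gamma}(z)|^2}{\rho_k(z)} = \frac{|s_{\tilde\gamma}(z)|^2}{\rho_{k,0}(z)} \cdot \frac{\rho_{k,0}(z)}{\rho_k(z)} = \epsilon(k)
\]
uniformly for $z \in V_i$, $\tilde\gamma$, and $i$. Pulling this factor out and enlarging the integration domain from $V_i$ to $X$ gives
\[
\int_{V_i} \frac{|s_{\tilde\gamma}|^2}{\rho_k} \omega_k \leq \epsilon(k) \int_X \omega_k = \epsilon(k)\cdot kl = \epsilon(k),
\]
where I use the identity $2\pi \omega_k = k\omega + i\partial\bar\partial \log \rho_k$ to see that $[\omega_k] = k c_1(L)$ in cohomology, so $\int_X \omega_k = kl$.

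For the second term, Lemma~\ref{sgamma} gives $|s_{\tilde\gamma}|^2 \leq \epsilon(k)\rho_{k,0}$ on $V_i$, and Lemma~\ref{Lem3-3} then lets me replace $\rho_{k,0}$ by $2\rho_k$ for $k$ large, so
\[
\int_{V_i} |s_{\tilde\gamma}|^2 \omega \leq 2\epsilon(k) \int_X \rho_k \omega = 2\epsilon(k)(N_k+1),
\]
and $N_k+1 = \dim \mathcal H_k = kl - d - g + 1$ is polynomial in $k$ by Riemann-Roch, again yielding $\epsilon(k)$. Taking the supremum over $\tilde\gamma$ and $i$ does not spoil the bound because both Lemmas~\ref{sgamma} and~\ref{Lem3-3} are already uniform in these indices.

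I do not anticipate any serious obstacle here: all the substantive work has been done in the two preceding lemmas, and this corollary is essentially a bookkeeping consequence. The only routine verification needed is that $\int_X \omega_k$ and $\int_X \rho_k \omega$ grow at most polynomially, which is immediate from the definitions together with Riemann-Roch.
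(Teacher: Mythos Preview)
Your proof is correct and follows exactly the paper's approach: pull out the pointwise $\epsilon(k)$ bound on $V_i\subset W_i$ via Lemmas~\ref{sgamma} and~\ref{Lem3-3}, then control the leftover integrals $\int_X\omega_k$ and $\int_X\rho_k\omega$ by $O(k)$ quantities. One minor slip: $\int_X\omega_k = kl-d$, not $kl$, since the identity $2\pi\omega_k = k\omega + i\partial\bar\partial\log\rho_k$ holds only as smooth forms on $X\setminus D$ and $\rho_k$ vanishes to first order along $D$; this is harmless, as only the $O(k)$ bound matters (and the paper itself just writes $\int_X\omega_k=O(k)$).
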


\begin{proof}
This is straightforward, just by noticing that $\int_{V_i}\omega_k\leq \int_X\omega_k=O(k)$, and $\int_{V_i}\rho_k\omega\leq \int_X\rho_k\omega=\dim H^0(X, L^k)=O(k)$.
\end{proof}

Using (\ref{outside}), (\ref{outside2}) and (\ref{eqn3-8}), we get
\begin{eqnarray} \label{eqn3-9}
\mu_X(\tilde\gamma, \tilde\gamma)&=&\int_{X\setminus \bigcup_i V_i} |s_{\tilde\gamma}|^2 (1+k^{-1}+O(k^{-2}))\omega+\int_{\bigcup_i V_i}|s_{\tilde\gamma}|^2\rho_k^{-1}\omega_k\\
&=& 1+k^{-1}+O(k^{-2}). \nonumber
\end{eqnarray}

\

Now for $\alpha=(i, a)$ with $a\geq 2k^{1/2}\log k$, we claim 

\begin{equation}
\sup_{z\in V_i} \frac{|\tau_a z^a|^2}{\rho_{k,0}(z)}=\epsilon(k). 
\end{equation}

Indeed, this follows from similar argument as in the proof of \textbf{Claim 4} in Lemma \ref{sgamma}. The point is that the function $P(y)$ is also concave and decreasing when $z\in V_i$ and $y\geq 2k^{1/2}\log k$, and we have $P'(y-1)\leq -\frac{1}{2}k^{1/2} (\log k)^{-1}$. So together with (\ref{rhok}) and (\ref{eqn3-5}) we also have 

$$\sup_{i}\sup_{ a\in [2k^{1/2}\log k, k^{3/4}]}(\int_{V_i} \frac{|s_{i, a}|^2}{\rho_{k, 0}}\omega_k+\int_{V_i}|s_{i, a}|^2\omega)=\epsilon(k). $$
Hence in this case we also obtain
\begin{equation} \label{eqn3-11}
\mu_X(\alpha, \alpha)= 1+k^{-1}+O(k^{-2}). 
\end{equation}

\
 
\begin{lem}
In $W_i$, we have
	\begin{equation}
		\omega_k-\omega_{k,0}=\epsilon(k)\omega, 
	\end{equation}
	where $\omega$ is the hyperbolic metric on $X$. 
\end{lem}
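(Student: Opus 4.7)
The plan is to upgrade the pointwise ratio estimate from Lemma \ref{Lem3-3} to a $C^2$-estimate in the hyperbolic metric. Since $\omega=\omega_0$ on $U_i$, in the trivialization $e_i^{\otimes k}$ we have $\omega_k - \omega_{k,0} = \tfrac{1}{2\pi}i\partial\bar\partial\log(F/F_0)$, where $F = \sum_\alpha|f_\alpha|^2$ and $F_0=\sum_a\tau_a^2|z|^{2a}$. Setting $u = \log(F/F_0)$, the desired estimate becomes $|z|^2 t^2|u_{z\bar z}|=\epsilon(k)$ uniformly on $W_i$.

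The key tool is a bi-holomorphic extension of the Bergman kernels. Define $K(z,\bar w)=\sum_\alpha f_\alpha(z)\overline{f_\alpha(w)}$ and $K_0(z,\bar w)=\sum_a\tau_a^2 z^a\bar w^a$, both holomorphic in $(z,\bar w)$ and restricting to $F$, $F_0$ on the anti-diagonal $w=z$. The ratio $\psi(z,\bar w) = \log(K/K_0)$ is bi-holomorphic where defined, with $\psi(z,\bar z) = u(z)$; by the chain rule $u_{z\bar z}(z_0) = \partial_z\partial_{\bar w}\psi(z_0,\bar z_0)$, so Cauchy's formula applied on a bi-disc $\{|z-z_0|<r,\,|w-z_0|<r\}$ yields $|u_{z\bar z}(z_0)|\leq C r^{-2}\sup|\psi|$.

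To bound $\sup|\psi|$, first establish $|K-K_0|(z,\bar w) \leq \epsilon(k)\sqrt{F_0(z)F_0(w)}$ by decomposing $K-K_0$ into sums arising from the auxiliary sections $s_{\tilde\gamma}$, the correction holomorphic functions $\tilde\beta_{i,a}$, and the tail monomials $\tau_a z^a$ with $a>k^{3/4}$. For each summand $\sum g_\alpha(z)\overline{g_\alpha(w)}$, Cauchy--Schwarz bounds it in modulus by $\bigl(\sum|g_\alpha(z)|^2\bigr)^{1/2}\bigl(\sum|g_\alpha(w)|^2\bigr)^{1/2}$, and the arguments used in the proofs of Lemmas \ref{sgamma} and \ref{Lem3-3} show each diagonal factor is $\epsilon(k)\cdot F_0$ pointwise. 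Second, prove the matching lower bound $|K_0(z,\bar w)| \geq \tfrac12\sqrt{F_0(z)F_0(w)}$ by choosing the Euclidean radius $r = c\,|z_0|\,t_0/k$: the series $K_0(z,\bar w) = \sum\tau_a^2(z\bar w)^a$ is dominated by terms near $a^*\sim k/t_0$, and this choice of $r$ keeps the phase $a^*\arg(z\bar w)$ bounded on the bi-disc, preventing significant cancellation. Combining these, $|\psi|=\epsilon(k)$ on the bi-disc, so $|u_{z\bar z}(z_0)|\leq\epsilon(k)\,k^2/(|z_0|^2 t_0^2)$, and multiplying by $|z_0|^2 t_0^2$ produces the required $\epsilon(k)$.

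This argument covers the regime $k^{3/8}\leq t_0 \leq k$. For $t_0\geq k$ (extremely close to the puncture), only the $a=1$ monomial contributes meaningfully, the image of the embedding lies in a bounded region of the affine chart, and direct computation shows $|\omega_k|_\omega$ and $|\omega_{k,0}|_\omega$ are each bounded by $C\,|z_0|^2 t_0^2 = C\,t_0^2 e^{-t_0}$, which is exponentially small in $k$; the triangle inequality then concludes. The main technical obstacle is the balancing of the Cauchy scale $r$: small enough that the off-diagonal kernel $K_0$ does not cancel, yet large enough that the factor $r^{-2}$ in the Cauchy bound is absorbed; the flexibility comes from the fact that $\epsilon(k)$ decays faster than any polynomial in $1/k$, so polynomial losses in $k$ are harmless.
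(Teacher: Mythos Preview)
Your approach is correct and genuinely different from the paper's. The paper writes $\rho_k/\rho_{k,0}=1+F_k$, expresses $F_k$ as a sum of ratios $E=|{\sum c_a\tau_az^a}|^2\big/\sum|\tau_az^a|^2$, and then estimates each piece of the quotient-rule expansion of $\partial\bar\partial E$ and $\partial E\wedge\bar\partial E$ term by term, re-running the coefficient bounds from Lemma \ref{sgamma} on the differentiated series. Your route---polarizing to the off-diagonal kernel $K(z,\bar w)$, bounding $\log(K/K_0)$ via Cauchy--Schwarz on the error pieces plus a non-cancellation lower bound on $K_0$, and then invoking the two-variable Cauchy estimate with scale $r\sim c|z_0|t_0/k$---is more conceptual: it converts the $C^0$ control of Lemma \ref{Lem3-3} into $C^2$ control without ever differentiating the individual series. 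The cost is the extra work verifying the lower bound $|K_0(z,\bar w)|\geq \tfrac12\sqrt{F_0(z)F_0(w)}$, which uses both the phase bound $a^*\cdot\arg(z\bar w)=O(c)$ and a near-equality in Cauchy--Schwarz coming from the concentration of $\sum\tau_a^2|z|^{2a}$ around $a^*$; both ingredients are available from the Section 2 analysis.

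One imprecision to fix: in your treatment of $t_0\geq k$ you assert $|\omega_k|_\omega,\ |\omega_{k,0}|_\omega\leq C\,t_0^2e^{-t_0}$ with an absolute $C$. In fact the Euclidean density of $\omega_{k,0}$ (and of $\omega_k$) near the puncture is of order $2^k$, coming from the $a=2$ term; the correct bound is $C\,2^k t_0^2 e^{-t_0}$. This does not affect the conclusion, since $\sup_{t_0\geq k}2^k t_0^2 e^{-t_0}=k^2(2/e)^k=\epsilon(k)$, but you should record the $k$-dependence of the constant. Alternatively, you can avoid the separate case altogether: pass to $\tilde K=K/(z\bar w)$ and $\tilde K_0=K_0/(z\bar w)$, which extend holomorphically across $z=0$ and $w=0$, and run your Cauchy argument on the bidisc $\{|z|,|w|<2e^{-k/2}\}$; there $\tilde K_0\approx\tau_1^2$ is bounded below, $|\tilde\psi|=\epsilon(k)$, and the resulting bound $|u_{z\bar z}|\leq\epsilon(k)e^{k}$ combines with $|z_0|^2t_0^2\leq k^2e^{-k}$ for $t_0\geq k$ to give the desired $\epsilon(k)$.
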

\begin{proof}	
	
	Write $\rho_{k, 0}^{-1}\rho_k=1+F_k$, then as shown in the proof  of Lemma \ref{Lem3-3}, we have the pointwise estimate $F_k=\epsilon(k)$ and $F_k$ can be written as the sum of contributions from $s_{\tilde\gamma}$ and $\beta_{i, a}$. In local coordinates, both types of error terms are of the form $E(z)=\frac{|\sum_{a\geq 1} c_a\tau_a z^a|^2}{\sum_{a\geq 1}|\tau_az^a|^2}$, where as shown above, the $c_a$ satisfies the estimates in the proof of Lemma \ref{sgamma}. 
	
	Notice $\omega_k-\omega_{k,0}=-i\p\bp\log (1+F_k)$, so it suffices to estimate $\p\bp F_k$ and $\p F_k\wedge \bp F_k$ using $\omega$. Then it is further reduced to show the following
$$\p\bp E=\epsilon(k)\omega, \p E\wedge \bp E =\epsilon(k)\omega. $$
These can be proved in the same way as in the proof of Lemma \ref{sgamma}. 
%
%
%
For simplicity, we denote $f=\sum_{a\geq 1} c_a\tau_a z^{a-1}$, and $g=\sum_{a\geq 1}\tau_a z^{a-1}$. Then $E=\frac{f}{g}$, and
\begin{eqnarray*}
	\partial \bar{\partial}E&=&\frac{\partial \bar{\partial}f}{g}-\frac{\partial g\wedge \bar{\partial}f+\partial f\wedge \bar{\partial}g+f\partial  \bar{\partial}g}{g^2}+\frac{2f\partial g\wedge \bar{\partial}g}{g^3}\\
			\partial E\wedge\bar{\partial}E&=&\frac{\partial f\wedge \bar{\partial}f}{g^2}-\frac{f\partial f\wedge \bar{\partial}g+f\partial g\wedge \bar{\partial}f}{g^3}+\frac{f^2\partial g\wedge \bar{\partial}g}{g^4}
\end{eqnarray*}

One can show every single term in the above is indeed $\epsilon(k)\omega$. 
For example we will treat the first term in $\p\bp E$. Notice $\p\bp f=|\sum_{a\geq 2}(a-1)c_{a}\tau_{a}z^{a-2}|^2dzd\bar z$, and $\omega=\frac{1}{|z|^2(\log |z|^2)^2}dzd\bar z$. So if $\log |z|^2\geq-k$, we have $(\log|z|^2)^2=O(k^2)$, so we obtain as in the proof of Lemma \ref{sgamma} that 
$$\frac{\p\bp f}{g\omega}=\epsilon(k). $$ When $\log |z|^2\leq -k$, by the discussion of Section 2 we know $\tau_1^2\leq g\leq 2\tau_1^2$, and 
$$|\sum_{a\geq 2} (a-1)c_a\tau_a z^{a-2}|^2\leq 2\tau_2^2\epsilon_k+2|\sum_{a\geq 3}(a-1)c_a\tau_az^{a-2}|^2.$$
Notice $|\tau_2|^2/|\tau_1|^2=2^{k-1}$, and using the convexity similar to the proof of \textbf{Claim 4} in Lemma \ref{sgamma}, we can see for $a\geq 3$, $|\tau_az^{a-2}|^2/|\tau_1|^2\leq e^{-(a-1)}$. Since $|z|^2\leq e^{-k}$, we get 
$$\frac{\p\bp f}{g\omega}=\epsilon(k). $$
The estimates for the other terms follow similarly. 
\end{proof}

\begin{rmk}
With more work, it is possible to get higher order derivative estimates for the error term $\rho_{k}-\rho_{k, 0}$, but these are not needed for our current purpose in this paper. 
\end{rmk}

\begin{lem}
	For $\alpha=(i, a)$ with $a\leq 2 k^{1/2}\log k$, we have
\begin{equation}
\mu_X(\alpha, \alpha)=\int_{W_i} \frac{|\tau_{ a}z^a|^{2}}{\rho_{k, 0}}\omega_{k, 0}+\epsilon(k)
\end{equation}
\end{lem}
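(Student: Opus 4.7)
The target identity says $\mu_X(\alpha,\alpha)=\int_X|s_{i,a}|^2\rho_k^{-1}\om_k$ equals the model integral $\int_{W_i}|\tau_a z^a|^2\rho_{k,0}^{-1}\om_{k,0}$ up to $\ep(k)$. The plan is to split $X = W_i \cup (X\setminus W_i)$ and establish two things: first, $\int_{X\setminus W_i}|s_{i,a}|^2\rho_k^{-1}\om_k = \ep(k)$; second, on $W_i$ one may replace $s_{i,a}$ by $\tau_a z^a e_i^{\otimes k}$ and $\rho_k^{-1}\om_k$ by $\rho_{k,0}^{-1}\om_{k,0}$ up to an $\ep(k)$ error. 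The crucial input is that for $a \leq 2k^{1/2}\log k$ the mass of $\tau_a z^a$ concentrates, by (\ref{eqn2-2-2}), on the annulus $|t-(k-2)/a|\leq k^{1/2}\log k/a$, which sits well inside $V_i\subset W_i$.

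For the exterior piece, write $X\setminus W_i = (X\setminus\bigcup_j V_j)\cup \bigcup_{j\neq i}V_j$, using $V_i\subset W_i$. On the first region the uniform expansion (\ref{outside})--(\ref{outside2}) gives $\rho_k^{-1}\om_k=(1+O(k^{-1}))\om$, so the contribution is bounded by $\int_{X\setminus W_i}|s_{i,a}|^2\om$, which is $\ep(k)$ because $\tau_a z^a e_i^{\otimes k}$ has negligible $L^2$-mass outside $W_i$ by (\ref{eqn2-2-2}) and $\|\beta_{i,a}\|=\ep(k)$. On each $V_j$, $j\neq i$, we have $\chi_i=0$ so $s_{i,a}=-\beta_{i,a}$; applying the Taylor-coefficient argument of Lemma \ref{sgamma} to $\beta_{i,a}$ on $U_j$ (whose $L^2$ norm is $\ep(k)$) yields the pointwise estimate $|\beta_{i,a}|^2/\rho_{k,0}=\ep(k)$ on $W_j\supset V_j$, and combined with Lemma \ref{Lem3-3} together with the crude bound $\int_X\om_k=O(k)$, this contributes $\ep(k)$.

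On $W_i$ the section simplifies to $s_{i,a}=\tau_a z^a e_i^{\otimes k}-\beta_{i,a}$ and expanding $|s_{i,a}|^2$ produces the leading term $|\tau_a z^a|_h^2$, a cross term, and $|\beta_{i,a}|^2$. Since (\ref{eqn3-5}) gives $|\beta_{i,a}|^2\le\ep(k)\rho_{k,0}$ pointwise on $W_i$ and Lemma \ref{Lem3-3} yields $\rho_k=(1+\ep(k))\rho_{k,0}$, the $|\beta_{i,a}|^2$ contribution is bounded by $\ep(k)\int_{W_i}\om_k\le\ep(k)\cdot O(k)=\ep(k)$, and the cross term is handled by Cauchy--Schwarz against the leading integral (which is $O(1)$, essentially the Section~2 quantity $\mu_a$). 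It remains to pass from $\rho_k^{-1}\om_k$ to $\rho_{k,0}^{-1}\om_{k,0}$ in the leading integral; writing
\[\rho_k^{-1}\om_k-\rho_{k,0}^{-1}\om_{k,0}=\ep(k)\rho_k^{-1}\om+\ep(k)\rho_{k,0}^{-1}\om_{k,0},\]
where the $\om$-piece comes from the preceding lemma $\om_k-\om_{k,0}=\ep(k)\om$, the resulting error contains the harmless factor $\int_{W_i}|\tau_a z^a|^2\rho_{k,0}^{-1}\om\le\int_{W_i}\om=2\pi k^{-3/8}$, using the tautological bound $|\tau_a z^a|^2/\rho_{k,0}\leq 1$.

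I expect the main obstacle to be controlling errors near the puncture, where $\rho_k^{-1}$ blows up and the multiplicative error bounds from Lemma \ref{Lem3-3} and its successor threaten to be amplified. The saving grace is twofold: the pointwise domination $|\tau_a z^a|^2\le\rho_{k,0}$ absorbs the singular weight, and the neck $W_i$ has $\om$-volume only $O(k^{-3/8})$. Together these let every error term be reduced to an absolutely convergent integral multiplied by $\ep(k)$, yielding the claimed identity.
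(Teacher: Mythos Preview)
Your proof is correct and follows essentially the same route as the paper: the same $W_i$ versus $X\setminus W_i$ split, the same use of mass concentration and $\|\beta_{i,a}\|=\epsilon(k)$ on the exterior, and the same replacement of $(\rho_k,\omega_k)$ by $(\rho_{k,0},\omega_{k,0})$ on $W_i$ via Lemma~\ref{Lem3-3}, the estimate~(\ref{eqn3-5}), and the preceding lemma $\omega_k-\omega_{k,0}=\epsilon(k)\omega$. If anything you are slightly more careful than the paper, explicitly handling the other neck regions $V_j$ ($j\neq i$) where the expansions (\ref{outside})--(\ref{outside2}) do not hold, a point the paper's proof glosses over.
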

\begin{proof}
We write 
$$\mu_X(\alpha, \alpha)=\int_{W_i} \frac{|s_{i, a}|^2}{\rho_{k}}\omega_k+\int_{X\setminus W_i}\frac{|s_{i, a}|^2}{\rho_k}\omega_k. $$
Since outside $W_i$ we have the expansion (\ref{outside}) and (\ref{outside2}), we get 
$$\int_{X\setminus W_i} \frac{|s_{i, a}|^2}{\rho_k}\omega_k=\int_{X\setminus W_i} |s_{i, a}|^2(1-\frac{S}{2}k^{-1}+O(k^{-2}))\omega. $$
Notice 
$$\int_{X\setminus W_i} |s_{i, a}|^2\omega\leq \int_{X}|\beta_{i, a}|^2\omega+\int_{U_i\setminus W_i} |\tau_az^a e_i^{\otimes k}|^2\omega=\epsilon(k)+\int_{U_i\setminus W_i} \frac{|\tau_a z^a|^2}{\sum_{d}|\tau_d z^d|^2}\rho_{k, 0}\omega.  $$
Now as in the proof of \textbf{Claim 4} in Lemma \ref{sgamma} one can easily see that $\frac{|\tau_a z^a|^2}{\sum_{d}|\tau_d z^d|^2}=\epsilon(k)$ for $a\leq 2k^{1/2}\log k$. On the other hand, outside $W_i$ we have the usual expansion of $\rho_{k, 0}$  as (\ref{usualexpansion}), hence $\rho_{k, 0}=O(k)$ on $U_i\setminus W_i$. So we have 
$$\int_{X\setminus W_i} |s_{i, a}|^2\omega=\epsilon(k). $$

Now by the above discussion we get
$$\int_{W_i}\frac{|s_{i, a}|^2}{\rho_k}\omega_k=\int_{W_i}|\tau_az^ae_i^{\otimes k}|^2(1+\epsilon(k))\rho_{k, 0}^{-1}\omega_{k,0}+\int_{W_i}\frac{|\beta_{i, a}|^2}{\rho_{k, 0}}(1+\epsilon(k))\omega. $$
The conclusion follows since the last term is $\epsilon(k)$ by similar arguments as in the proof of Lemma \ref{Lem3-3}.
\end{proof}

By the results of Section 2, we obtain
\begin{equation}\label{eqn3-14}
	\mu_X(\alpha, \alpha)=
	\left\{
	\begin{array}{ll}
	\frac{1}{2}+\epsilon(k),   & a=1;  \\
	 1+O(k^{-1}(\log k)^{60}) & a>1.
	\end{array}
	\right.
	\end{equation}

\

Similarly, for the off-diagonal term of $\mu_X$, we have 

\begin{lem}For $\alpha\neq \beta$, 
	\begin{equation}\label{eqn3-15}
	\mu_X(\alpha, \beta)=
	\left\{
	\begin{array}{ll}
	\epsilon(k),   & \alpha=(i, a), \beta=(j, b);  \\
	O(k^{-2}) & \text{otherwise}. 
	
	\end{array}
	\right.
	\end{equation}
\end{lem}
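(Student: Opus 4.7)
The plan is to use orthonormality of the basis to cancel the leading order term in the off-diagonal integrals, reducing everything to error contributions concentrated in the cusp neighborhoods, and then to treat the two cases by different mechanisms.

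First I would decompose
$$\mu_X(\alpha,\beta)=\int_{X\setminus\bigcup_i V_i}\langle s_\alpha,s_\beta\rangle_h\,\rho_k^{-1}\omega_k+\sum_i\int_{V_i}\langle s_\alpha,s_\beta\rangle_h\,\rho_k^{-1}\omega_k.$$
On the outside region the Bergman expansion (\ref{outside2}) gives $\rho_k^{-1}\omega_k=(1+k^{-1})\omega+O(k^{-2})\omega$. The $O(k^{-2})$ contribution is controlled by $\|s_\alpha\|\|s_\beta\|=1$ via Cauchy--Schwarz. The main term, together with the orthonormality identity $\int_X\langle s_\alpha,s_\beta\rangle_h\omega=0$, allows me to rewrite the outside integral as $-(1+k^{-1})\sum_i\int_{V_i}\langle s_\alpha,s_\beta\rangle_h\omega$. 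Thus I obtain
$$\mu_X(\alpha,\beta)=\sum_i\int_{V_i}\langle s_\alpha,s_\beta\rangle_h\bigl(\rho_k^{-1}\omega_k-(1+k^{-1})\omega\bigr)+O(k^{-2}),$$
and the problem is reduced to estimating integrals over the cusp regions.

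For the ``otherwise'' case, say $\alpha=\tilde\gamma$ and $\beta$ arbitrary, I apply Cauchy--Schwarz to each $V_i$-integral. By the corollary to Lemma \ref{Lem3-3} we have $\int_{V_i}|s_{\tilde\gamma}|^2\omega=\epsilon(k)$ and $\int_{V_i}|s_{\tilde\gamma}|^2\rho_k^{-1}\omega_k=\epsilon(k)$, while for any $s_\beta$ the analogous integrals are $O(1)$ (bounded uniformly by the outside Bergman asymptotics and $\|s_\beta\|=1$). This forces each $V_i$ contribution to be $\epsilon(k)$, so the entire expression is $O(k^{-2})$.

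For $\alpha=(i,a)$, $\beta=(j,b)$ the strategy is to exploit the structure of the peak sections. When $i\neq j$, on $V_i$ we have $s_{j,b}=-\beta_{j,b}$ since $\chi_j$ vanishes near $p_i$, and using Lemma \ref{Lem3-3} (more precisely the pointwise bound $\rho_{k,0}^{-1}|\beta_{j,b}|^2=\epsilon(k)$ on $W_i\supset V_i$) one obtains an $\epsilon(k)$ bound by Cauchy--Schwarz. The case $i=j$, $a\neq b$ is the main obstacle: here both peak sections are concentrated in overlapping annuli near $p_i$, so one cannot rely on disjoint support. Instead I would use the approximation $\rho_k^{-1}\omega_k=\rho_{k,0}^{-1}\omega_{k,0}+\epsilon(k)\,\omega$ on $V_i$ established in the previous lemma, together with the similar approximation of $\omega$ by the model weight, and expand $s_{i,a}=\chi_i\tau_az^ae_i^{\otimes k}-\beta_{i,a}$, $s_{i,b}=\chi_i\tau_bz^be_i^{\otimes k}-\beta_{i,b}$. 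The main contribution becomes a model integral of $z^a\bar z^b$ against the $S^1$-invariant measure $\rho_{k,0}^{-1}\omega_{k,0}-(1+k^{-1})\omega_0$, which vanishes identically by angular integration for $a\neq b$. The cross terms involving the $\beta_{i,a}$, $\beta_{i,b}$ corrections are each $\epsilon(k)$ by the $L^2$ bound $\|\beta_{i,a}\|=\epsilon(k)$ and the pointwise comparison of $\rho_k^{-1}\omega_k$ with $\omega$ on $V_i$. Combining these yields the $\epsilon(k)$ estimate, with the delicate point being to ensure that the $S^1$-symmetry of the model weight is preserved to sufficient accuracy after the approximations.
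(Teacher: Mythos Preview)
Your overall strategy is sound and largely mirrors the paper's: decompose into cusp and bulk regions, use orthonormality to cancel the leading bulk contribution, and invoke $S^1$-symmetry of the model weight to kill the main cusp term in the peak--peak case. Your treatment of the $\tilde\gamma$-case is essentially identical to the paper's.

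There is, however, a gap in the peak--peak case. Your uniform reduction
\[
\mu_X(\alpha,\beta)=\sum_l\int_{V_l}\langle s_\alpha,s_\beta\rangle_h\bigl(\rho_k^{-1}\omega_k-(1+k^{-1})\omega\bigr)+O(k^{-2})
\]
carries an irreducible $O(k^{-2})$ from the Bergman expansion on $X\setminus\bigcup_l V_l$, so even after you show the $V_l$-integrals are $\epsilon(k)$ you can only conclude $\mu_X(\alpha,\beta)=O(k^{-2})$, not the $\epsilon(k)$ asserted in the lemma. The obstruction is that for $a$ ranging up to $k^{3/4}$ the section $s_{i,a}$ concentrates near $\log\tfrac{1}{|z|^2}\sim k/a$, which for large $a$ lies well outside $V_i$; hence you cannot improve the bulk $O(k^{-2})$ by arguing that the sections are small there.

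The paper sidesteps this by switching, for the peak--peak case, to a decomposition at $U_i$ rather than $V_i$. Outside the support of $\chi_i$ one has $s_{i,a}=-\beta_{i,a}$ with $\|\beta_{i,a}\|=\epsilon(k)$ (and on the thin annulus where $0<\chi_i<1$ one uses $|\tau_a z^a e_i^{\otimes k}|=\epsilon(k)$), so the integral over $X\setminus U_i$ is $\epsilon(k)$ directly, with no appeal to the expansion. Inside $U_i$ one then runs your $S^1$-symmetry argument together with the model approximation $\rho_k^{-1}\omega_k=\rho_{k,0}^{-1}(\omega_{k,0}+\epsilon(k)\omega)$ and the pointwise bound $|\beta_{i,a}|^2\rho_{k,0}^{-1}=\epsilon(k)$ on $W_i$. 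For $i\neq j$ the paper proceeds analogously, using that each peak section is $\epsilon(k)$ small on the other's $U$-neighborhood.

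This gap is inessential for Theorem~\ref{thm1-1}: with at most $O(k^{3/2})$ peak--peak pairs, an $O(k^{-2})$ bound on each contributes $O(k^{-5/2})$ to $\|\mu\|_2^2$, which is absorbed by the dominant $O(k^{-3/2}(\log k)^{121})$ term. But to prove the lemma as stated you should adopt the $U_i$-splitting for the peak--peak case.
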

\begin{proof}
We need to check each case separately. If $\alpha=\tilde\gamma$, then for any other unit norm section $s$ which is $L^2$ orthogonal to $s_{\alpha}$, we can write 
$$\int_{X}\langle s_\alpha, s\rangle \rho_k^{-1}\omega_k=\int_{X\setminus V_i} \langle s_\alpha, s\rangle \rho_k^{-1}\omega_k+\int_{V_i}\langle s_\alpha, s\rangle \rho_k^{-1}\omega_k. $$
For the first term we have 
\begin{eqnarray*}
	\int_{X\setminus V_i} \langle s_\alpha, s\rangle \rho_k^{-1}\omega_k&=&\int_{X\setminus V_i} \langle s_\alpha, s\rangle (1-\frac{S}{2}k^{-1}+O(k^{-2}))\omega\\
	&=&-\int_{V_i}\langle s_\alpha, s\rangle (1-\frac{S}{2}k^{-1})\omega+O(k^{-2}).
\end{eqnarray*}
Using (\ref{eqn3-8}) we see this is $\epsilon(k)+O(k^{-2})=O(k^{-2})$. For the second term we similarly apply previous arguments to see it is $\epsilon(k)$. 

Now if $\alpha=(i, a)$ and $\beta=(i, b)$ with $a\neq b$, then
$$\int_{X}\langle s_\alpha, s_\beta\rangle \rho_k^{-1}\omega_k=\int_{X\setminus U_i}\langle s_\alpha, s_\beta\rangle (1-\frac{S}{2}k^{-1}+O(k^{-2}))\omega+\int_{U_i}\langle s_\alpha, s_\beta\rangle \rho_k^{-1}\omega_k. $$
Since $s_{i, a}=\chi_i \tau_az^a e_i^{\otimes k}+\beta_{i, a}$ with $\|\beta_{i, a}\|=\epsilon(k)$, we easily see the first term is $\epsilon(k)$. The second term is also $\epsilon(k)$ because $a\neq b$, $\rho_k^{-1}\omega_k=\rho_{k, 0}^{-1}(\omega_{k,0}+\epsilon(k)\omega)$,  $|\beta_{i, a}|^2\rho_{k, 0}^{-1}=\epsilon(k)$ in $W_i$, and $\int_{U_i\setminus W_i}|\beta_{i, a}|^2\omega=\epsilon(k)$. 

For $\alpha=(i, a)$ and $\beta=(j, b)$ with $i\neq j$ the conclusion follows similarly.

\end{proof}

Finally we have
\begin{equation}\label{eqn3-16}
\mu_D(\alpha, \alpha)=
\left\{
                                                               \begin{array}{ll}
1+\epsilon(k),   & \alpha=(i, 1);\\
\epsilon(k), & \text{otherwise.} \\

                                                               \end{array}
                                                             \right.
\end{equation}

Putting together (\ref{eqn3-9}), (\ref{eqn3-11}), (\ref{eqn3-14}), (\ref{eqn3-15}), (\ref{eqn3-16}), we get 

\begin{eqnarray*}
\|\mu_X+\frac{1}{2}\mu_D-\tilde c_k \|_2^2&=&O(k^2\cdot k^{-4}+O(k^{-2}(\log k)^{120}\cdot k^{1/2}\log k)\\&=&O(k^{-3/2}(\log k)^{121})
\end{eqnarray*}

This finishes the proof of Theorem \ref{thm1-1}.

\section{Explicit study of Chow stability}

We assume $X=\PP^1$ and $D$ the union of $d$ distinct points $p_1, \cdots, p_d$. Consider an embedding of $(X, D)$ into $\PP^N$ using $H^0(X, L^k)$ where $L=[D]$ and $N=kd$.

\begin{theorem}

Suppose $d\geq 2$, then $(X, D)$ is $\lambda$-semistable if and only if $\lambda\in [\lambda_k, 1]$, and $\lambda$-stable if and only if $\lambda\in (\lambda_k, 1]$. Here $\lambda_k=2/(d+1)$ when $k=1$ and $\lambda_k=\frac{2kd+2}{3kd+d+1}$ when $k\geq 2$. 
\end{theorem}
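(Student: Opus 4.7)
The plan is to apply the Hilbert-Mumford numerical criterion for $\lambda$-Chow (semi)stability of the embedded pair $(\Phi_k(X), \Phi_k(D)) \subset \PP^N$ with $N = kd$. By Kempf-Ness this reduces to controlling the sign of the $\lambda$-weighted HM weight
$$\mu_\lambda(\rho) := \lambda\,\mu_X(\rho) + (1-\lambda)\,\mu_D(\rho)$$
over all non-trivial one-parameter subgroups $\rho : \C^* \to SL(N+1;\C)$, where $\mu_X(\rho)$ is the Chow weight of $X$ and $\mu_D(\rho) = \sum_{i=1}^d \mu_{p_i}(\rho)$. Since $\mu_\lambda$ is affine in $\lambda$, once one verifies the correct inequality at the two values $\lambda = \lambda_k$ and $\lambda = 1$, it extends to all of $[\lambda_k, 1]$ by convex combination. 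The critical subgroup comes from the vanishing-order filtration $F_m := H^0(X, L^k(-mD))$: the graded pieces $V_m := F_m/F_{m+1}$ have dimension $d$ for $0 \leq m \leq k-1$ and dimension $1$ for $m = k$, and $\rho_0 \subset SL(N+1;\C)$ acts with weight $m$ on $V_m$ (renormalized so $\sum w_j = 0$). Geometrically $\rho_0$ is the test configuration realizing the deformation of $X \times \mathbb{A}^1$ to the normal cone of $D \times \{0\}$, whose central fibre is $X_0 = X \cup L_1 \cup \cdots \cup L_d$ with each $L_i \cong \PP^1$ attached to $X$ at $p_i$, and the strict transform of $D$ providing $D_0 = \{q_1,\dots,q_d\}$ with $q_i \in L_i$ opposite the node.

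The second step is a direct center-of-mass computation for $(X_0, D_0)\subset \PP^N$. The line bundle on $X_0$ restricts to $L^{k-1}$ on $X$, embedding $X$ as a rational normal curve of degree $(k-1)d$ in a $(k-1)d$-dimensional subspace, and to $\mathcal{O}_{L_i}(1)$ on each $L_i$, making each $L_i$ a line occupying one of the remaining $d$ directions of $\PP^N$. Component by component: each $L_i$ contributes $\frac{1}{2}$ to the two diagonal entries for its spanning directions (by $SU(2)$-symmetry of the FS embedding of $\PP^1$); the $X$-component contributes a balanced diagonal matrix of trace $(k-1)d$ when $k \geq 2$ (using that the rational normal curve is $SU(2)$-invariantly balanced), while for $k=1$ it collapses to a point giving a rank-one projection; and each $q_i \in D_0$ contributes the rank-one projection along the $L_i$-direction opposite $X$. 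Setting the trace-free part of the resulting $\lambda$-center of mass to zero yields a single linear equation in $\lambda$ with solutions $\lambda_1 = 2/(d+1)$ for $k=1$ and $\lambda_k = (2kd+2)/(3kd+d+1)$ for $k\geq 2$; the split into two cases reflects the fact that the $X$-component of $X_0$ is a point when $k=1$ and a rational normal curve when $k\geq 2$. Since $(X_0, D_0)$ lies in the closure of the $SL(N+1;\C)$-orbit of the original pair under $\rho_0$, Kempf-Ness yields $\lambda_k$-semistability, and combined with the manifest $\lambda=1$-semistability of the $SU(2)$-symmetric embedding, convexity propagates semistability over all of $[\lambda_k,1]$.

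For the stability range $(\lambda_k, 1]$ one wants strict inequality $\mu_\lambda(\rho) < 0$ for every non-trivial $\rho$. This follows because any $\rho$ achieving $\mu_X(\rho)=0$ must lie in the stabilizer of $X$, namely $SL(2) \hookrightarrow SL(N+1;\C)$ via $\mathrm{Sym}^N$, and such a $\rho$ moves $D$ non-trivially whenever $d \geq 2$, giving a strict contribution to $\mu_D$. Conversely, for $\lambda < \lambda_k$ the subgroup $\rho_0$ itself destabilizes. The main obstacle in carrying out this plan is twofold: first, the component-wise center-of-mass computation for $(X_0, D_0)$ requires careful bookkeeping of how the $d$ attachment directions are identified with $V_0$ inside the full decomposition of $\PP^N$; second, one must verify that $\rho_0$ really is the worst one-parameter subgroup for $\lambda<\lambda_k$, which is handled by $PGL(2,\C)$-symmetrization reducing general $\rho$ to subgroups diagonal in a filtration-adapted basis, followed by a convexity/rearrangement argument on weight vectors isolating $\rho_0$ as extremal.
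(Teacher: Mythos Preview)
Your overall strategy---degenerate $(X,D)$ to the normal-cone configuration $(X_0,D_0)$ and verify that the latter is $\lambda_k$-balanced---coincides with the paper's, but your center-of-mass computation contains a genuine gap for $k\ge 2$. You assert simultaneously that the $X$-component $Y$ (the degree-$(k-1)d$ rational normal curve) contributes a scalar matrix by $SU(2)$-invariance, and that each line $L_i$ contributes $\tfrac12$ to exactly two diagonal entries. These two claims are incompatible. One endpoint of $L_i$ is the attachment point $p_i\in Y$, and in the $SU(2)$-balanced embedding of $Y$ this point is \emph{not} a coordinate direction; the contribution of $L_i$ to the $Y$-block of the moment matrix is therefore the rank-one piece $\tfrac12\,p_ip_i^{*}$, not a single diagonal entry. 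Summing over $i$ gives $\tfrac12 M_E$ on the $Y$-block, a matrix of rank at most $d<(k-1)d+1$, which is never a scalar. With an $SU(2)$-balanced $Y$ the total matrix is thus not $\lambda$-balanced for any $\lambda$, and your equation for $\lambda_k$ cannot be derived this way.

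What the paper does instead is the inductive step you are missing: rather than $1$-balancedness of $Y$ alone, it uses $\tfrac23$-balancedness of the \emph{pair} $(Y,E)$, where $E=\{p_1,\dots,p_d\}$ are the attachment points. The induction hypothesis gives $\lambda_{k-1}<\tfrac23$ for $d\ge 3$, so $(Y,E)$---which is exactly $(X,D)$ embedded by $L^{k-1}$---admits a $\tfrac23$-balanced representative. In that embedding the $Y$-block of the $\lambda$-center of mass is
\[
\lambda M_Y+\tfrac{\lambda}{2}M_E \;=\; \tfrac{3\lambda}{2}\Bigl(\tfrac23 M_Y+\tfrac13 M_E\Bigr),
\]
which is a scalar precisely by the $\tfrac23$-balanced condition; the ratio $\tfrac23:\tfrac13$ is forced by the factor $\tfrac12$ coming from the lines and is the heart of the argument. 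Separately, the paper bypasses your proposed ``$\rho_0$ is the worst one-parameter subgroup'' verification entirely, citing a general convexity fact that the semistable $\lambda$'s form an interval $[\lambda_k,1]$; once $(X_0,D_0)$ is exhibited as $\lambda_k$-balanced in the orbit closure the endpoint is pinned down. Your symmetrization/rearrangement sketch for that step is plausible but would need considerably more work to be made rigorous.
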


\begin{proof}
Clearly $(X, D)$ is always $1$-balanced. A simple fact is that (c.f. \cite{ssun}) the set of $\lambda$ for which $(X, D)$ is $\lambda$-semistable form an interval of the form $[\lambda_k, 1]$ for some $\lambda_k\in (0, 1)$. The point is to determine $\lambda_k$. The pair $(X, D)$ is not $\lambda_k$-balanced but there is another pair $(X_0, D_0)$ in the closure of the $SL(N+1;\C)$ orbit of $(X, D)$ (in an appropriate Chow variety) which is $\lambda_k$-balanced. When $d=2$ this is proved in \cite{ssun} where $X_0$ is constructed as a chain of linear rational curves in $\PP^N$. Now we focus on the case $d\geq 3$. We will construct these by induction. When $k=1$, we let $q_i$ be the $i$-th coordinate point of $\PP^d$ for $i=1, \cdots, d+1$. Then we let $D_0=\{q_1, \cdots, q_d\}$, and $X_0$ be the union of all lines connecting $q_i$ with $q_{d+1}$.  A straightforward calculation shows that $(X_0, D_0)$ is $\lambda_1$-balanced, and it is also easy to see that $(X_0, D_0)$ is in the $SL(N+1;\C)$ orbit of $(X, D)$. This shows that $(X, D)$ is strictly $\lambda_1$-semistable and hence we are done with $k=1$. Now suppose the conclusion holds for $k=m-1$ and we consider the case $k=m$.  Then again we denote by $q_i$ the $i$-th coordinate point of $\PP^N$ for $i=1, \cdots, N+1$. Let $X_0$ be the union of a smooth rational normal curve $Y$ in $\PP^{N-d}$ (viewed naturally as the a subspace of $\PP^N$ which contains $q_1, \cdots, q_{N-d+1}$) which passes through the co-ordinate points, and the lines connecting the $q_i$ with $q_{N-d+1+i}$. Let $D_0=\{q_{N-d+1+j}|j=1, \cdots, d\}$, and $E=\{q_1, \cdots, q_d\}$. This is in the closure of the $SL(N+1;\C)$ orbit of $(X, D)$ (this can be alternatively seen as a deformation to the normal cone). Now notice since $d\geq 3$ we have $\lambda_{m-1}<2/3$, by the induction hypotheses we may assume $(Y, E)$ is $2/3$-balanced in $\PP^{N-d}$. Then it is again a straightforward calculation that $(X_0, D_0)$ is $\lambda_m$-balanced in $\PP^N$. More precisely, we obtain this value of $\lambda_m$ by solving the equation
$\frac{1}{2}\lambda+(1-\lambda)=\lambda \frac{2N+d}{2(N+1)}$. By the same reason as the case $k=1$ we see the conclusion holds for $k=m$.  
\end{proof}

\begin{figure}
 \begin{center}
  \includegraphics[width=0.8 \columnwidth]{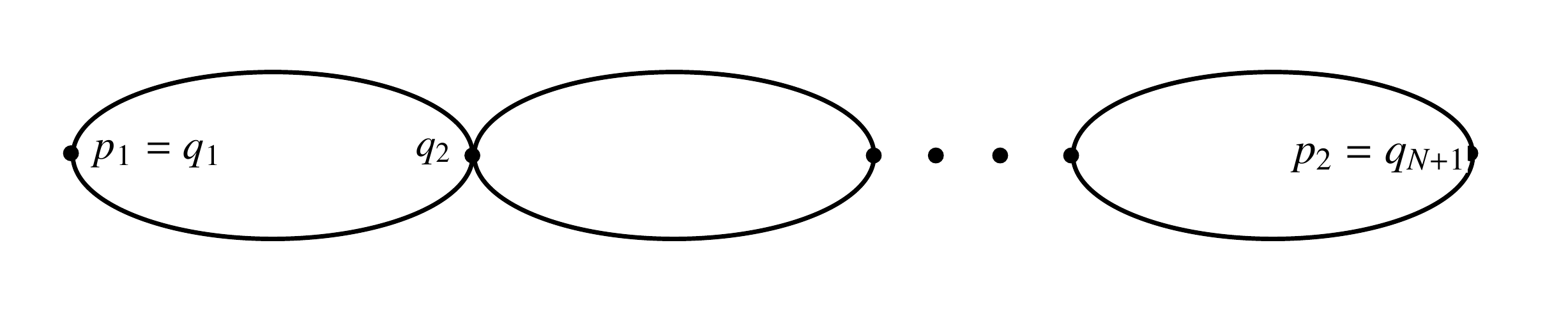}
  \caption{A $\frac{2}{3}$-balanced pair in $\P^N$ with $D=\{p_1, p_2\}$}
  \label{fig1}
 \end{center}
 \end{figure}

\begin{figure}
 \begin{center}
  \includegraphics[width=0.8 \columnwidth]{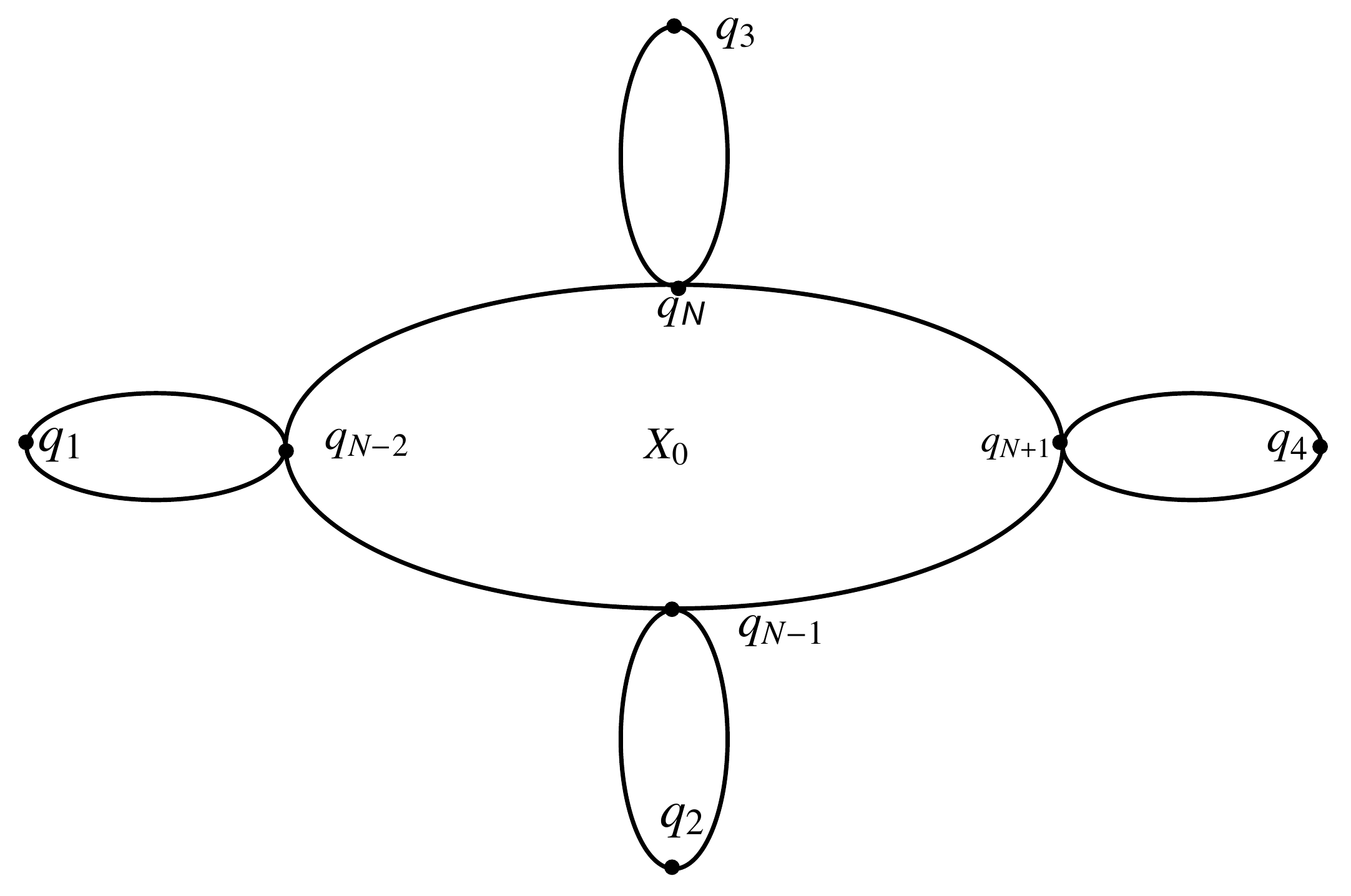}
  \caption{A $\lambda_k$-balanced pair in $\P^N$ with $d=4$ and $k\geq 2$}
  \label{fig2}
 \end{center}
 \end{figure}

\bibliographystyle{plain}

\bibliography{references}

\end{document}